\theoremstyle{plain}
\newtheorem{theorem} {Theorem}[section]
\newtheorem{lemma}[theorem] {Lemma}
\newtheorem{proposition}[theorem] {Proposition}
\theoremstyle{definition}
\newtheorem{definition}[theorem] {Definition}
\newtheorem{example} [theorem]{Example}
\theoremstyle{remark}
\newtheorem{remark}[theorem] {Remark}
\numberwithin{equation}{section}
\newcommand{\R}{{\mathbb R}}
\newcommand{\Z}{{\mathbb Z}}
\newcommand{\N}{{\mathbb N}}
\newcommand{\NN}{{\mathcal N}}
\newcommand{\PP}{{\mathcal P}}
\newcommand{\T}{{\mathbb T}}
\newcommand{\CC}{{\mathbb C}}
\newcommand{\TT}{{\mathcal T}}
\newcommand{\BB}{{\mathfrak B}}
\newcommand{\al}{{\alpha}}
\newcommand{\la}{{\lambda}}
\newcommand{\sa}{{\sigma}}
\newcommand{\iy}{{\infty}}
\newcommand{\vphi}{{\varphi}}
\newcommand{\vep}{{\varepsilon}}
\newcommand{\g}{{\gamma}}
\newcommand{\de}{{\delta}}
\newcommand{\be}{{\beta}}
\newcommand{\bna}{\begin{eqnarray}}
\newcommand{\ena}{\end{eqnarray}}
\newcommand{\ba}{\begin{eqnarray*}}
\newcommand{\ea}{\end{eqnarray*}}
\newcommand{\beq}{\begin{equation}}
\newcommand{\eeq}{\end{equation}}
\DeclareMathOperator*{\esssup}{ess\,sup}
\begin{document}

\title[Constants in Multivariate Inequalities]
{Sharp Constants of Approximation Theory. VI.
Weighted Polynomial Inequalities
 of Different Metrics on the Multidimensional Cube and Ball }
\author{Michael I. Ganzburg}
 \address{212 Woodburn Drive, Hampton,
 VA 23664\\USA}
 \email{michael.ganzburg@gmail.com}
 \keywords{Sharp constants, multivariate Nikolskii-type
  inequality, algebraic polynomials,
  Newton polyhedra,
 entire functions of exponential type.}
 \subjclass[2010]{Primary 41A17, 41A63, Secondary 26D10}

 \begin{abstract}
 We prove
limit equalities between the sharp constants in weighted Nikolskii-type
inequalities
 for multivariate polynomials on an $m$-dimensional cube and ball
  and the corresponding constants for
 entire functions of exponential type.
 \end{abstract}
 \maketitle

 \section{Introduction}\label{S1}
\setcounter{equation}{0}
\noindent
We continue the study of the sharp constants in multivariate
 inequalities
of approximation theory
that began in \cite{G2019}--\cite{G2020}.
In this paper we prove
asymptotic equalities
between the sharp constants in the
multivariate weighted
 Nikolskii-type inequalities
  for entire functions of exponential type and
algebraic polynomials on  an $m$-dimensional cube and ball.
\vspace{.12in}\\
\textbf{Notation.}
Let $\R^m$ be the Euclidean $m$-dimensional space with elements
$x=(x_1,\ldots,x_m),\, y=(y_1,\ldots,y_m),
\,t=(t_1,\ldots,t_m),\,s=(s_1,\ldots,s_m),\,
v=(v_1,\ldots,v_m)$,
the inner product $(t,x):=\sum_{j=1}^mt_jx_j$,
and the norm $\vert x\vert:=\sqrt{(x,x)}$.
Next, $\CC^m:=\R^m+i\R^m$ is the $m$-dimensional complex
space with elements
$z=(z_1,\ldots, z_m)=x+iy$
and the norm $\vert z\vert:=\sqrt{\vert x\vert^2+\vert y\vert^2}$;
$\Z^m$ denotes the set of all integral lattice points in $\R^m$;
$\Z^m_+$ is a subset of $\Z^m$
of all points with nonnegative coordinates; and
$\N:=\{1,\,2,\ldots\}$.
We also use multi-indices $k=(k_1,\ldots,k_m)\in \Z^m_+$
with
$
 \langle k\rangle:=\sum_{j=1}^m k_j$ and
 $x^k:=x_1^{k_1}\cdot\cdot\cdot x_m^{k_m}.
 $

Given $\sa=(\sa_1,\ldots,\sa_m)\in\R^m\setminus\{0\}$, let
$\Pi^m(\sa):=\{t\in\R^m: \left\vert t_j\right\vert
\le \left\vert\sa_j\right\vert,
 1\le j\le m\}$
 be the $l$-dimensional parallelepiped in $\R^m$,
where $l\ge 1$ is the number of nonzero coordinates of
$\sa$.
Given $M>0$, let
$Q^m(M):=\{t\in\R^m: \left\vert t_j\right\vert\le M, 1\le j\le m\},\,
\BB^m(M):=\{t\in\R^m: \left\vert t\right\vert\le M\},\,
O^m(M):=\{t\in\R^m: \sum_{j=1}^m\left\vert t_j\right\vert\le M\}$, and
$S^{m-1}:=\{t\in\R^m:\vert t\vert=1\}$ be
the $m$-dimensional
 cube, ball,
octahedron, and the $(m-1)$-dimensional unit sphere in $\R^m$,
 respectively.
Next, let $Q^m:=Q^m(1)$ and $\BB^m:=\BB^m(1)$.
In addition, $\vert \Omega\vert_l$ denotes the $l$-dimensional
 Lebesgue measure
of a  measurable set $\Omega\subseteq\R^m,\,1\le l\le m$.
We set $S^0:=\{-1,1\}$ and $\left\vert S^0\right\vert_0:=2$.
We also use the floor function
 $\lfloor a \rfloor$, the gamma function $\Gamma(z)$,
 and the beta function $B(z,w)$.

Let $W:\Omega\to [0,\iy]$ be a locally integrable weight
 on a measurable
 subset $\Omega$ of $\R^m$, and let
 $L_{r,W}(\Omega)$ be a weighted space of all measurable
  complex-valued
  functions $F:\Omega\to \CC$
  with the finite quasinorm
 \ba
 \|F\|_{L_{r,W}(\Omega)}=
 \|F\|_{L_{r,W(x)}(\Omega)}:=\left\{\begin{array}{ll}
 \left(\int_\Omega\vert F(x)\vert^r W(x) dx\right)^{1/r}, & 0<r<\iy,\\
 \esssup_{x\in \Omega} \vert F(x)\vert, &r=\iy.
 \end{array}\right.
 \ea
 In the nonweighted case ($W =1$) and also in the case of $r=\iy$, we set
\ba
&&\|\cdot \|_{L_r (\Omega)} : =\|\cdot \|_{L_{r,1} (\Omega)} ,
\quad  L_r (\Omega) := L_{r,1}
(\Omega),\qquad 0<r< \infty ,\\
&&\|\cdot\|_{L_{\iy}(\Omega)}:= \|\cdot\|_{L_{\iy,W}(\Omega)},\qquad
  L_\iy (\Omega) := L_{\iy,W}(\Omega).
\ea
The quasinorm $\|\cdot\|_{L_{r,W}(\Omega)}$
allows the following "triangle" inequality:
\beq\label{E1.1}
\left\| F+G\right\|^{\tilde{r}}_{L_{r,W}(\Omega)}
 \le  \left\|F\right\|^{\tilde{r}}_{L_{r,W}(\Omega)}
 +\left\|G\right\|^{\tilde{r}}_{L_{r,W}(\Omega)},
 \qquad F\in L_{r,W}(\Omega),\quad G\in L_{r,W}(\Omega),
 \eeq
 where $\tilde{r}:=\min\{1,r\}$ for $r\in(0,\iy]$.

 In this paper we will need certain definitions and properties of
 convex bodies in $\R^m$.
Throughout the paper $V$ is a centrally symmetric (with respect to the origin)
 closed
 convex body in $\R^m$ and
 $V^*:=\{y\in\R^m: \forall\, t\in V, \vert (t,y)\vert \le 1\}$
 is the \emph{polar} of $V$.
 It is well known that $V^*$ is a centrally symmetric (with respect to the origin)
 closed
 convex body in $\R^m$ and $V^{**} =V$ (see, e.g., \cite[Sect. 14]{R1970}).
 The set $V$ generates the dual norm
 on $\CC^m$ by
 $
  \|z\|_V^*:=\sup_{t\in V}\left\vert\sum_{j=1}^m t_jz_j\right\vert,\,z\in\CC^m.
 $

 \begin{definition}\label{D1.0}
 A body $V\subset \R^m$ satisfies
 the \emph{parallelepiped condition ($\Pi$-condition)} if
 for every vector $t\in V\setminus\{0\}$
 the parallepiped $\Pi^m(t)$ is a subset of $V$.
 \end{definition}
 \noindent
 It is easy to verify that $V$ satisfies the $\Pi$-condition if and only if
 $V$ is symmetric about all coordinate hyperplanes, that is,
 for every $t\in V$ the vectors $(\pm\vert t_1\vert,\ldots,
 \pm\vert t_m\vert)$
 belong to $V$.
 A slightly different version of the $\Pi$-condition,
 which is equivalent to Definition \ref{D1.0},
 was introduced in \cite[Sect. 1]{G2020a}.
   In particular, given $\la\in [1,\iy]$ and
   $\sa\in\R^m,\,\sa_j>0,\,1\le j\le m$,
   the set $V_{\la,\sa}:=\left\{t\in\R^m:
 \left(\sum_{j=1}^m\left\vert t_j/\sa_j\right\vert^{\la}
 \right)^{1/\la}\le 1\right\}$,
 satisfies the $\Pi$-condition.
 Therefore, the sets $\Pi^m(\sa)$ (for $\la=\iy$),
 $Q^m(M)$ (for $\la=\iy$ and $\sa=(M,\ldots,M)$),
 $\BB^m(M)$ (for $\la=2$ and $\sa=(M,\ldots,M)$), and
 $O^m(M)$ (for $\la=1$ and $\sa=(M,\ldots,M)$)
 satisfy the $\Pi$-condition as well.

 Given $a\ge 0$, the set of all trigonometric polynomials
 $T(x)=\sum_{\eta\in aV\cap \Z^m}c_\eta\exp[i(\eta,x)]$
  with complex
 coefficients is denoted by $\TT_{aV}$.
 In the univariate case we
  use the notation $\TT_n:=\TT_{[-a,a]}=\TT_{[-n,n]}$,
   where $n=\lfloor a\rfloor,\,n\in\Z^1_+$.

  \begin{definition}\label{D1.1}
 We say that an entire function $f:\CC^m\to \CC^1$ has exponential type $V$
 if for any $\vep>0$ there exists a constant $C_0(\vep,f)>0$ such that
 for all $z\in \CC^m$,
 $\vert f(z)\vert\le C_0(\vep,f)\exp\left((1+\vep)\|z\|_V^*\right)$.
 \end{definition}
 \noindent
   The set of all entire function of exponential type $V$ is denoted
  by $B_V$, and the set of even functions in each variable from $B_V$
  is denoted by $B_{V,e}$.
  In the univariate case we use the notation
  $B_\la:=B_{[-\la,\la]}$ and $B_{\la,e}:=B_{[-\la,\la],e},\,\la>0$.
  In addition, note that if $V$ satisfies the $\Pi$-condition and
  $f\in B_V$, then the function $f(\pm z_1,\ldots,\pm z_m)$ belongs
   to $B_V$ as well since $\left\|(\pm z_1,\ldots,\pm z_m)\right\|^*_V
   =\left\|z\right\|^*_V$ by the definition of $\left\|z\right\|^*_V,\,
   z\in \CC^m$.

  Throughout the paper, if no confusion may occur,
  the same notation is applied to
  $f\in B_V$ and its restriction to $\R^m$ (e.g., in the form
  $f\in  B_V\cap L_{p,W}(\R^m))$.
  The class $B_V$ was defined by Stein and Weiss
  \cite[Sect. 3.4]{SW1971}. For $V=\Pi^m(\sa),\,V=Q^m(M),$ and
  $V=\BB^m(M)$, similar
  classes were
  defined by Bernstein \cite{B1948} and Nikolskii
   \cite[Sects. 3.1, 3.2.6]{N1969}, see also
  \cite[Definition 5.1]{DP2010}. In particular, $B_{\BB^m(M)}$ is
  the set of all entire functions of spherical type $M$
  (see \cite[Sect. 3.2.6]{N1969}).
  Certain properties of functions from $B_V$ are presented
   in Lemma \ref{L3.2}.

  Given a convex  bounded set $\Omega\subset\R^m$,
  let  $\PP_{\Omega}$ be the set of all
  polynomials $P(x)=\sum_{k \in \Omega\cap \Z_+^m}c_k x^k$
   in $m$ variables
   with
  complex coefficients
  whose Newton polyhedra
  (see, e.g., \cite[Sect. 3]{AVGK1984} for the definition)
   are subsets of $\Omega$.
  In this paper we use the set $\PP_{aV}$ for
  a given $a\ge 0$ and the set
  $\PP_{aV,e}$ of even polynomials in each variable
  from $\PP_{aV}$.
  In the case of $V=O^m(1),\,\PP_{nV}=\PP_{O^m(n)}$
  coincides with the set
  $\PP_{n,m}$
   of all polynomials
   $P(x)=\sum_{\langle k\rangle\le n}c_k x^k$
   in $m$ variables of total degree at most $n,\,n\in\Z^1_+$.
   In the univariate case we
  use the notation $\PP_n:=\PP_{[-a,a]}=\PP_{n,1}$, where
  $n=\lfloor a\rfloor,\,n\in\Z^1_+$.
   Newton polyhedra and polynomial classes $\PP_{\Omega}$
    associated with
Newton polyhedra play an important role in algebra, geometry,
 and analysis
(see, e.g., a survey \cite[Sect. 3]{AVGK1984}).
Note that if $V$ satisfies the $\Pi$-condition, then
the Newton polyhedron of a polynomial from $\PP_{nV}$
is a downward closed set (see, e.g., \cite[Sect. 2]{M2015}).
  It is easy to verify that if $V_1\subseteq V_2$, then
  $B_{V_1}\subseteq B_{V_2}$ and $\PP_{aV_1}\subseteq\PP_{aV_2}$.

  Throughout the paper $A_1,\,A_2,\,C,\,C_1,\ldots,C_{19}$
  denote positive constants independent
of essential parameters.
 Occasionally we indicate dependence on certain parameters.
 The same symbol $C$ does not
 necessarily denote the same constant in different occurrences,
 while $A_1,\,A_2$, and $C_l,\,1\le l\le 19$,
 denote the same constants in different occurrences.
  \vspace{.12in}\\
\textbf{Inequalities of Different Metrics.}
 We first define  sharp constants in Nikolskii-type
inequalities for algebraic and trigonometric polynomials
and  entire functions of exponential type
and then briefly discuss their asymptotic behaviours.

Let
$x_0\in \Omega$ be a fixed point
and let $B$ be a subspace of $L_{p,W}(\Omega)$,
where $\Omega$ is a closed subset of $\R^m$.
In the capacity of $x_0$ we shall use either the origin $0$ or
boundary points, and
in the capacity of $B$ we shall use sets of
algebraic and trigonometric polynomials
and entire functions of exponential type.
 Let us define two sharp constants of different metrics
\bna
&&\NN_{x_0}\left(B,L_{p,W}(\Omega)\right)
:=\sup_{h\in B\setminus\{0\}}\frac{\vert h(x_0)\vert}
{\|h\|_{L_{p,W}(\Omega)}},\label{E1.2}\\
&&\NN\left(B,L_{p,W}(\Omega)\right)
:=\sup_{h\in B\setminus\{0\}}\frac{\|h\|_{L_{\iy}(\Omega)}}
{\|h\|_{L_{p,W}(\Omega)}}.\label{E1.3}
\ena
These constants usually coincide
for all $x_0\in\Omega$ in case of invariant subspaces $B,\,W=1$,
and homogeneous spaces $\Omega$ like $\R^m$,
the torus $\T^m$, and the sphere $S^{m-1}$.
However, it is not the case in other situations,
in particular, for algebraic polynomials.
Our major goal here is to find the asymptotic behaviour
of $\NN\left(B,L_{p,W}(\Omega)\right)$ in certain cases of
multivariate polynomial subspaces $B$, sets $\Omega$, and
Gegenbauer-type weights $W$.

The following limit relations for multivariate trigonometric polynomials
\bna\label{E1.5}
&&\lim_{a\to\iy}a^{-m/p}\NN\left(\TT_{aV},L_{p}(Q^m(\pi))\right)
=\lim_{a\to\iy}a^{-m/p}\NN_0\left(\TT_{aV},L_{p}(Q^m(\pi))\right)\nonumber\\
&&=\NN\left(B_{V}\cap L_{p}(\R^m),L_{p}(\R^m)\right)
=\NN_0\left(B_{V}\cap L_{p}(\R^m),L_{p}(\R^m)\right),
\qquad p\in(0,\iy),
\ena
were proved by the author \cite[Theorem 1.3]{G2018}.
In the univariate case of $V=[-1,1]$ and $a\in\N$,
 \eqref{E1.5}  were proved by the author and Tikhonov \cite{GT2017}.
In  earlier publications \cite{LL2015a, LL2015b}, Levin and Lubinsky established
versions of \eqref{E1.5} on the unit circle.
Quantitative estimates of the remainder in asymptotic equalities of the
Levin-Lubinsky type were found by Gorbachev and Martyanov
\cite[Corollary 1]{GM2020}.
Certain extensions of the Levin-Lubinsky's results to the $m$-dimensional
unit sphere in $\R^{m+1}$ were recently proved by Dai, Gorbachev, and Tikhonov
\cite{DGT2018}
(see also \cite[Corollary 4.5]{G2019}).

The classic inequalities of different metrics for algebraic polynomials
\beq\label{E1.6}
\| P\|_{L_\iy(\Omega)}\le C\,a^\mu \|P\|_{L_p(\Omega)}, \qquad P\in \PP_{aV},
\quad p\in[1,\iy),
\eeq
where $\Omega\subset\R^m$ is a bounded closed domain
and $C$ is independent of $P$ and $a$,
have been studied since the 1960s.
The exponent $\mu$ in \eqref{E1.6} essentially depends on $\Omega$.
For example, inequality in \eqref{E1.6}
 holds true for $\mu=2m/p$ and any domain $\Omega$,
 satisfying the cone condition, in particular for
 convex bodies (not necessarily symmetric),
 see  Daugavet \cite[Theorem 1]{D1972}, \cite[Theorem 2]{D1976}
 and the author \cite[Theorem 2]{G1978}. It is also valid for
 $\mu=(m+1)/p$ and any domain $\Omega$ with the smooth boundary,
 see Daugavet \cite[Theorem 2]{D1972}, \cite[Theorem 5]{D1976}
 and Kro\'{o} and Schmidt \cite[p. 433]{KS1997}. In case of the
  cube $\Omega=Q^m$ for $\mu=2m/p$  and the unit ball
   $\Omega=\BB^m$
  for $\mu=(m+1)/p$, the factor $a^\mu$ in \eqref{E1.6} cannot be replaced
  by a smaller one as $a\to\iy$ (see \cite[Theorem 7]{D1976}).
  More examples and discussions are
  presented by Ditzian and Prymak \cite{DP2010, DP2016}.
  In addition, note that weighted versions of \eqref{E1.6}
  with Gegenbauer-type weights were obtained in
  \cite[Theorems 1 and 2]{D1972}, \cite[Theorems 2 and 5]{D1976}
  and with $k$-concave weights in \cite[Theorem 2.3]{G2016}.

  If $\| P\|_{L_\iy(\Omega)}$ in \eqref{E1.6} is replaced
  by $\vert P(x_0)\vert,\,
  x_0\in\Omega$, then
  unlike Nikolskii-type inequalities for trigonometric polynomials
   (cf. \eqref{E1.5}),
  the exponent $\mu$ in \eqref{E1.6} also depends on the location of $x_0$.
  In particular, if $x_0$ is an interior point of $V$, then $\mu=m/p$
  (see \cite[Lemma 2.9]{G2020a}).
  The asymptotic behaviours of the corresponding sharp constants
   for $x_0=0$ were found by the author in the following forms:
  \bna
  &&\lim_{a\to\iy}a^{-m/p}\NN_0\left(\PP_{aV},L_{p}(Q^m)\right)
  =\NN_0\left(B_V\cap L_{p}(\R^m),L_{p}(\R^m)\right),
  \qquad p\in(0,\iy),\label{E1.6a}\\
  &&\lim_{n\to\iy}n^{-m/p}\NN_0\left(\PP_{n,m},L_{p}(V^*)\right)
  =\NN_0\left(B_V\cap L_{p}(\R^m),L_{p}(\R^m)\right),
  \qquad p\in(0,\iy).\label{E1.6b}
  \ena
Relation  \eqref{E1.6a}
for $V$, satisfying the $\Pi$-condition,
 was proved in \cite[Theorem 1.2]{G2020a}, and
\eqref{E1.6b} was established in \cite[Theorem 1.2]{G2019b}
(see also \cite[Theorem 1.1]{G2017} for $m=1$ and
\cite[Corollary 4.4]{G2019} for $V=\BB^m$ and $p\in[1,\iy)$).

The exact value of the sharp constant
$\NN\left(\PP_{n},
  L_{p,W}([-1,1])\right)$ is known in several cases.
  Geronimus \cite[Theorem 1]{Ge1938} found
 $\NN\left(\PP_{n},
  L_{1,(1-u^2)^{-1/2}}([-1,1])\right)$.
   Simonov and Glazyrina \cite[Theorem 1]{SG2015}
   generalized this result by using and developing the
   Geronimus method.
   Note that the constants found in  \cite{Ge1938}
   and \cite{SG2015} are not explicit, and the problem
   of finding their asymptotic behaviour as $n\to\iy$
   is still open.
For certain weights, the sharp constant
$\NN\left(\PP_{n},
  L_{2,W}([-1,1])\right)$
  can be found by using extremal properties of orthonormal
  polynomials.
Using this approach, Lupas \cite{L1974}
(see also \cite[Theorem 6.1.8.2]{MMR1994})
 found $\NN\left(\PP_{n},
  L_{2,W}([-1,1])\right)$ for the Jacobi weight.
  For the Gegenbauer weight, his result can be reduced
  to the following formula ($\la\ge 0$):
  \bna\label{E1.6ba}
  \NN\left(\PP_{n},
  L_{2,(1-u^2)^{\la-1/2}}([-1,1])\right)
  &=&\left(\frac{(2\la+2n+1)\Gamma(2\la+n+1)}
  {2^{2\la} (2\la+1)\Gamma^2(\la+1/2) n!}\right)^{1/2}
  \ena
  (see \cite{L1974} and \cite[Eq. 6.1.8.8]{MMR1994}).
  Two special cases of \eqref{E1.6ba} are well-known.
  The first of them,
  $\NN\left(\PP_{n},
  L_{2}([-1,1])\right)=2^{-1/2}(n+1)$,
  has been known since the 1920s
  (see Polya and
  Szeg\"{o} \cite[Problem 6.103]{PS1998} and
  Labelle \cite{L1969}).
  The second one is
  \ba
  \NN\left(\PP_{n},
  L_{2,(1-u^2)^{-1/2}}([-1,1])\right)
  =2^{1/2}\NN\left(\TT_{n},
  L_{2}([-\pi,\pi])\right)
  =((2n+1)/\pi)^{1/2}
  \ea
  (see \cite[Sect. 4.9.2]{T1963}).

  In addition, the following univariate
  relations are known
  ($p\in [1,\iy),\,\la\ge 0$):
  \bna\label{E1.6c}
  &&\lim_{n\to\iy}
  n^{-(2\la+1)/p}\NN\left(\PP_{n},
  L_{p,(1-u^2)^{\la-1/2}}([-1,1])\right)
  \nonumber\\
  &&=\lim_{n\to\iy}
  n^{-(2\la+1)/p}\NN_1\left(\PP_{n},
  L_{p,(1-u^2)^{\la-1/2}}([-1,1])\right)
  \nonumber\\
  &&=2^{1/p}\NN_0\left(B_{1,e}\cap L_{p,\vert u\vert^{2\la}}(\R^1),
  L_{p,\vert u\vert^{2\la}}(\R^1)\right)\nonumber\\
  &&=2^{1/p}\NN_0\left(B_{1}\cap L_{p,\vert u\vert^{2\la}}(\R^1),
  L_{p,\vert u\vert^{2\la}}(\R^1)\right).
  \ena
  In addition to \eqref{E1.6c}, there exists a function
 $g_{0}\in  \left(B_{1,e}\cap L_{p,\vert u
 \vert^{2\la}}(\R^1)\right)\setminus\{0\}, p\in[1,\iy),$ such that
 \beq \label{E1.6ca}
  \lim_{n\to\iy}n^{-\left(2\la+1\right)/p}\NN\left(\PP_{n},
  L_{p,\left(1-u^2\right)^{\la-1/2}}\left([-1,1]\right)\right)
= 2^{1/p} \vert g_{0}(0)\vert/\|g_{0}
\|_{ L_{p,\vert u\vert^{2\la}}(\R^1)}.
 \eeq
  The first relation in \eqref{E1.6c} immediately follows
  from the equality
  \beq\label{E1.6d}
  \NN\left(\PP_{n},L_{p,(1-u^2)^{\la-1/2}}([-1,1])\right)
  =\NN_1\left(\PP_{n},L_{p,(1-u^2)^{\la-1/2}}([-1,1])\right)
  \eeq
  proved by Arestov and Deikalova \cite[Theorem 1]{AD2015}
   (see also Example \ref{Ex2.5} of Section \ref{S2n}),
  while the second one in \eqref{E1.6c} and also \eqref{E1.6ca} were
   proved in \cite[Theorem 4.3]{G2019}.
  The third equality in \eqref{E1.6c} immediately follows from the
  following well-known symmetrization:
  if $f\in B_{1}\cap L_{p,\vert u\vert^{2\la}}(\R^1),\,p\in[1,\iy)$,
  then the function $f^*(u):=(f(u)+f(-u))/2$ satisfies the properties
  \ba
  f^*\in B_{1,e}\cap L_{p,\vert u\vert^{2\la}}(\R^1),
  \qquad
  f^*(0)=f(0),\qquad
  \|f^*\|_{ L_{p,\vert u\vert^{2\la}}(\R^1)}
  \le \|f\|_{ L_{p,\vert u\vert^{2\la}}(\R^1)}.
  \ea
The asymptotic formula
   for the univariate sharp constant in \eqref{E1.6}
   with $\Omega=[-1,1]$ and $p\in[1,\iy)$
  is a consequence of \eqref{E1.6c}
  in the following form:
  \beq\label{E1.7}
   \lim_{n\to\iy}n^{-2/p}
   \NN\left(\PP_{n},L_{p}([-1,1])\right)
   =2^{1/p}\NN_0\left(B_{1}\cap L_{p,\vert u\vert}(\R^1),
   L_{p,\vert u\vert}(\R^1)\right)
   \eeq
   (see \cite[Corollary 4.6]{G2019}).
  A different version of  \eqref{E1.7}
   was proved in \cite[Theorem 1.4]{G2017} (see also
\cite[p. 94]{G2017}).

Note that relations \eqref{E1.5}, \eqref{E1.6a}, \eqref{E1.6b},
 and the second relation in \eqref{E1.6c} for
 the sharp  Nikolskii-type constants
  are special cases of more general limit relations between
 sharp   Markov--Bernstein--Nikolskii type constants
  with $\vert h(0)\vert$ and $\|h\|_{L_\iy(\Omega)}$
  in \eqref{E1.2} and \eqref{E1.3} replaced by
  $\vert D_N(h)(0)\vert$ and $\|D_N(h)\|_{L_\iy(\Omega)}$,
  respectively,
  for certain differential operators $D_N$
  (see \cite{GT2017, G2017, G2018, G2019b, G2020a}).
In addition, note that the sharp
nonweighted Bernstein--Nikolskii  constants
for entire functions of exponential type
can be easily found only for $p=2$
(see \cite[Eq. (1.6)]{G2018}).

No exact or asymptotic equalities for
 multivariate sharp constants
 in Nikolskii-type inequalities
 for algebraic polynomials are known.
In this paper we extend \eqref{E1.6c} to
asymptotic relations for
multivariate sharp constants
$\NN\left(\PP_{aV},L_{p,W}(Q^m)\right)$ and
 $\NN\left(\PP_{n,m},L_{p,W}(\BB^m)\right)$
 with Gegenbauer-type weights.
 In addition, we prove a weighted version
 of limit equality
 \eqref{E1.6a} and find certain multivariate
  sharp constants for $p=2$.
  Note that we are unaware of any applications of these
  asymptotically sharp results.
\vspace{.12in}\\
 \textbf{Main Results and Remarks.} We first discuss
 asymptotics of the sharp constants on the cube $Q^m$.

 \begin{theorem} \label{T1.2}
 If $V\subset\R^m$ satisfies the $\Pi$-condition, then for
 $p\in[1,\iy),\,m\ge 1,$
 and $\la_j\ge 0,\,1\le j\le m$,
 the following limit relation holds true:
 \bna \label{E1.8}
  &&\lim_{a\to\iy}a^{-\left(m+2\sum_{j=1}^m\la_j\right)/p}
  \NN\left(\PP_{aV},
  L_{p,\prod_{j=1}^m\left(1-x_j^2\right)^{\la_j-1/2}}
  (Q^m)\right)\nonumber\\
  &&=2^{m/p}\,\NN_0\left(B_{V}\cap
  L_{p,\prod_{j=1}^m\left\vert t_j\right\vert^{2\la_j}}(\R^m),
  L_{p,\prod_{j=1}^m\left\vert t_j\right\vert^{2\la_j}}(\R^m)\right).
 \ena
 In addition, there exists a function
 $f_0\in  \left(B_V\cap L_{p,\prod_{j=1}^m\left\vert t_j
 \right\vert^{2\la_j}}(\R^m)\right)\setminus\{0\}$
  such that
 \bna \label{E1.9}
  &&\lim_{a\to\iy}a^{-\left(m+2\sum_{j=1}^m\la_j\right)/p}
  \NN\left(\PP_{aV},
  L_{p,\prod_{j=1}^m\left(1-x_j^2\right)^{\la_j-1/2}}(Q^m)\right)\nonumber\\
&&= 2^{m/p}\left\vert f_0(0)\right\vert/\|f_0\|_
{L_{p,\prod_{j=1}^m\left\vert t_j\right\vert^{2\la_j}}(\R^m)}.
 \ena
 \end{theorem}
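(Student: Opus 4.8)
The plan is to deduce Theorem~\ref{T1.2} from the univariate relations \eqref{E1.6c}--\eqref{E1.6ca} by a tensor-product/induction-on-$m$ argument combined with the machinery already developed for limit relations of this type in \cite{G2019b, G2020a}. The two inequalities \eqref{E1.8} and \eqref{E1.9} are proved together, since \eqref{E1.9} is really the statement that the $\limsup$ in \eqref{E1.8} is attained by a single extremal entire function; thus the proof naturally splits into a \emph{lower bound} (construct good polynomials from a near-extremal $f_0\in B_V$) and an \emph{upper bound} (pass from polynomials to entire functions by a normal-families/compactness argument).

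\textbf{Lower bound.} First I would fix $p\in[1,\iy)$ and the weight, abbreviating $w(t):=\prod_{j=1}^m|t_j|^{2\la_j}$ and $W(x):=\prod_{j=1}^m(1-x_j^2)^{\la_j-1/2}$. Given $\vep>0$, choose $f\in (B_V\cap L_{p,w}(\R^m))\setminus\{0\}$ with $|f(0)|/\|f\|_{L_{p,w}(\R^m)}$ within $\vep$ of the supremum on the right-hand side of \eqref{E1.8}. Since $V$ satisfies the $\Pi$-condition, the symmetrization $f^*(z):=2^{-m}\sum_{(\sa_1,\dots,\sa_m)\in\{\pm1\}^m}f(\sa_1 z_1,\dots,\sa_m z_m)$ again lies in $B_V$, is even in each variable, satisfies $f^*(0)=f(0)$ and $\|f^*\|_{L_{p,w}(\R^m)}\le\|f\|_{L_{p,w}(\R^m)}$; so we may assume $f\in B_{V,e}$. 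The key is then to discretize $f$ into polynomials $P_a\in\PP_{aV,e}$ of "degree $\lesssim a$" via a multivariate interpolation/truncation of the Fourier--Bessel or entire-function expansion — exactly the construction used for \eqref{E1.6a} in \cite[proof of Theorem 1.2]{G2020a}, now carried out on $Q^m$ with the Gegenbauer substitution $x_j=\cos\theta_j$ in each variable. Under this substitution the weighted $L_p(Q^m,W)$-norm of a polynomial converts, up to the normalizing factor $a^{(m+2\sum\la_j)/p}$ and a factor $2^{m/p}$ coming from the two-to-one map on each coordinate, into the weighted $L_{p,w}(\R^m)$-norm of a dilated entire function; letting $a\to\iy$ and then $\vep\to0$ yields "$\ge$" in \eqref{E1.8} and simultaneously produces the candidate $f_0$ (obtained as a weak/locally-uniform limit of the dilated near-extremizers) realizing \eqref{E1.9}.

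\textbf{Upper bound.} For the reverse inequality I would take, for each large $a$, a near-extremal polynomial $P_a\in\PP_{aV}$ (WLOG even in each variable, since replacing $P_a(x)$ by $2^{-m}\sum_\sa P_a(\sa_1 x_1,\dots,\sa_m x_m)$ does not decrease the ratio — its $L_\iy$-norm is attained at a point which, by evenness, we may take in $[0,1]^m$, and by the cube's symmetry the argument of \cite[Lemma 2.9]{G2020a} applies), rescale $g_a(t):=P_a(\cos(t_1/a),\dots,\cos(t_m/a))$, show via the classical Bernstein-type bound that $\{g_a\}$ is a normal family whose limits lie in $B_V\cap L_{p,w}(\R^m)$, and pass to the limit using Fatou's lemma on the denominator and continuity at $0$ on the numerator; the normalizing powers of $a$ and the factor $2^{m/p}$ appear from the Jacobian of the change of variables as above. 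The one subtlety requiring the $\Pi$-condition and a genuine multivariate argument — and the \textbf{main obstacle} — is controlling the $L_\iy(Q^m)$-norm: one must know the sup of $|P_a|$ over $Q^m$ is asymptotically governed by its values in a fixed neighborhood of a corner and, after the $\cos$-substitution, does not escape to infinity faster than $a^{(m+2\sum\la_j)/p}$; this is where the $\Pi$-condition is essential (it makes $\PP_{aV,e}$ closed under the coordinate sign flips and forces the Newton polyhedron to be "axis-aligned"), and where I expect to invoke a multivariate analogue of the Arestov--Deikalova identity \eqref{E1.6d} together with the already-established relation \eqref{E1.6a} to pin down that the $L_\iy$ constant and the point-evaluation constant at $0$ have the same asymptotics. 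Combining the two bounds gives \eqref{E1.8}, and the extremizer constructed in the lower-bound step gives \eqref{E1.9}. $\hfill\square$
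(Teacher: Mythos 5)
Your overall strategy (reduce the sup-norm constant to evaluation at a corner, change variables to turn the Gegenbauer weight on $Q^m$ into the power weight on $\R^m$, and pass between polynomials and entire functions by approximation plus a normal-families argument) parallels the paper's route, which combines Lemma \ref{L2.7}, Lemma \ref{L4.1} (the substitution $x_j=1-2t_j^2$, giving $\PP_{2aV,e}$ and the factor $4^{-(\sum_j\la_j)/p}$, whence $2^{m/p}$), and the weighted limit relation of Theorem \ref{T1.2a}. However, there is a genuine gap at exactly the point you flag as the ``main obstacle'': the passage from $\NN\left(\PP_{aV},L_{p,W}(Q^m)\right)$ to the corner-evaluation constant $\NN_{(1,\ldots,1)}\left(\PP_{aV},L_{p,W}(Q^m)\right)$ is not proved in your sketch, only ``expected to be invoked.'' The justification you do offer — replace $P_a$ by its coordinate-sign symmetrization $2^{-m}\sum_{\sa}P_a(\sa_1x_1,\ldots,\sa_mx_m)$ and claim the ratio does not decrease — fails for the uniform-norm constant: symmetrization lowers the $L_{p,W}$-norm (triangle inequality), but it can lower $\|P_a\|_{L_\iy(Q^m)}$ as well, so the ratio $\|P\|_{L_\iy}/\|P\|_{L_{p,W}}$ need not increase, and evenness alone only places the maximum in $[0,1]^m$, not at a vertex. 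This symmetrization trick is valid for point-evaluation constants at a symmetric point (and the paper uses it only in that setting, in Remark \ref{R1.4} and Lemma \ref{L4.1}), not for $\NN(\cdot,\cdot)$. The paper closes this gap with a genuinely new ingredient you do not supply: the tensor-product Gegenbauer-type generalized translation operator on the cube (Example \ref{Ex2.6}), which maps $\PP_{aV}$ into itself precisely because of the $\Pi$-condition, has norm $1$ on $L_{p,W}(Q^m)$, and, fed into the abstract Lemma \ref{L2.1n}, yields Lemma \ref{L2.7} — the multivariate Arestov--Deikalova statement that an extremal polynomial attains its sup at a prescribed vertex. Citing \eqref{E1.6d} or \eqref{E1.6a} does not substitute for this, since \eqref{E1.6d} is univariate and \eqref{E1.6a} concerns $\NN_0$ only.

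Two smaller points. First, your opening suggestion of a tensor-product/induction-on-$m$ deduction from \eqref{E1.6c}--\eqref{E1.6ca} cannot work as stated: for a general $V$ with the $\Pi$-condition neither $\PP_{aV}$ nor $B_V$ factors across coordinates, and the extremal problems do not decouple; your actual two-bound sketch quietly abandons this. Second, once the corner reduction is available, your direct cosine substitution $x_j=\cos(t_j/a)$ is a legitimate alternative to the paper's quadratic substitution followed by Theorem \ref{T1.2a}, but it would force you to redo, in the weighted multivariate setting, the tail estimates and the compactness/approximation work (the analogues of Lemmas \ref{L3.2}--\ref{L3.5} and Steps 1--2 of the proof of Theorem \ref{T1.2a}), which your sketch compresses into a sentence; the paper's reduction exists precisely so that this analytic work is done once, for $\NN_0$ with the weights $\prod_j\vert t_j\vert^{\al_j}(1-t_j^2)^{\be_j}$, and then imported.
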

 \noindent
 One of the major ingredients of the proof of Theorem
 \ref{T1.2}
 is the following theorem of independent interest
  that presents a weighted version of relation
 \eqref{E1.6a}.
 \begin{theorem} \label{T1.2a}
 If $V\subset\R^m$ satisfies the $\Pi$-condition, then for
 $p\in (0,\iy),\,m\ge 1,$
 and $\al_j\ge 0,\,\be_j\ge -1/2,\,1\le j\le m$,
 the following limit relation holds true:
 \bna \label{E1.8a}
  &&\lim_{a\to\iy}a^{-\left(m+\sum_{j=1}^m\al_j\right)/p}
  \NN_{0}\left(\PP_{aV},
  L_{p,\prod_{j=1}^m\left\vert t_j\right\vert^{\al_j}
  \left(1-t_j^2\right)^{\be_j}}(Q^m)\right)\nonumber\\
  &&=\NN_0\left(B_{V}\cap L_{p,\prod_{j=1}^m
  \left\vert t_j\right\vert^{\al_j}}(\R^m),
  L_{p,\prod_{j=1}^m\left\vert t_j\right\vert^{\al_j}}(\R^m)\right).
 \ena
 In addition, there exists a function
 $f_0\in  \left(B_V\cap L_{p,\prod_{j=1}^m\left\vert t_j
 \right\vert^{\al_j}}(\R^m)\right)\setminus\{0\}$
  such that
 \bna \label{E1.9a}
  &&\lim_{a\to\iy}a^{-\left(m+\sum_{j=1}^m\al_j\right)/p}
  \NN_0\left(\PP_{aV},
  L_{p,\prod_{j=1}^m\left\vert t_j\right\vert^{\al_j}\left(1-t_j^2
  \right)^{\be_j}}(Q^m)\right)\nonumber\\
&&= \vert f_0(0)\vert/\|f_0\|_
{L_{p,\prod_{j=1}^m\left\vert t_j\right\vert^{\al_j}}(\R^m)}.
 \ena
 \end{theorem}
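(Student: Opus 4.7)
The plan is to obtain \eqref{E1.8a} by separately proving the matching upper and lower bounds on the left-hand side, then deducing the existence of an extremizer $f_0$ via a compactness argument. Throughout, write $W^*(t):=\prod_{j=1}^m|t_j|^{\al_j}$ and $W(x):=W^*(x)\prod_{j=1}^m(1-x_j^2)^{\be_j}$.

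For the upper bound I would perform the substitution $t=ax$ in the definition of $\NN_0(\PP_{aV},L_{p,W}(Q^m))$, which converts each polynomial $P\in\PP_{aV}$ into a rescaled entire function $f_a(t):=P(t/a)$ and yields
\begin{equation*}
a^{-(m+\sum_{j=1}^m\al_j)/p}\NN_0(\PP_{aV},L_{p,W}(Q^m))=\sup_{P\in\PP_{aV}\setminus\{0\}}\frac{|f_a(0)|}{\left(\int_{Q^m(a)}|f_a(t)|^p\,W^*(t)\prod_{j=1}^m(1-(t_j/a)^2)^{\be_j}\,dt\right)^{1/p}}.
\end{equation*}
I would choose near-extremal polynomials $P_a$ normalized so that $|f_a(0)|=1$ and the denominator stays bounded. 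Invoking the Plancherel--Polya/Paley--Wiener properties of $B_V$ collected in Lemma \ref{L3.2} together with a standard normal-family argument, one passes to a subsequence $f_{a_n}$ converging locally uniformly to an entire function $f\in B_V$. Fatou's lemma then applies to the denominator: $(1-(t_j/a)^2)^{\be_j}\to 1$ pointwise on $\R^m$, and this factor is bounded below by $1$ when $\be_j<0$, while for $\be_j\ge 0$ it is $\le 1$ but converges uniformly to $1$ on each compact set. In either case one concludes $f\in B_V\cap L_{p,W^*}(\R^m)$ and $\limsup_{a\to\iy}a^{-(m+\sum_j\al_j)/p}\NN_0(\PP_{aV},L_{p,W}(Q^m))\le\NN_0(B_V\cap L_{p,W^*}(\R^m),L_{p,W^*}(\R^m))$.

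For the lower bound I would begin with a near-extremal $g\in B_V\cap L_{p,W^*}(\R^m)$ satisfying $g(0)=1$ and construct polynomials $P_a\in\PP_{aV}$ that realize the same ratio asymptotically. Mirroring the unweighted proof of \eqref{E1.6a} in \cite[Theorem~1.2]{G2020a}, I would fix a small $\vep>0$, replace $g$ by the slightly contracted $g((1-\vep)\cdot)\in B_{(1-\vep)V}$, and multiply by an auxiliary ``bell'' polynomial $\psi_{\vep a}\in\PP_{\vep aV}$ with $\psi_{\vep a}(0)=1$ whose weighted $L_p$-mass concentrates near the origin. A Jackson-type discretization (or truncated interpolation) of the resulting entire function of type $aV$ produces $P_a\in\PP_{aV}$ whose rescaling $P_a(t/a)$ approximates $g((1-\vep)t)\psi_{\vep a}(t/a)$ locally uniformly, with $\|P_a\|_{L_{p,W}(Q^m)}$ controlled by $a^{-(m+\sum_j\al_j)/p}(1+o(1))\|g\|_{L_{p,W^*}(\R^m)}$. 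The factor $\prod_{j=1}^m(1-x_j^2)^{\be_j}$ is locally integrable on $Q^m$ by the assumption $\be_j\ge-1/2$ and tends to $1$ on compact subsets of $(-1,1)^m$ after the scaling $x=t/a$, so it contributes only a $(1+o(1))$-correction. Passing to the limit $a\to\iy$ and then $\vep\to 0$ yields the matching lower bound.

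The extremizer $f_0$ of \eqref{E1.9a} is then obtained by the same normal-family compactness: take a maximizing sequence $g_n$ for the right-hand side of \eqref{E1.8a} normalized by $g_n(0)=1$, extract a locally uniform subsequential limit $f_0\in B_V$, and apply Fatou to conclude $\|f_0\|_{L_{p,W^*}(\R^m)}\le\liminf_n\|g_n\|_{L_{p,W^*}(\R^m)}$. The main obstacle is the lower-bound construction: producing $P_a$ of \emph{exact} type $aV$ (not $(1+\vep)aV$) whose rescaled denominator equals $\|g\|_{L_{p,W^*}(\R^m)}$ up to a $(1+o(1))$-factor while simultaneously tracking the value $g(0)$. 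The $\Pi$-condition on $V$ plays a crucial role here, as it permits symmetrization in each variable and the use of tensor-product bell polynomials, and the combined conditions $\al_j\ge 0$ and $\be_j\ge-1/2$ ensure the local integrability of the target weight and prevent the scaling limit from being obstructed by the weight factors along coordinate hyperplanes.
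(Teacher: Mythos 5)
Your lower-bound argument (constructing $P_a\in\PP_{aV}$ from a near-extremal $g\in B_V\cap L_{p,W^*}(\R^m)$ via a sinc-type bell factor and a Jackson-type polynomial approximant on $Q^m(a\tau)$) is essentially the route the paper takes in Step~1, via Lemma~\ref{L3.3} together with the tail estimate that the weight factors $(1-(t_j/(a\tau))^2)^{\be_j}$ contribute only a vanishing error under the hypotheses $\al_j\ge 0$, $\be_j\ge -1/2$.

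However, your upper-bound argument has a genuine gap. You take near-extremal $P_a\in\PP_{aV}$, rescale to $f_a(t):=P_a(t/a)$, and then invoke ``the Plancherel--Polya/Paley--Wiener properties of $B_V$ collected in Lemma~\ref{L3.2} together with a standard normal-family argument'' to extract a locally uniform limit $f\in B_V$. This does not go through as stated: the rescaled functions $f_a$ are \emph{polynomials}, and while they trivially lie in $B_V$ in the sense of Definition~\ref{D1.1}, they are unbounded on $\R^m$ (unless constant), so they fail the hypothesis $f_n\in B_V\cap L_\iy(\R^m)$ with $\sup_n\|f_n\|_{L_\iy(\R^m)}<\iy$ required by Lemma~\ref{L3.2}(c). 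Without a uniform bound on all of $\R^m$ the Phragm\'en--Lindel\"of step underlying the normal-family argument collapses, and a locally uniform subsequential limit of polynomials need not be of exponential type at all (Taylor polynomials of $e^z$ being the standard counterexample). The missing idea is the substitution $t_j=a\tau\sin\bigl(y_j/(a\tau)\bigr)$: the paper passes from $P_a$ to $R_{a,\tau}(y):=P_a\bigl(a\tau\sin(y_1/(a\tau)),\ldots,a\tau\sin(y_m/(a\tau))\bigr)$, which is a genuinely periodic trigonometric polynomial lying in $B_{(1/\tau)V}$ and uniformly bounded on all of $\R^m$ by Lemma~\ref{L3.4} (the crude Nikolskii bound on $Q^m(a\tau)$). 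Only then does the compactness of Lemma~\ref{L3.2}(c) apply, and one must still control the distortion of the weight under the sine change of variables (this is what Lemma~\ref{L3.5} is for) and finally send $\tau\to 1^-$. Your Fatou argument for the denominator would otherwise be fine, but it is built on a compactness step that is not available without this transformation.

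One further remark: your construction of the extremizer $f_0$ (a maximizing sequence for the right-hand side of \eqref{E1.8a}, normalized at the origin, plus Nikolskii's inequality \eqref{E3.5n} for the $L_\iy$ bound and normal-family compactness) is a valid alternative to the paper, which instead extracts $f_0$ directly from the limiting functions $f_{0,\tau}$ produced in Step~2. Both routes give \eqref{E1.9a} once \eqref{E1.8a} is known, but you should note that the paper's version also produces $f_0$ without separately reproving that the supremum is attained.
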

 \noindent
 Next, the asymptotic of the sharp constant on the unit ball $\BB^m$
is discussed.
 \begin{theorem} \label{T1.3}
 For  $p\in[1,\iy),\,m\ge 1,$
 and $\la \ge 0$,
 the following equalities hold true:
 \bna \label{E1.10}
  &&\lim_{n\to\iy}n^{-\left(m+2\la\right)/p}
  \NN\left(\PP_{n,m},
  L_{p,\left(1-\vert x\vert^2\right)^{\la-1/2}}\left(\BB^m\right)\right)\nonumber\\
  &&=A_1\,\NN_0\left(B_{1,e}\cap L_{p,\vert u\vert^{m+2\la-1}}(\R^1),
  L_{p,\vert u\vert^{m+2\la-1}}(\R^1)\right)\nonumber\\
  &&=A_2\NN_0\left(B_{\BB^m}\cap L_{p,\vert t\vert^{2\la}}(\R^m),
  L_{p,\vert t\vert^{2\la}}(\R^m)\right),
 \ena
 where
  \bna
  &&A_1=A_1(m,p,\la):=\left(\frac{2\Gamma(\la+m/2)}
  {\pi^{(m-1)/2}\Gamma(\la+1/2)}\right)^{1/p},\label{E1.10a}\\
  &&A_2=A_2(m,p,\la):=\left(\frac{2\pi^{1/2}\Gamma(\la+m/2)}
  {\Gamma(m/2)\Gamma(\la+1/2)}\right)^{1/p}.\label{E1.10ab}
  \ena
 In addition, there exists a function
 $f_0\in  \left(B_{\BB^m}\cap L_{p,\left\vert t
 \right\vert^{2\la}}(\R^m)\right)\setminus\{0\}$ such that
 \beq \label{E1.10b}
  \lim_{n\to\iy}n^{-\left(m+2\la\right)/p}\NN\left(\PP_{n,m},
  L_{p,\left(1-\vert x\vert^2\right)^{\la-1/2}}
  \left(\BB^m\right)\right)
= A_2 \vert f_0(0)\vert/\|f_0\|_{ L_{p,\vert t\vert^{2\la}}(\R^m)}.
 \eeq
 \end{theorem}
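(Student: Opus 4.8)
The plan is to reduce the multivariate estimate on $\BB^m$ to the one‑dimensional relation \eqref{E1.6c} by means of ridge polynomials, to deduce the second equality in \eqref{E1.10} from a radial reduction for entire functions, and to obtain \eqref{E1.10b} from \eqref{E1.6ca}. Throughout set $\Lambda:=\la+(m-1)/2\ge 0$, so that $2\Lambda+1=m+2\la$, and put
\[
c_{m,\la}:=\int_{\BB^{m-1}}\left(1-|z|^2\right)^{\la-1/2}dz
=\frac{\pi^{(m-1)/2}\,\Gamma(\la+1/2)}{\Gamma(\la+m/2)}.
\]
From \eqref{E1.10a}--\eqref{E1.10ab} one checks the elementary identities $A_1=2^{1/p}c_{m,\la}^{-1/p}$ and $A_1=A_2\left(\Gamma(m/2)/\pi^{m/2}\right)^{1/p}$, which link the three quantities in \eqref{E1.10}. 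Since $\BB^1=[-1,1]$ and $\PP_{n,1}=\PP_n$, for $m=1$ the theorem reduces to \eqref{E1.6c}, so we assume $m\ge 2$.

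For the lower bound, given $p\in\PP_n$ take the ridge polynomial $P(x):=p(x_1)\in\PP_{n,m}$. Then $\|P\|_{L_\iy(\BB^m)}=\|p\|_{L_\iy([-1,1])}$, while Fubini together with the substitution $x'=\sqrt{1-x_1^2}\,z$ in the last $m-1$ variables gives $\|P\|_{L_{p,(1-|x|^2)^{\la-1/2}}(\BB^m)}=c_{m,\la}^{1/p}\,\|p\|_{L_{p,(1-u^2)^{\Lambda-1/2}}([-1,1])}$. Choosing $p=p_n$ nearly extremal for $\NN(\PP_n,L_{p,(1-u^2)^{\Lambda-1/2}}([-1,1]))$ and invoking \eqref{E1.6c} with $\la$ replaced by $\Lambda$ (legitimate since $\Lambda\ge 0$), one obtains
\[
\liminf_{n\to\iy}n^{-(m+2\la)/p}\,\NN\!\left(\PP_{n,m},L_{p,(1-|x|^2)^{\la-1/2}}(\BB^m)\right)\ \ge\ A_1\,\NN_0\!\left(B_{1,e}\cap L_{p,|u|^{m+2\la-1}}(\R^1),L_{p,|u|^{m+2\la-1}}(\R^1)\right),
\]
i.e. the liminf is at least the second quantity in \eqref{E1.10}.

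The matching upper bound is the main obstacle. Let $P_n\in\PP_{n,m}$ be extremal with $\|P_n\|_{L_\iy(\BB^m)}=|P_n(x_n^*)|$; by the interior Nikolskii estimate (exponent $m/p$, \cite[Lemma 2.9]{G2020a}) the points $x_n^*$ tend to $S^{m-1}$, and after a rotation $x_n^*=r_n e_1$ with $r_n\to 1$. Near the pole $e_1$ polynomials of degree $\le n$ oscillate on the scales $n^{-2}$ radially and $n^{-1}$ tangentially, so we substitute $x_1=1-w_1/(2n^2)$, $x_j=w_j/n$ for $j=2,\dots,m$; then $1-|x|^2\sim(w_1-|w'|^2)/n^2$, $dx\sim\tfrac12\,n^{-(m+1)}dw$, and $\BB^m$ passes to the paraboloidal region $\D:=\{w\in\R^m:\,w_1\ge|w'|^2\}$ with weight $(w_1-|w'|^2)^{\la-1/2}$. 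Normalizing $\|P_n\|_{L_{p,(1-|x|^2)^{\la-1/2}}(\BB^m)}=1$ and rescaling $P_n$ by $n^{-(m+2\la)/p}$, a Bernstein--Nikolskii type bound for the limiting exponential‑type class shows the rescaled polynomials form a normal family, converging along a subsequence to an entire function $g$ with $\|g\|_{L_{p,(w_1-|w'|^2)^{\la-1/2}}(\D)}\le 2^{1/p}$, while $n^{-(m+2\la)/p}\|P_n\|_{L_\iy(\BB^m)}\to|g(w_0)|$ for some $w_0\in\D$. Unfolding $\D$ by $w_1=u_1^2+|w'|^2$ maps it onto $\R^m$ with the weight $|u_1|^{2\la}$; using $\left|\sum_{j=1}^m\zeta_j^2\right|^{1/2}\le\|\zeta\|_{\BB^m}^*$ to control the limiting exponential type, one identifies the resulting extremal problem with the one‑dimensional one and, via the identities above, gets $\limsup_n n^{-(m+2\la)/p}\NN(\PP_{n,m},L_{p,(1-|x|^2)^{\la-1/2}}(\BB^m))\le A_1\,\NN_0(B_{1,e}\cap L_{p,|u|^{m+2\la-1}}(\R^1),L_{p,|u|^{m+2\la-1}}(\R^1))$. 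Making the passage to $g$ rigorous and pinning down the limiting class is the delicate part; alternatively one may first establish, in the spirit of \cite{AD2015} and \eqref{E1.6d}, that the ball extremal attains its $L_\iy$‑norm on $S^{m-1}$ and is asymptotically a ridge polynomial.

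It remains to prove the second equality in \eqref{E1.10} and \eqref{E1.10b}. Averaging over $SO(m)$ preserves $f(0)$, does not increase the $L_{p,|t|^{2\la}}(\R^m)$‑norm (Minkowski's integral inequality, $p\ge 1$), and keeps $f$ in $B_{\BB^m}$ since $\BB^m$ is rotation invariant and satisfies the $\Pi$‑condition; hence $\NN_0(B_{\BB^m}\cap L_{p,|t|^{2\la}}(\R^m),L_{p,|t|^{2\la}}(\R^m))$ is attained on radial functions. A radial $f\in B_{\BB^m}$ equals $h\!\left(\sum_{j=1}^m t_j^2\right)$ with $g(u):=h(u^2)\in B_{1,e}$ — the type estimate following from $\left|\sum_j\zeta_j^2\right|^{1/2}\le\|\zeta\|_{\BB^m}^*$ — and conversely; polar coordinates give $\|f\|_{L_{p,|t|^{2\la}}(\R^m)}^p=\tfrac12|S^{m-1}|\,\|g\|_{L_{p,|u|^{m+2\la-1}}(\R^1)}^p$ with $|S^{m-1}|=2\pi^{m/2}/\Gamma(m/2)$, whence
\[
\NN_0\!\left(B_{\BB^m}\cap L_{p,|t|^{2\la}}(\R^m),L_{p,|t|^{2\la}}(\R^m)\right)
=\left(\frac{\Gamma(m/2)}{\pi^{m/2}}\right)^{1/p}\NN_0\!\left(B_{1,e}\cap L_{p,|u|^{m+2\la-1}}(\R^1),L_{p,|u|^{m+2\la-1}}(\R^1)\right);
\]
together with $A_1=A_2(\Gamma(m/2)/\pi^{m/2})^{1/p}$ this is the second equality in \eqref{E1.10}. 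Finally, take $g_0\in B_{1,e}\cap L_{p,|u|^{m+2\la-1}}(\R^1)$ provided by \eqref{E1.6ca} with $\la$ replaced by $\Lambda$, write $g_0(u)=h_0(u^2)$, and set $f_0(t):=h_0\!\left(\sum_{j=1}^m t_j^2\right)\in B_{\BB^m}$; the norm identity yields $A_2|f_0(0)|/\|f_0\|_{L_{p,|t|^{2\la}}(\R^m)}=A_1|g_0(0)|/\|g_0\|_{L_{p,|u|^{m+2\la-1}}(\R^1)}$, which by \eqref{E1.6ca} is the limit in \eqref{E1.10b}.
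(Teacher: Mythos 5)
Your lower bound (ridge polynomials $P(x)=p(x_1)$, Fubini with $x'=\sqrt{1-x_1^2}\,z$, then \eqref{E1.6c} with $\la$ replaced by $\Lambda=\la+(m-1)/2$) is correct, and your treatment of the second equality in \eqref{E1.10} and of \eqref{E1.10b} — rotation averaging plus the identification of radial $f\in B_{\BB^m}$ with $g\in B_{1,e}$ via $\left\vert\sum_j \zeta_j^2\right\vert^{1/2}\le\|\zeta\|_{\BB^m}^*$ and polar coordinates — is sound and is essentially the content of the paper's Proposition \ref{P4.2a} (whose proof is taken over from \cite[Corollary 3.11]{G2019}). The genuine gap is the upper bound for the first equality in \eqref{E1.10}, which you yourself flag as "the delicate part." Your blow-up sketch ($x_1=1-w_1/(2n^2)$, $x_j=w_j/n$, passage to a paraboloidal region and a limiting entire function $g$) is not a proof: the claim that the maximum points $x_n^*$ must approach $S^{m-1}$ via the interior Nikolskii exponent $m/p$ fails already for $\la=0$, where $m/p=(m+2\la)/p$; the normal-family statement and the bound $\|g\|_{L_{p,(w_1-|w'|^2)^{\la-1/2}}(\D)}\le 2^{1/p}$ are unsupported (nothing prevents the $L_p$-mass from living outside the $n^{-2}\times n^{-1}$ window, and no Bernstein--Nikolskii inequality adapted to this anisotropic scaling is provided); and the identification of the limiting extremal problem after "unfolding" the paraboloid is not carried out. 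Without the matching $\limsup$ bound, neither the first equality in \eqref{E1.10} nor \eqref{E1.10b} (which you derive from it) is established.

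The paper closes exactly this gap by exact finite-$n$ identities rather than asymptotic blow-up. First, the generalized translation operators on $\BB^m$ (Chebyshev-type for $\la=0$, Gegenbauer-type for $\la>0$; Lemmas \ref{L2.9} and \ref{L2.11}) produce, for every $n$, an extremal polynomial whose uniform norm is attained at a point of $S^{m-1}$, so that $\NN\left(\PP_{n,m},L_{p,(1-\vert x\vert^2)^{\la-1/2}}(\BB^m)\right)=\NN_{x_0(m)}(\cdot)$ — this is the rigorous version of your "alternative" suggestion in the spirit of \cite{AD2015}. Second, Haar averaging over $D(m,x_0)$ together with the structure of invariant polynomials (Propositions \ref{P4.1}, \ref{P4.2}) and the slicing substitution $v=\tau\sqrt{1-u^2}$ (Proposition \ref{P4.3}, where the key point is that $P_2\left(u,\tau\sqrt{1-u^2}\right)$ is again a polynomial of degree at most $n$ in $u$) give the exact identity $\NN_{x_0(m)}(\cdot)=C_{19}\,\NN_1\left(\PP_n,L_{p,(1-u^2)^{m/2+\la-1}}([-1,1])\right)$, after which \eqref{E1.6c}--\eqref{E1.6d} yield both directions of the limit at once. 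To repair your argument you would need either to carry out these two reductions (or an equivalent of them), or to make the parabolic-scaling compactness argument rigorous, which is a substantially harder task than the remarks in your sketch suggest.
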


 Finally, the sharp constants in \eqref{E1.10} can be found for $p=2$.
 \begin{theorem} \label{T1.3a}
 For  $m\ge 1$
 and $\la \ge 0$,
 the following equalities hold true:
 \bna
 &&\NN\left(\PP_{n,m},
  L_{2,\left(1-\vert x\vert^2\right)^{\la-1/2}}
  \left(\BB^m\right)\right)
  \nonumber\\
  &&=\left(\frac{(2\la+2n+m)\Gamma(2\la+n+m)}
  {2^{2\la+m-1} \pi^{(m-1)/2}(2\la+m)\Gamma(\la+1/2)
   \Gamma(\la+m/2)n!}\right)^{1/2};\label{E1.10c}\\
   &&\NN_0\left(B_{\BB^m}\cap L_{2,\vert t\vert^{2\la}}(\R^m),
  L_{2,\vert t\vert^{2\la}}(\R^m)\right)\nonumber\\
  &&=\left(\frac{\Gamma(m/2)}
  {2^{2\la+m-1} \pi^{m/2}(2\la+m)
   \Gamma^2(\la+m/2)}\right)^{1/2}.\label{E1.10d}
  \ena
  \end{theorem}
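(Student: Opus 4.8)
The plan is to establish the exact value \eqref{E1.10c} by reducing the $m$-dimensional problem to the univariate Gegenbauer case, and then to deduce \eqref{E1.10d} from \eqref{E1.10c} together with relation \eqref{E1.10} of Theorem \ref{T1.3}. Throughout put $W(x):=\left(1-|x|^2\right)^{\la-1/2}$, $e:=(1,0,\dots,0)\in S^{m-1}$, $\g:=\la+(m-1)/2$, and $c_{m-1,\la}:=\int_{\BB^{m-1}}\left(1-|y|^2\right)^{\la-1/2}dy=\pi^{(m-1)/2}\G(\la+1/2)/\G(\la+m/2)$ (a beta-integral).

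\emph{Reduction to a boundary point.} Since $\|h\|_{L_\iy(\BB^m)}=\max_{x\in\BB^m}|h(x)|$ for $h\in\PP_{n,m}$, we have $\NN\left(\PP_{n,m},L_{2,W}(\BB^m)\right)=\sup_{x\in\BB^m}\NN_x\left(\PP_{n,m},L_{2,W}(\BB^m)\right)$, and for $p=2$, $\NN_x\left(\PP_{n,m},L_{2,W}(\BB^m)\right)^2=K_n(x,x)$, the diagonal of the reproducing kernel of the finite-dimensional space $\PP_{n,m}\subset L_{2,W}(\BB^m)$. I would prove $K_n(x,x)\le K_n(e,e)$ for all $x\in\BB^m$, i.e. $\NN=\NN_e$ (the reverse being trivial). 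For $m=1$ this is precisely \eqref{E1.6d}. For $m\ge2$ and $\la>0$, I would invoke Xu's closed form for the degree-$j$ orthogonal-projection kernel on the ball (see, e.g., the monograph of Dunkl and Xu), of the shape $P_j(x,y)=c\,\frac{j+\g}{\g}\int_{-1}^1 C_j^{\g}\!\left((x,y)+t\sqrt{1-|x|^2}\sqrt{1-|y|^2}\right)\left(1-t^2\right)^{\la-1}dt$ with $c>0$ and $C_j^{\g}$ the Gegenbauer polynomial; summing over $0\le j\le n$ and using the identity $\sum_{j=0}^n\frac{j+\g}{\g}C_j^{\g}(u)=\mathrm{const}\cdot K_n^{\g}(u,1)$ with $\mathrm{const}>0$, where $K_n^{\g}$ is the reproducing kernel of $\PP_n$ in $L_{2,(1-u^2)^{\g-1/2}}([-1,1])$, gives $K_n(x,x)=c'\int_{-1}^1 K_n^{\g}\!\left(|x|^2+t(1-|x|^2),1\right)\left(1-t^2\right)^{\la-1}dt$ with $c'>0$. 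The argument of $K_n^{\g}$ lies in $[2|x|^2-1,1]\subseteq[-1,1]$, and for $u\in[-1,1]$ one has $\left|K_n^{\g}(u,1)\right|\le\sqrt{K_n^{\g}(u,u)\,K_n^{\g}(1,1)}\le K_n^{\g}(1,1)$ by Cauchy--Schwarz and the univariate bound $\NN_u\le\NN=\NN_1$ for $\PP_n$ in $L_{2,(1-v^2)^{\g-1/2}}([-1,1])$ (the last equality being \eqref{E1.6d} with parameter $\g$); since $\left(1-t^2\right)^{\la-1}\ge0$ this yields $K_n(x,x)\le K_n(e,e)$. The case $\la=0$ then follows by continuity in $\la$.

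\emph{Passage to one variable and conclusion of \eqref{E1.10c}.} Slicing $\BB^m$ by hyperplanes orthogonal to $e$ and substituting $x'=\sqrt{1-u^2}\,y'$ on the slice $\{x_1=u\}$, one checks that for $P\in\PP_{n,m}$ the function $\widetilde P(u):=\int_{\BB^{m-1}}P\!\left(u,\sqrt{1-u^2}\,y'\right)\left(1-|y'|^2\right)^{\la-1/2}dy'$ lies in $\PP_n$, satisfies $\widetilde P(1)=c_{m-1,\la}\,P(e)$, and (by Cauchy--Schwarz on the inner integral, then undoing the substitution) obeys $\left\|\widetilde P\right\|^2_{L_{2,(1-u^2)^{\g-1/2}}([-1,1])}\le c_{m-1,\la}\left\|P\right\|^2_{L_{2,W}(\BB^m)}$; taking in turn $P$ to be a ridge polynomial $P(x)=p((x,e))$ shows these estimates are sharp. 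Hence $\NN_e\left(\PP_{n,m},L_{2,W}(\BB^m)\right)^2=c_{m-1,\la}^{-1}\NN_1\!\left(\PP_n,L_{2,(1-u^2)^{\g-1/2}}([-1,1])\right)^2$. By \eqref{E1.6d} (parameter $\g$) the univariate $\NN_1$ equals $\NN$, which Lupas's formula \eqref{E1.6ba} (with $\la$ replaced by $\g=\la+(m-1)/2$) evaluates; inserting the value of $c_{m-1,\la}$ and simplifying yields \eqref{E1.10c}.

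\emph{The constant for entire functions, and the main obstacle.} By Theorem \ref{T1.3} with $p=2$, relation \eqref{E1.10} gives $\NN_0\!\left(B_{\BB^m}\cap L_{2,|t|^{2\la}}(\R^m),L_{2,|t|^{2\la}}(\R^m)\right)=A_2^{-1}\lim_{n\to\iy}n^{-(m+2\la)/2}\NN\!\left(\PP_{n,m},L_{2,(1-|x|^2)^{\la-1/2}}(\BB^m)\right)$; substituting \eqref{E1.10c}, using Stirling's asymptotics $\G(n+2\la+m)/n!\sim n^{2\la+m-1}$ and $2\la+2n+m\sim2n$, and simplifying with \eqref{E1.10ab}, produces \eqref{E1.10d}. (Alternatively \eqref{E1.10d} follows from the last equality in \eqref{E1.10} and the univariate value $\NN_0(B_1\cap L_{2,|u|^{2\g}}(\R^1),L_{2,|u|^{2\g}}(\R^1))=2^{-\g}(2\g+1)^{-1/2}\G(\g+1/2)^{-1}$, the limiting form of \eqref{E1.6ba}.) The principal difficulty is the boundary reduction $\NN=\NN_e$ --- that the diagonal reproducing kernel of $\PP_{n,m}$ on the ball is maximal on the sphere; my route derives it from the univariate relation \eqref{E1.6d} via Xu's projection-kernel formula, at the cost of bookkeeping the positive constants there and treating the degenerate parameters ($m=1$ and $\la=0$) separately.
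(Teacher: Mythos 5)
Your proposal is correct, and for \eqref{E1.10c} it takes a genuinely different route from the paper. The paper obtains the boundary reduction $\NN=\NN_{x_0(m)}$ from the generalized translation operators of Examples \ref{Ex2.8} and \ref{Ex2.10} (Lemmas \ref{L2.9} and \ref{L2.11}, valid for all $p\in[1,\iy)$ and already needed for Theorem \ref{T1.3}), followed by rotation invariance, and then passes to one variable via Proposition \ref{P4.3}, whose proof averages over the stabilizer $D(m,x_0)$ and uses the structure of invariant polynomials (Proposition \ref{P4.1}); finally it applies \eqref{E1.6d} and Lupas's formula \eqref{E1.6ba} with $\la$ replaced by $\la+(m-1)/2$. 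You instead exploit the Hilbert-space structure at $p=2$: the identity $\NN_x^2=K_n(x,x)$, Xu's compact formula for the projection kernels on the ball, the positive-constant identity $\sum_{j\le n}\frac{j+\g}{\g}C_j^{\g}(u)=c\,K_n^{\g}(u,1)$, and Cauchy--Schwarz together with the univariate fact $K_n^{\g}(u,u)\le K_n^{\g}(1,1)$ (i.e.\ \eqref{E1.6d} with parameter $\g$) to get $K_n(x,x)\le K_n(e,e)$; your slice-averaging construction $\widetilde P$ then reproduces exactly the $p=2$ case of Proposition \ref{P4.3}, since $c_{m-1,\la}^{-1/2}=C_{19}(m,2,\la)$, and the constants indeed combine to \eqref{E1.10c}. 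I checked the bookkeeping: $\widetilde P\in\PP_n$ (odd monomials in $x'$ integrate to zero), the weight exponents cancel after the substitution $x'=\sqrt{1-u^2}\,y'$, and ridge polynomials give sharpness. What the paper's route buys is uniformity in $p$ and reuse of machinery already built for Theorem \ref{T1.3}; what yours buys is a self-contained $p=2$ argument avoiding GTOs, at the cost of importing Xu's kernel formula and of handling $\la=0$ and $m=1$ separately --- your appeal to continuity in $\la$ at $\la=0$ is only asserted, but it is easily justified (the Gram matrix of monomials depends continuously on $\la\ge 0$ by dominated convergence, or one can use the limiting form of Xu's formula with the measure $\tfrac12(\de_1+\de_{-1})$ as in \eqref{E2.6n}). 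For \eqref{E1.10d} your derivation (divide the asymptotics of \eqref{E1.10c} by $n^{(m+2\la)/2}$ and use \eqref{E1.10} with the constant $A_2$ of \eqref{E1.10ab}) is exactly the paper's argument.
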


 \begin{remark}\label{R1.4}
Relations \eqref{E1.8a} and \eqref{E1.9a} show that the function
$f_0\in B_{V}\cap L_{p,\prod_{j=1}^m
\left\vert t_j\right\vert^{\al_j}}(\R^m)$
from Theorem \ref{T1.2a} is an extremal function for
$\NN_0\left(B_{V}\cap
  L_{p,\prod_{j=1}^m\left\vert t_j\right\vert^{\al_j}}(\R^m),
  L_{p,\prod_{j=1}^m\left\vert t_j\right\vert^{\al_j}}(\R^m)\right)$.
  Moreover, the extremal function $f_0$ in \eqref{E1.9a}
   can be chosen from $B_{V,e}\cap L_{p,\prod_{j=1}^m
\left\vert t_j\right\vert^{\al_j}}(\R^m)$ for $p\in[1,\iy)$.
This result can be proved
 by the following symmetrization trick for $p\in[1,\iy)$:
  the function
 \ba
  f_{0,e}(t):=2^{-m}\sum_{\left\vert \de_j\right\vert=1,\,1\le j\le m}
 f_{0}\left(\de_1t_1,\ldots, \de_mt_m\right)
 \ea
 belongs to $B_{V,e}$ and
 \ba
 \frac{\left\vert f_0(0)\right\vert}
 {\left\| f_0\right\|_{L_{p,\prod_{j=1}^m\left\vert t_j\right
  \vert^{\al_j}}(\R^m)}}
  \le \frac{\left\vert f_{0,e}(0)\right\vert}
 {\left\| f_{0,e}\right\|_{L_{p,\prod_{j=1}^m\left\vert t_j\right
  \vert^{\al_j}}(\R^m)}}.
 \ea
In addition, for $p\in[1,\iy),\, \PP_{aV}$ in \eqref{E1.8a}
 and \eqref{E1.9a}
 can be replaced by $\PP_{aV,e}$, and also
 $B_{V}$ in \eqref{E1.8} and \eqref{E1.8a}
 and $B_{\BB^m}$ in \eqref{E1.10} and \eqref{E1.10d}
 can be replaced by $B_{V,e}$ and $B_{\BB^m,e}$, respectively.
 These results can be proved by
 the same symmetrization trick;
 see also general symmetrization (invariance) theorems in
 \cite[Theorems 1.1 and 1.2]{G2019}.
 \end{remark}

 \begin{remark}\label{R1.4a}
  Limit relations
  \eqref{E1.8} and \eqref{E1.10}
  coincide for $m=1$,
  and their slightly different versions
 were established in \cite[Theorem 4.3]{G2019}
 (see also \eqref{E1.6c}).
 A special case of Theorem \ref{T1.2a} for $\al_j=\be_j=0,\,1\le j\le m$,
was proved in \cite[Theorem 1.2]{G2020a} (see also \eqref{E1.6a}).
  Note that to prove Theorem \ref{T1.2}, it suffices to prove \eqref{E1.8a} for
 $p\in[1,\iy)$
 and $\al_j=2\la_j,\,\be_j=\la_j-1/2,\,\la_j\ge 0,\,1\le j\le m$,
 but it is possible to prove Theorem \ref{T1.2a} for the wider range of
 $p\in(0,\iy)$ and $\al_j\ge 0,\,\be_j\ge -1/2,\,1\le j\le m$,
  without additional steps.
  In addition, note that equality \eqref{E1.10c} for $m=1$ is reduced to
  \eqref{E1.6ba}
  and equality \eqref{E1.10d} is known for $\la=0$
  (see, e.g., \cite[Eq. (1.6)]{G2018}).
 \end{remark}

\begin{remark}\label{R1.5}
In definitions \eqref{E1.2} and \eqref{E1.3} of the sharp constants, we
discuss only complex-valued functions $P$ and $f$. We can define similarly
the "real" sharp constants if the suprema in \eqref{E1.2} and \eqref{E1.3}
 are taken over all real-valued functions
on $\R^m$ from $\PP_{aV}\setminus\{0\}$
and $(B_V\cap L_{p,W}(\R^m))\setminus\{0\}$, respectively.
It turns out that the "complex" and "real" sharp constants coincide.
 For $m=1$ this fact was proved in \cite[Sect. 1]{G2017} (cf.
\cite[Theorem 1.1]{GT2017} and \cite[Remark 1.5]{G2019b}),
and the case of $m>1$ can be proved similarly.
\end{remark}

\begin{remark}\label{R1.5a}
Note that the problem of finding the asymptotic
behaviour of
$\NN\left(B,L_{p,W}(\Omega)\right)$ for $B,\,W$, and $\Omega$
other than those ones in Theorems \ref {T1.2}, \ref {T1.3},
and \ref {T1.3a} is still open.
In addition, the following two open questions are raised by
an anonymous referee:
is it possible firstly, to replace  the Gegenbauer-type
weights of Theorems \ref {T1.2} and \ref {T1.2a} by
more general Jacobi-type weights
and secondly, to prove a multivariate version of \eqref{E1.7}
 for the cube $Q^m$ and $p\in(0,1)$ as it was done
in \cite[Sect. 3.1]{GM2020}?
\end{remark}
The proofs of Theorems \ref {T1.2}--\ref {T1.3a}
 are presented in Sections \ref{S3n}--\ref{S4n}.
 The proof of Theorem \ref {T1.2a}
 is a weighted version of the proof
 of  \cite[Theorem 1.2]{G2020a},
 and it follows general ideas developed
  in \cite[Corollary 7.1]{G2020}.

To prove Theorems \ref {T1.2} and \ref {T1.3},
  we first
  reduce the corresponding sharp constants
  \eqref{E1.3} of these theorems to constants \eqref{E1.2}.
  This important step is accomplished
  by using multivariate analogues of
equality \eqref{E1.6d} that
are discussed in Section \ref{S2n}.
In particular, Lemma \ref{L2.1n} of
independent interest discusses in general settings
the equality
$\NN\left(B,L_{p,W}(\Omega)\right)
=\NN_{x_0}\left(B,L_{p,W}(\Omega)\right)$
with a special point $x_0$ on the
boundary of $\Omega$.

The next step of the proofs of
Theorems \ref {T1.2} and \ref {T1.3}
(and Theorem \ref {T1.3a} as well)
is based
on equalities between
the sharp constants on the cube $Q^m$
and the ball $B^m$ discussed
in Sections  \ref{S3an} and \ref{S4n}.
In particular, an important role in the proofs of
such the equalities from Section
\ref{S4n} plays Proposition \ref{P4.1} of
independent interest that discusses
the representation of a polynomial that is invariant
under certain rotation subgroups.
Finally, Theorem \ref {T1.2} is reduced to
Theorem \ref {T1.2a} and Theorem \ref {T1.3}
is reduced to
known relations \eqref{E1.6c}.

\section{Extremal Polynomials
and Generalized Translation Operators}
\label{S2n}
\setcounter{equation}{0}
\noindent
Let $\Omega$ be a compact subset of $\R^m$, and let
 $B$ be a finite-dimensional subspace of continuous
functions from $L_{p,W}(\Omega)$ whose elements
are called polynomials. Recall that
$\NN\left(B,L_{p,W}(\Omega)\right)$ is defined by \eqref{E1.3}.

We say that $P^*\in B\setminus\{0\}$
is \emph{an extremal polynomial for}
$\NN\left(B,L_{p,W}(\Omega)\right)$ if
\ba
\NN\left(B,L_{p,W}(\Omega)\right)
=
\frac{\| P^*\|_{L_\iy(\Omega)}}
{\|P^*\|_{L_{p,W}(\Omega)}}.
\ea

In this section we discuss an important property
of certain extremal polynomials for \linebreak
$\NN\left(B,L_{p,W}(\Omega)\right)$
 that the uniform norm of these polynomials
is attained at special points on the boundary of $\Omega$.
We shall use this result for the proofs
of Theorems \ref{T1.2}, \ref{T1.3}, and \ref{T1.3a}
 in the cases of
$B=\PP_{aV},\,a\ge 0,\,\Omega=Q^m$, and
$B=\PP_{n,m},\,n\in\Z^1_+,\,\Omega=\BB^m$.
\vspace{.12in}\\
\textbf{General Result.}
Let $\Omega\subset \R^m ,\,X_0\subseteq \Omega$, and $G\subset \R^l$
($m\in\N,\,l\in\N$) be compact sets.

We assume that there exists a function (operation)
$y=O(t,x):G\times \Omega\to\Omega$, either satisfying
 the \emph{strong surjective condition} (SSC), that is,
for any $y_0\in\Omega$ there exist $t_0\in G$  and $x_0\in X_0$ such that
$O(t_0,x_0)=y_0$,
or  satisfying
 the \emph{very strong surjective condition} (VSSC), that is,
for any $y_0\in\Omega$ and $x_0\in X_0$ there exists $t_0\in G$   such that
$O(t_0,x_0)=y_0$.

We also assume that there exists a family of operators
$\left\{T_t(P,\Omega)\right\}_{t\in G}$,
satisfying the following conditions:
\bna
&&T_t(P,\Omega): B\to B,\qquad t\in G;
\label{E2.1n}\\
&&T_t(P,\Omega)(x_0)=P(O(t,x_0)),\qquad t\in G,\quad x_0\in X_0,
\quad P\in B;\label{E2.2n}\\
&&\left\|T_t(P,\Omega)\right\|_{L_{p,W}(\Omega)}
\le \left\|P\right\|_{L_{p,W}(\Omega)},\qquad t\in G,
\quad P\in B,\quad p\in(0,\iy).\label{E2.3n}
\ena
Various examples of $B,\,\Omega,\,X_0,\,G,\,
O(t,x)$, and $T_t(P,\Omega)$ are discussed below in
Examples \ref{Ex2.2}--\ref{Ex2.6}, \ref{Ex2.8},
and \ref{Ex2.10}.
The following general result is valid:

\begin{lemma}\label{L2.1n}
Let conditions \eqref{E2.1n}, \eqref{E2.2n}, and \eqref{E2.3n}
on $\left\{T_t(P,\Omega)\right\}_{t\in G}$ be satisfied.\\
(a) If an operation $O(t,x)$ satisfies the SSC,
then there exist $x_0\in X_0$ and an extremal polynomial
$P^*\in B\setminus\{0\}$ for $\NN\left(B,L_{p,W}(\Omega)\right)$
such that
$\|P^*\|_{L_\iy(\Omega)}=\vert P^*(x_0)\vert$.\\
(b) If an operation $O(t,x)$ satisfies the VSSC,
then given $x_0\in X_0$,  there exists an extremal polynomial
$P^*\in  B\setminus\{0\}$ for $\NN\left(B,L_{p,W}(\Omega)\right)$
 such that
$\|P^*\|_{L_\iy(\Omega)}=\vert P^*(x_0)\vert$.
\end{lemma}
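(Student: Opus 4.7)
The plan is to take any extremal polynomial $\tilde P$ for $\NN(B,L_{p,W}(\Omega))$, pick a point $y^*\in\Omega$ where $|\tilde P|$ attains its uniform norm, invoke the surjective property of $O$ to write $y^* = O(t_0,x_0)$ for a suitable pair $(t_0,x_0)$, and set $P^* := T_{t_0}(\tilde P,\Omega)$. Conditions \eqref{E2.1n}--\eqref{E2.3n} combined with the definition \eqref{E1.3} of $\NN$ then force a chain of inequalities to collapse to equalities, producing an extremal $P^*$ whose uniform norm is attained at $x_0$.

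First I would verify existence of an extremal polynomial. Since $B$ is finite dimensional and $\|\cdot\|_{L_{p,W}(\Omega)}$ is a nondegenerate (quasi)norm on $B$ (implicit in $\NN(B,L_{p,W}(\Omega))$ being a finite positive number), the ratio $\|P\|_{L_\iy(\Omega)}/\|P\|_{L_{p,W}(\Omega)}$ is continuous and scale invariant on $B\setminus\{0\}$. By equivalence of all (quasi)norms on a finite-dimensional space and compactness of the Euclidean unit sphere of $B$, this ratio attains its supremum at some $\tilde P\in B\setminus\{0\}$. Continuity of $\tilde P$ and compactness of $\Omega$ then produce $y^*\in\Omega$ with $|\tilde P(y^*)|=\|\tilde P\|_{L_\iy(\Omega)}$.

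For part (a), the SSC supplies $t_0\in G$ and $x_0\in X_0$ with $O(t_0,x_0)=y^*$; for part (b) (reading ``BSSC'' as a typo for VSSC), given the prescribed $x_0\in X_0$ the VSSC supplies $t_0\in G$ with the same equality. In either case set $P^* := T_{t_0}(\tilde P,\Omega)$. By \eqref{E2.1n} we have $P^*\in B$, and by \eqref{E2.2n},
\[
|P^*(x_0)| = |\tilde P(O(t_0,x_0))| = |\tilde P(y^*)| = \|\tilde P\|_{L_\iy(\Omega)}.
\]
Combining the trivial bound $|P^*(x_0)|\le\|P^*\|_{L_\iy(\Omega)}$, the definition \eqref{E1.3} of $\NN$, and the contractivity \eqref{E2.3n} gives
\begin{equation*}
\begin{split}
\|\tilde P\|_{L_\iy(\Omega)} &= |P^*(x_0)| \le \|P^*\|_{L_\iy(\Omega)} \le \NN(B,L_{p,W}(\Omega))\,\|P^*\|_{L_{p,W}(\Omega)}\\
&\le \NN(B,L_{p,W}(\Omega))\,\|\tilde P\|_{L_{p,W}(\Omega)} = \|\tilde P\|_{L_\iy(\Omega)}.
\end{split}
\end{equation*}
Equality must therefore hold throughout, which simultaneously yields $\|P^*\|_{L_\iy(\Omega)}/\|P^*\|_{L_{p,W}(\Omega)} = \NN(B,L_{p,W}(\Omega))$, so that $P^*$ is extremal, together with $\|P^*\|_{L_\iy(\Omega)}=|P^*(x_0)|$.

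The argument is essentially formal once the three operator properties are in place. The real obstacle is not the lemma itself but the subsequent construction of generalized translation operators $T_t$ on the concrete spaces $\PP_{aV}\subset C(Q^m)$ and $\PP_{n,m}\subset C(\BB^m)$ and the verification that they satisfy \eqref{E2.1n}--\eqref{E2.3n} together with an appropriate surjective condition for $O$, which is the content of the examples referenced in the introduction.
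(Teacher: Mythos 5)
Your proposal is correct and follows essentially the same route as the paper: obtain an extremal $\tilde P$ by compactness, locate a maximizing point $y^*$, use the surjectivity of $O$ to write $y^*=O(t_0,x_0)$, and apply $T_{t_0}$ to transport the maximizer to $x_0$ while not increasing the $L_{p,W}$-norm. The only cosmetic difference is that the paper concludes directly from $\|P_0\|_{L_\iy}/\|P_0\|_{L_{p,W}}\le |P^*(x_0)|/\|P^*\|_{L_{p,W}}$, whereas you spell out the full collapsing chain of inequalities; both are the same argument, and your reading of ``BSSC'' as the paper's VSSC is the intended one.
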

\begin{proof}
The set $\Omega$ is compact and $B$ is a
finite-dimensional subspace of continuous
functions from $L_{p,W}(\Omega)$.
Hence the existence of an extremal polynomial $P_0$
for $\NN\left(B,L_{p,W}(\Omega)\right)$
 can be proved by
the standard compactness argument. Indeed,
given $d\in\N$, let $P_{d}\in B$
satisfy the following
relations:
$
\|P_{d}\|_{L_\iy\left(\Omega\right)}=1
$
and
\ba
\NN\left(\PP_{aV},L_{p,W}(\Omega)\right)
<\|P_{d}\|_{L_\iy\left(\Omega\right)}
/\|P_{d}\|_{L_p\left(\Omega\right)} +1/d.
\ea
Then there exist a nontrivial polynomial
$P_0\in B$ and a sequence of polynomials
$\{P_{d_l}\}_{l=1}^\iy\subseteq\N$ such that
$\lim_{l\to\iy}P_{d_l}(x)=P_0(x)$
uniformly on $\Omega$. Thus $P_0$ is
an extremal polynomial  for $\NN\left(B,L_{p,W}
(\Omega)\right)$.

Next, assume that
\beq\label{E2.4n}
\|P_{0}\|_{L_\iy\left(\Omega\right)}
=\vert P_0(y_0)\vert,\qquad y_0\in\Omega.
\eeq
If $O(t,x)$ satisfies the SSC and $y_0\notin X_0$,
then there exist $t_0\in G$ and $x_0\in X_0$ such that $O(t_0,x_0)=y_0$.
If $O(t,x)$ satisfies the VSSC and $y_0\ne x_0$ for a given $x_0\in X_0$,
then there exists $t_0\in G$ such that $O(t_0,x_0)=y_0$.

Therefore, the function
$P^*:=T_{t_0}(P_0,\Omega)$ belongs to $B$ by
\eqref{E2.1n}, and $P^*$ satisfies the condition
\beq\label{E2.5n}
\vert P^*(x_0)\vert=\|P_{0}\|_{L_\iy\left(\Omega\right)}
\eeq
by \eqref{E2.2n} and \eqref{E2.4n}.
Furthermore, by \eqref{E2.3n} and \eqref{E2.5n},
${\| P_0\|_{L_\iy(\Omega)}}/
{\|P_0\|_{L_p(\Omega)}}
\le
\vert P^*(x_0)\vert/
{\|P^*\|_{L_p(\Omega)}}.$
Therefore, $P^*$ is
an extremal polynomial for $\NN\left(B,L_{p,W}(\Omega)\right)$ and
$\|P^*\|_{L_\iy(\Omega)}=\vert P^*(x_0)\vert$. Thus the lemma is established.
\end{proof}
\noindent
\textbf{Examples.}
In the capacity of sets $B,\,\Omega,\,X_0,\,G$
and the operation $O(t,x)$
we use in this paper the following objects:
\begin{example}\label{Ex2.2}
$B=\PP_n,\,n\in\Z^1_+,\,
\Omega=[-1,1],\,X_0=\{-1,1\},\,G=[-1,1],\,l=m=1,\,
O(t,x)=tx,\,t\in [-1,1],\,x\in [-1,1]$.
The operation $O(t,x)$ satisfies the VSSC since
given $y_0,\, \left\vert y_0\right\vert\le 1$, and given
$x_0,\, \left\vert x_0\right\vert=1$, one can choose
$t_0=y_0\, \mbox{sgn}\, x_0$.
\end{example}
\begin{example}\label{Ex2.3}
$B=\PP_{aV},\,a\ge 0,\,
\Omega=Q^m,\,X_0=\{x\in Q^m:\vert x_j\vert=1,1\le j\le m\},\,
G=Q^m,\,l=m,\,
O(t,x)=(t_1x_1,\ldots, t_mx_m),\,t\in Q^m,\,x\in Q^m$.
The operation $O(t,x)$ satisfies the VSSC since
given $y_0=\left\{y_{0,1},\ldots,y_{0,m}\right\},\,
 \left\vert y_{0,j}\right\vert\le 1,\,1\le j\le m$
 and given $x_0=\left\{x_{0,1},\ldots,x_{0,m}\right\},\,
 \left\vert x_{0,j}\right\vert= 1,\,1\le j\le m$,
 one can choose
$t_0=\left\{y_{0,1}\, \mbox{sgn}\, x_{0,1},\ldots,
y_{0,m}\, \mbox{sgn}\, x_{0,m}\right\}$.
\end{example}
\begin{example}\label{Ex2.4}
$B=\PP_{n,m},\,n\in\Z^1_+,\,
\Omega=\BB^m,\,X_0=S^{m-1},\,
G=[-1,1],\,l=1,\,
O(t,x)=t x,\,t\in [-1,1],\,x\in \BB^m$.
The operation $O(t,x)$ satisfies the SSC since
given $y_0,\, \left\vert y_0\right\vert\le 1$,
one can choose
$t_0=\pm\left\vert y_0\right\vert$ and
$x_0=\pm y_0/\left\vert y_0\right\vert$, if $y_0\ne 0$, and
$x_0\in S^{m-1}$, if $y_0=0$.
\end{example}
\noindent
\textbf{Generalized Translation Operators.}
In the capacity of the family of operators $T_t(P,\Omega)$ from Lemma
\ref{L2.1n} we use \emph{generalized translation operators} (GTOs).
The GTOs are linear operators of the form
\ba
T_t(f,\Omega)(x)=\int_{\Omega^*}f(\Phi(t,x,s))d\mu(s),
\qquad t\in G\subset \R^l,\quad x\in\Omega\subset\R^m,
\ea
where $\mu$ is a probability measure on
$\Omega^*\subset \R^\nu,\,\nu\in\N$. Note that mostly
$\Omega^*=\Omega$ but there are exceptions (see Examples
\ref{Ex2.5} and \ref{Ex2.8}).
The GTOs $T_t(\cdot,\Omega)$ are generated by product formulae for orthogonal
 polynomials on $\Omega$ with respect to a weighted measure.

 Since the 1960s the numerous GTOs have been defined, studied,
 and applied to univariate and multivariate approximation by
 algebraic polynomials on various domains and surfaces
 (see, e.g., \cite{BBP1968, BSW1980, LN1983, G1998, X2005a, X2005}
 and references therein).
 Applications of the GTOs to analysis of sharp constants in univariate
 inequalities
 of different metrics were initiated by Arestov, Deikalova,
  and Rogozina \cite{DR2012, AD2014, AD2015}.

  Examples of the Gegenbauer-type multivariate GTOs that are needed
  for the proofs of Theorems
  \ref{T1.2}, \ref{T1.3}, and \ref{T1.3a}
  are presented in Examples \ref{Ex2.6},
  \ref{Ex2.8}, and \ref{Ex2.10}. Note that the GTO from
  Example \ref{Ex2.6} is a new one.
  We also discuss the univariate Gegenbauer GTO in Example \ref{Ex2.5}
  to illustrate the transition to multivariate ones.
  This GTO is a special case of more complicated Examples \ref{Ex2.6},
  \ref{Ex2.8}, and \ref{Ex2.10}.

  \begin{example}\label{Ex2.5}
  \textbf{Univariate Gegenbauer GTO.}
  We first define sets
  $\Omega=[-1,1],\,X_0=\{-1,1\},\linebreak
  G=[-1,1]$ and the operation
$O(t,x)=tx,\,t\in [-1,1],\,x\in [-1,1]$, from Example \ref{Ex2.2}
with $O(t,x)$, satisfying the VSSC.
In addition, let $B=\PP_n,\,n\in\Z^1_+$.
Next, given $\la\ge 0$
we define the weight $W(x):=\left(1-x^2\right)^{\la-1/2},\,x\in[-1,1]$,
and the probability measure $\mu_{1,\la}(s)$
 on $[-1,1]$ by the formula
\beq\label{E2.6n}
d\mu_{1,\la}(s):=\left\{\begin{array}{ll}
(1/C_1(\la))\left(1-s^2\right)^{\la-1}ds,& \la>0,\\
(1/2)d(\de_1(s)+\de_{-1}(s)), &\la=0,
\end{array}\right.
\eeq
where
$C_1(\la):=\pi^{1/2}\Gamma(\la)/\Gamma(\la+1/2)$ and
$\de_b$ is the Dirac measure centered at $b\in[-1,1]$.

Furthermore, we define the GTO by the formula
\bna\label{E2.7n}
T_t(f,[-1,1])(x)
&:=&\left\{\begin{array}{ll}
\frac{1}{C_1(\la)}
\int_{-1}^1
f\left(tx+s\sqrt{1-t^2}\sqrt{1-x^2}\right)(1-s^2)^{\la-1}ds,
& \la>0,\\
\frac{1}{2}\left(
f\left(tx+\sqrt{1-t^2}\sqrt{1-x^2}\right)
+f\left(tx-\sqrt{1-t^2}\sqrt{1-x^2}\right)\right),&\la=0,
\end{array}\right.\nonumber\\
&=&\int_{-1}^1
f\left(tx+s\sqrt{1-t^2}\sqrt{1-x^2}\right)d\mu_{1,\la}(s),
\ena
where $t\in[-1,1]$ and $x\in[-1,1]$.
Note that GTO \eqref{E2.7n} is generated by the classic
product formula for the Gegenbauer polynomials $C_n^\la,\,
n\in\Z^1_+,\,\la\ge 0$,
\beq\label{E2.8n}
T_t\left(C_n^\la,[-1,1]\right)(x)=C_n^\la(t)C_n^\la(x)/C_n^\la(1),
\eeq
see \cite[Sect. 11.5]{W1944} for $\la>0$,
while for $\la=0$ product formula \eqref{E2.8n} for the
Chebyshev polynomials is trivial.

Next, $T_t(\cdot,[-1,1]):L_{p,W}([-1,1])\to L_{p,W}([-1,1]),\,
1\le p\le\iy$, is a linear operator whose norm is $1$ for all
 $t\in[-1,1]$ (see \cite[Lemmas 5, 6]{AD2015}).
 This fact immediately implies condition \eqref{E2.3n},
 while condition \eqref{E2.2n} follows from \eqref{E2.7n}.
 In addition, condition \eqref{E2.1n} is satisfied as well
 since by \eqref{E2.8n}, $T_t(P,[-1,1])\in\PP_{n}$ if
 $P\in\PP_{n},\,n\in\Z^1_+$.

 Therefore, applying Lemma \ref{L2.1n} (b),
 we arrive at the existence of an extremal polynomial
 for $\NN
 \left(\PP_n,L_{p,(1-x^2)^{\la-1/2}}([-1,1])\right),
 \,p\in[1,\iy),\,\la\ge 0,$
 whose uniform norm is attained at
 a fixed endpoint of $[-1,1]$.
  This result was obtained by Arestov and Deikalova
  \cite[Theorem 1]{AD2015} by the ingenious trick of
  using GTO \eqref{E2.7n}.
\end{example}

 \begin{example}\label{Ex2.6}
  \textbf{Gegenbauer-type GTO on a Cube.}
  We first define sets
  $\Omega=Q^m,\,X_0=\{x\in Q^m:\vert x_j\vert=1,1\le j\le m\},\,
G=Q^m$ and the operation
$O(t,x)=(t_1x_1,\ldots, t_mx_m),\,t\in Q^m,\,x\in Q^m$,
 from Example \ref{Ex2.3} with $O(t,x)$, satisfying the VSSC.
 In addition, let $B=\PP_{aV},\,a\ge 0$,
 where $V$ satisfies the $\Pi$-condition.
Next, given $\mathbf{\la}=(\la_1,\ldots,\la_m),\,\la_j\ge 0,\,1\le j\le m,$
  we define the weight
$W(x):=\prod_{j=1}^m\left(1-x_j^2\right)^{\la_j-1/2},\,x\in Q^m$,
and the probability measure $\mu_{m,\mathbf{\la}}(s)$ on $Q^m$ by the formula
$d\mu_{m,\mathbf{\la}}(s):=\prod_{j=1}^md\mu_{1,\la_j}(s_j)$,
where $d\mu_{1,\la_j}(s_j),\,1\le j\le m,$ is
defined by \eqref{E2.6n}.

Furthermore, we define the GTO by the formula
\bna\label{E2.9n}
&&T_t(f,Q^m)(x)
:=\prod_{j=1}^mT_{t_j}\left(f\left(\ldots,y_j,\ldots\right),
[-1,1]\right)\left(x_j\right)
\nonumber\\
&&=\int_{Q^m}
f\left(t_1x_1+s_1\sqrt{1-t_1^2}\sqrt{1-x_1^2},\ldots,
t_mx_m+s_m\sqrt{1-t_m^2}\sqrt{1-x_m^2}\right)d\mu_{m,
\boldsymbol{\la}}(s),
\ena
where $t\in Q^m,\,x\in Q^m$,
and $T_{t_j}\left(h\left(y_j\right),
[-1,1]\right)\left(x_j\right)
=T_{t_j}\left(h,[-1,1]\right)\left(x_j\right),\,
1\le j\le m,$ is defined
by \eqref{E2.7n}.
Note that GTO \eqref{E2.9n} is generated by the
product formula for the multivariate Gegenbauer-type
 polynomials
 $\mathbf{C}_\mathbf{n}^\mathbf{\la}(y):=
 \prod_{j=1}^mC_{n_j}^{\la_j}(y_j),\,
n_j\in\Z^1_+,\,\la_j\ge 0,\,1\le j\le m$,
\beq\label{E2.10n}
T_t\left(\mathbf{C}_\mathbf{n}^\mathbf{\la},Q^m\right)(x)
=\mathbf{C}_\mathbf{n}^\mathbf{\la}(t)
\mathbf{C}_\mathbf{n}^\mathbf{\la}(x)/\prod_{j=1}^mC_{n_j}^{\la_j}(1),
\eeq
which follows from \eqref{E2.7n}, \eqref{E2.8n}, and \eqref{E2.9n}.

Next, $T_t(\cdot,Q^m):L_{p,W}(Q^m)\to L_{p,W}(Q^m),\,
1\le p\le\iy$, is a linear operator whose norm is $1$ for all
 $t\in Q^m$. This result follows from the univariate one
 \cite[Lemmas 5, 6]{AD2015}
 and \eqref{E2.9n} by standard induction on $m$.
 Thus condition \eqref{E2.3n} is satisfied,
 while condition \eqref{E2.2n} immediately follows from \eqref{E2.9n}.

 In addition, $T_t(P,Q^m)\in\PP_{aV}$ if
 $P\in\PP_{aV},\,a\ge 0$. Indeed, let
 \ba
 c_k y^k=\sum_{0\le n_j\le k_j,\,1\le j\le m}
 d_{\mathbf{n}}\mathbf{C}_\mathbf{n}^\mathbf{\la}(y),
 \qquad k \in aV\cap\Z^m_+,
 \ea
  be a monomial of $P$. Then using product formula \eqref{E2.10n},
  we see that $T_t(c_k y^k,Q^m)(x)$ is a polynomial
  of degree at most $k_j$ in variable $x_j,\,1\le j\le m$.
  Since the convex set $V$ satisfies the $\Pi$-condition,
  this polynomial belongs to $\PP_{aV}$. Then $T_t(P,Q^m)\in\PP_{aV}$
  because $T_t(\cdot,Q^m)$ is a linear operator.
  Thus condition \eqref{E2.1n} is satisfied.

  Therefore, applying Lemma \ref{L2.1n} (b),
 we arrive at the following lemma:
 \begin{lemma}\label{L2.7}
  There exists an extremal polynomial $P^*$ for
  $\NN\left(\PP_{aV},
  L_{p,\prod_{j=1}^m\left(1-x_j^2\right)^{\la_j-1/2}}(Q^m)\right),
  p\in[1,\iy),\,\la_j\ge 0,\,1\le j\le m,$
   whose uniform norm is attained at
 a fixed vertex $x_0$ of $Q^m$, that is,
 $\|P^*\|_{L_\iy(Q^m)}=\vert P^*(x_0)\vert$ for a given
 $x_0\in X_0$.
\end{lemma}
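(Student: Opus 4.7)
The plan is to apply Lemma \ref{L2.1n} (b) directly, having already set up in Example \ref{Ex2.6} the sets $\Omega=Q^m$, $X_0=\{x\in Q^m:|x_j|=1,\,1\le j\le m\}$, $G=Q^m$, the operation $O(t,x)=(t_1x_1,\ldots,t_mx_m)$, and the family of operators $\{T_t(\cdot,Q^m)\}_{t\in G}$ defined by \eqref{E2.9n}. So the task is simply to check that all the hypotheses of Lemma \ref{L2.1n} (b) are in place for the choice $B=\PP_{aV}$ and $W(x)=\prod_{j=1}^m(1-x_j^2)^{\la_j-1/2}$.

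First, I would confirm that $O$ satisfies the VSSC: given any target $y_0\in Q^m$ and any preassigned vertex $x_0\in X_0$, the vector $t_0=\bigl(y_{0,1}\,\mathrm{sgn}\,x_{0,1},\ldots,y_{0,m}\,\mathrm{sgn}\,x_{0,m}\bigr)$ lies in $Q^m$ and satisfies $O(t_0,x_0)=y_0$. Next I would verify the three operator conditions. For \eqref{E2.2n}, observe that when $x=x_0\in X_0$ each factor $\sqrt{1-x_{0,j}^2}$ vanishes, so inside the integrand of \eqref{E2.9n} the argument of $f$ collapses to $(t_1x_{0,1},\ldots,t_mx_{0,m})=O(t,x_0)$ and the probability measure integrates to $1$. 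For \eqref{E2.3n}, I would invoke the univariate norm-one property of $T_{t_j}(\cdot,[-1,1])$ proved by Arestov--Deikalova \cite[Lemmas 5, 6]{AD2015} and iterate coordinate-by-coordinate using Fubini and the tensor-product structure of $W$ and $d\mu_{m,\lambda}$. For \eqref{E2.1n}, I would expand each monomial $c_k y^k$ with $k\in aV\cap\Z^m_+$ in the Gegenbauer basis $\mathbf{C}_{\mathbf{n}}^{\boldsymbol{\lambda}}$ with $0\le n_j\le k_j$, apply the product formula \eqref{E2.10n} to conclude that $T_t(c_ky^k,Q^m)(x)$ has degree at most $k_j$ in $x_j$, and then use the $\Pi$-condition on $V$ to deduce that its Newton polyhedron still sits inside $aV$; linearity of $T_t(\cdot,Q^m)$ then gives $T_t(P,Q^m)\in\PP_{aV}$ for every $P\in\PP_{aV}$.

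With \eqref{E2.1n}--\eqref{E2.3n} and the VSSC confirmed, Lemma \ref{L2.1n} (b) applied with any prescribed vertex $x_0\in X_0$ yields an extremal polynomial $P^*\in\PP_{aV}\setminus\{0\}$ for $\NN\bigl(\PP_{aV},L_{p,W}(Q^m)\bigr)$ satisfying $\|P^*\|_{L_\infty(Q^m)}=|P^*(x_0)|$, which is exactly the claim of Lemma \ref{L2.7}.

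The only step that is more than a direct transcription is the stability \eqref{E2.1n}, and the only non-cosmetic ingredient there is the $\Pi$-condition on $V$: without it, expanding $y^k$ as a linear combination of lower-degree Gegenbauer products could in principle produce monomials whose index vectors leave $aV$. The $\Pi$-condition guarantees that whenever $k\in aV$ every $\mathbf{n}$ with $0\le n_j\le k_j$ also lies in $aV$, so this is precisely the obstacle the hypothesis is designed to remove; everything else is bookkeeping over the univariate Gegenbauer case.
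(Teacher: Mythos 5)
Your proposal is correct and follows essentially the same route as the paper: Example \ref{Ex2.6} carries out exactly the verification you outline (the VSSC for $O$, the collapse of the integrand in \eqref{E2.9n} at vertices giving \eqref{E2.2n}, the tensor-product/induction argument for the norm-one property in \eqref{E2.3n}, and the Gegenbauer-expansion plus product-formula plus $\Pi$-condition argument for \eqref{E2.1n}), after which Lemma \ref{L2.7} is obtained by invoking Lemma \ref{L2.1n}(b). The only cosmetic discrepancy is that the statement of Lemma \ref{L2.1n}(b) writes ``BSSC'' where the definition and proof make clear it means VSSC; you correctly read it as VSSC.
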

\end{example}

\begin{example}\label{Ex2.8}
  \textbf{Chebyshev-type GTO on the Unit Ball.}
  We first define sets
  $\Omega=\BB^m,\,X_0=S^{m-1},\,G=[-1,1]$ and the operation
$O(t,x)=tx,\,t\in [-1,1],\,x\in \BB^m$, from Example \ref{Ex2.4}
 with $O(t,x)$, satisfying the SSC.
In addition, let $B=\PP_{n,m},\,n\in\Z^1_+$.
Next,
we define the weight $W(x)
:=\left(1-\vert x\vert^2\right)^{-1/2},\,x\in \BB^m$,
and the probability measure $\mu_{m,0}^*(s)$ on
$S^{m-1}$ by the formula
$
d\mu_{m,0}^*(s):=
(1/C_2(m))ds,
$
where $ds$ is the $(m-1)$-dimensional surface element of
$S^{m-1}$ and
$C_2(m):=\left\vert S^{m-1}\right\vert_{m-1}=2\pi^{m/2}/\Gamma(m/2)$.

Furthermore, we define the GTO by the formula
\beq\label{E2.11n}
T_t(f,\BB^m)(x)
:=\int_{S^{m-1}}
f\left(tx+\sqrt{1-t^2}s D(x)H(x)\right)d\mu_{m,0}^*(s),
\eeq
where $t\in[-1,1]$ and $x\in \BB^m$. Here, $s$ and $x$ are
$m$-dimensional row vectors, $D(x)$ is the $m\times m$ diagonal matrix
whose diagonal elements are $\sqrt{1-\vert x\vert^2},\,1,\ldots, 1$,
and $H(x)$ is an $m\times m$ orthogonal matrix
whose first row is $x/\vert x\vert$.

GTO \eqref{E2.11n} was defined by the author \cite{G1989, G1998}
and it is generated by the product formula for a system of polynomials
(eigenfunctions of $T_t(f,\BB^m)$)
\beq\label{E2.12n}
 \Phi_{l,N}(x):=P_l(x)C_{2N}^{l+(m-1)/2}
 \left(\sqrt{1-\vert x\vert^2}\right),\,
 x\in \BB^m,\,l\in\Z^1_+,\,N\in\Z^1_+,
 \eeq
 in the form
\beq\label{E2.13n}
T_t\left(\Phi_{l,N},\BB^m\right)(x)
=C_n^{(m-1)/2}(t)\Phi_{l,N}(x)/C_n^{(m-1)/2}(1),
\qquad n=l+2N,
\eeq
(see \cite[Lemma 2]{G1998}). Here, $P_l$ is a spherical
harmonic of degree $l,\,l\in\Z^1_+$. In addition, the system
$\{\Phi_{l,N}\}_{l+2N\le n}$ forms an orthogonal basis for
$\PP_{n,m}$ on $\BB^m$ with respect to the weight $W$, that is,
 \beq\label{E2.14n}
 P(x)=\sum_{0\le l+2N\le n}c_{l,N}\Phi_{l,N}(x),\qquad P\in\PP_{n,m},
 \eeq
(see \cite[Lemma 1]{G1998}).

Note that formula \eqref{E2.11n} for the Chebyshev-type GTO on $\BB^m$
follows from the classic formula for the GTO on the sphere $S^{m}$
(see, e.g., \cite{W1981, R1994}) since
 \bna\label{E2.15n}
 T_t(f,\BB^m)(x)=T_t\left(F,S^{m}\right)(x)
 &:=&(C_2(m))^{-1}(1-t^2)^{-(m-1)/2}
 \int_{\left(\tilde{x},\tilde{y}\right)=t}F\left(\tilde{y}\right)
 d\tilde{y}\nonumber\\
 &=&(C_2(m))^{-1}\int_{S_x^{m-1}}
 F\left(t\tilde{x}+\sqrt{1-t^2}\tilde{s}\right) d\tilde{s},
 \ena
 where $\tilde{x}=(x,x_{m+1})\in S^m,\,\tilde{y}=(y,y_{m+1})\in S^m,\,
 \tilde{s}=(s,s_{m+1})\in S^m,\,
 F\left(\tilde{y}\right)=f(y),$ and $S_x^{m-1}$ is the intersection
 of $S^m$ and the equatorial hyperplane orthogonal to the vector
 $\tilde{x}$. Here $x,\,y$, and $s$ are points of $\BB^m$.

 Roughly speaking, the GTO $T_t(f,\BB^m)$ is the "projection"
 of the spherical GTO $T_t\left(F,S^{m}\right)$ on $\BB^m$.
 So all major properties of $T_t(f,\BB^m)$ follow from the corresponding
 well-studied properties of $T_t\left(F,S^{m}\right)$.

 In particular, $T_t(\cdot,\BB^m):L_{p,W}(\BB^m)\to L_{p,W}(\BB^m),\,
1\le p\le\iy$, is a linear operator whose norm is $1$ for all
 $t\in[-1,1]$. Indeed, it is known \cite[Lemma 4.2.2]{BBP1968} that
 for $F\in L_{p}(S^m), 1\le p\le\iy$,
 \beq\label{E2.16n}
 \left\|T_t(F,S^m)\right\|_{L_{p}(S^m)}
 \le \|F\|_{L_{p}(S^m)}.
 \eeq
 Choosing $F(\tilde{y})=f(y),\,\tilde{y}\in S^m, y\in \BB^m$,
 we obtain from \eqref{E2.15n} and \eqref{E2.16n}
 \beq\label{E2.17n}
 \left\|T_t(f,\BB^m)\right\|_{L_{p,W}(\BB^m)}
 =\left\|T_t(F,S^m)\right\|_{L_{p}(S^m)}
 \le \|F\|_{L_{p}(S^m)}=\left\|f\right\|_{L_{p,W}(\BB^m)}.
 \eeq
 Equality in \eqref{E2.17n} holds true for a constant $f$, so
 the norm of $T_t(\cdot,\BB^m)$ is 1.
Then condition \eqref{E2.3n} is satisfied,
 while condition \eqref{E2.2n} immediately follows from \eqref{E2.11n}.

 In addition to \eqref{E2.17n}, note that relations \eqref{E2.13n} and
 \eqref{E2.14n} for polynomials \eqref{E2.12n}
  follow from the corresponding properties of spherical
 harmonics (see \cite[Sect. 2]{LN1983} and
 \cite[Theorem 4.2.1]{SW1971}, respectively).
 Thus by \eqref{E2.13n} and
 \eqref{E2.14n}, $T_t(P,\BB^m)\in\PP_{n,m}$ if
 $P\in\PP_{n,m},\,n\in\Z^1_+$.
 Then condition \eqref{E2.1n} is satisfied.

 Therefore, applying Lemma \ref{L2.1n} (a),
 we arrive at the following lemma:
 \begin{lemma}\label{L2.9}
  There exists an extremal polynomial $P^*$
  for $\NN\left(\PP_{n,m},
  L_{p,\left(1-\vert x\vert^2\right)^{-1/2}}
  (\BB^m)\right),
  \,p\in[1,\iy),$
  whose uniform norm is attained on
 the sphere $S^{m-1}$, that is, there exists
 $x_0\in S^{m-1}$ such that
 $\|P^*\|_{L_\iy(\BB^m)}=\vert P^*(x_0)\vert$.
\end{lemma}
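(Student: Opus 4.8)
The plan is simply to invoke Lemma~\ref{L2.1n}(a) with the data assembled in Example~\ref{Ex2.8}. First I would recall the choice $B=\PP_{n,m}$, $\Omega=\BB^m$, $X_0=S^{m-1}$, $G=[-1,1]$, $l=1$, and $O(t,x)=tx$ from Example~\ref{Ex2.4}, where it is checked that $O$ satisfies the SSC: any $y_0\in\BB^m$ is of the form $O(t_0,x_0)$ with $t_0=\pm\vert y_0\vert\in[-1,1]$ and $x_0=\pm y_0/\vert y_0\vert\in S^{m-1}$ when $y_0\ne 0$, and with arbitrary $x_0\in S^{m-1}$ when $y_0=0$. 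Thus it remains only to exhibit a family $\{T_t(\cdot,\BB^m)\}_{t\in[-1,1]}$ verifying \eqref{E2.1n}--\eqref{E2.3n} relative to the weight $W(x)=(1-\vert x\vert^2)^{-1/2}$ on $\BB^m$.

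For this I would take the Chebyshev-type GTO $T_t(f,\BB^m)$ defined in \eqref{E2.11n}. Condition \eqref{E2.2n} is immediate: for $x_0\in S^{m-1}$ one has $\vert x_0\vert=1$, so the diagonal matrix $D(x_0)$ vanishes and \eqref{E2.11n} collapses to $T_t(f,\BB^m)(x_0)=f(tx_0)=f(O(t,x_0))$. Condition \eqref{E2.3n}, the contraction property $\Vert T_t(f,\BB^m)\Vert_{L_{p,W}(\BB^m)}\le\Vert f\Vert_{L_{p,W}(\BB^m)}$ for $p\in[1,\iy)$, follows by realizing $T_t(\cdot,\BB^m)$ as the projection \eqref{E2.15n} of the spherical GTO $T_t(\cdot,S^m)$ and applying the classical estimate \eqref{E2.16n} of \cite{BBP1968}; the key point is that under $\tilde y\mapsto y$ the weighted measure $W(x)\,dx$ on $\BB^m$ pulls back to the surface measure on $S^m$, which turns \eqref{E2.16n} into the norm identity \eqref{E2.17n}. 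Condition \eqref{E2.1n}, that $T_t$ maps $\PP_{n,m}$ into itself, follows from the eigenrelation \eqref{E2.13n} for the polynomials $\Phi_{l,N}$ of \eqref{E2.12n} together with the basis property \eqref{E2.14n}: expanding $P\in\PP_{n,m}$ as in \eqref{E2.14n} and applying $T_t$ term by term multiplies the $(l,N)$-coefficient by $C_n^{(m-1)/2}(t)/C_n^{(m-1)/2}(1)$ with $n=l+2N$, so the result again lies in $\PP_{n,m}$.

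Having verified \eqref{E2.1n}--\eqref{E2.3n} and the SSC for $O$, I would then apply Lemma~\ref{L2.1n}(a) directly: it furnishes a point $x_0\in S^{m-1}$ and an extremal polynomial $P^*\in\PP_{n,m}\setminus\{0\}$ for $\NN(\PP_{n,m},L_{p,(1-\vert x\vert^2)^{-1/2}}(\BB^m))$ with $\Vert P^*\Vert_{L_\iy(\BB^m)}=\vert P^*(x_0)\vert$, which is precisely the assertion of the lemma.

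The substantive content is not in this last deduction but in the two imported facts used to check \eqref{E2.3n} and \eqref{E2.1n}: the spherical contraction estimate \eqref{E2.16n} and the spectral decomposition \eqref{E2.12n}--\eqref{E2.14n} of $\PP_{n,m}$ on $\BB^m$ under $T_t(\cdot,\BB^m)$ from \cite{G1998}. Given these, the only thing one must be careful about is the measure-theoretic bookkeeping behind \eqref{E2.15n} and \eqref{E2.17n}, namely that the specific weight $(1-\vert x\vert^2)^{-1/2}$ is exactly the Jacobian that makes the projection from $S^m$ to $\BB^m$ an isometry of $L_p$ spaces; this is what forces the norm of $T_t(\cdot,\BB^m)$ to equal $1$ rather than merely be bounded, and hence what makes \eqref{E2.3n} hold with constant $1$ as required by Lemma~\ref{L2.1n}.
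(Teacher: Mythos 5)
Your overall route is the one the paper takes: deduce Lemma \ref{L2.9} by feeding the data of Example \ref{Ex2.4} together with the Chebyshev-type GTO \eqref{E2.11n} of Example \ref{Ex2.8} into Lemma \ref{L2.1n}(a). Your verification of the SSC, of \eqref{E2.1n} via the eigenrelation \eqref{E2.13n} and the basis property \eqref{E2.12n}--\eqref{E2.14n}, and of \eqref{E2.3n} via the projection \eqref{E2.15n} and the estimates \eqref{E2.16n}--\eqref{E2.17n} all agree with the paper.

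The problem is your verification of \eqref{E2.2n}, which is exactly the step on which the construction $P^*=T_{t_0}(P_0,\BB^m)$ in Lemma \ref{L2.1n} rests. For $x_0\in S^{m-1}$ the matrix $D(x_0)$ does \emph{not} vanish: its diagonal is $(0,1,\ldots,1)$, so $sD(x_0)H(x_0)=\sum_{j=2}^m s_jh_j$, where $h_2,\ldots,h_m$ are the rows of $H(x_0)$ orthogonal to $x_0$. Hence for $m\ge 2$ the integral \eqref{E2.11n} does not collapse to $f(tx_0)$; it is an average of $f$ over the $(m-1)$-dimensional set $\left\{tx_0+\sqrt{1-t^2}\,w:\ w\perp x_0,\ \vert w\vert\le 1\right\}$. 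Concretely, for $m=2$, $P(y)=y_2^2$, $t=0$, $x_0=(1,0)$ (with $H(x_0)=I_2$) one gets $T_0(P,\BB^2)(x_0)=\frac{1}{2\pi}\int_{S^1}s_2^2\,ds=1/2$, while $P(O(0,x_0))=P(0,0)=0$. The same failure is visible from the eigenrelation \eqref{E2.13n} that you yourself use for \eqref{E2.1n}: as a function of $t$, $T_t(\Phi_{l,N},\BB^m)(x_0)$ is proportional to $C_{l+2N}^{(m-1)/2}(t)$, whereas $\Phi_{l,N}(tx_0)=t^lP_l(x_0)C_{2N}^{l+(m-1)/2}\left(\sqrt{1-t^2}\right)$, and these already differ for $l=0$, $N=1$. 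So the pointwise identity \eqref{E2.2n} is not valid for all $P\in\PP_{n,m}$ when $m\ge2$ (your argument is correct only in the univariate case, where it reduces to Example \ref{Ex2.5} with $\la=0$), and the appeal to Lemma \ref{L2.1n}(a) is therefore not justified as written. This is precisely the point that Example \ref{Ex2.8} dispatches with the remark that \eqref{E2.2n} ``immediately follows from \eqref{E2.11n}''; the justification you supply, namely $D(x_0)=0$, is not a valid substitute, and a genuine additional argument is needed here --- for instance a correct substitute for \eqref{E2.2n} that still forces $\vert P^*(x_0)\vert=\Vert P_0\Vert_{L_\iy(\BB^m)}$ at a maximizer $y_0=t_0x_0$, rather than the literal evaluation identity.
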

\end{example}

\begin{example}\label{Ex2.10}
  \textbf{Gegenbauer-type GTO on the Unit Ball.}
  We first define sets
  $\Omega=\BB^m,\,X_0=S^{m-1},\,G=[-1,1]$ and the operation
$O(t,x)=tx,\,t\in [-1,1],\,x\in \BB^m$,
   from Example \ref{Ex2.4}  with $O(t,x)$, satisfying the SSC.
   In addition, let $B=\PP_{n,m},\,n\in\Z^1_+$.
Next, given $\la> 0$
we define the weight $W(x):=\left(1-\vert x\vert^2
\right)^{\la-1/2},\,x\in \BB^m$,
and the probability measure $\mu_{m,\la}^*(s)$
 on $\BB^m$ by the formula
$
d\mu_{m,\la}^*(s):=
(1/C_3(m,\la))(1-\vert s\vert^2)^{\la-1}ds,
$
where
$C_3(m,\la):=\pi^{m/2}\Gamma(\la)/\Gamma(\la+m/2)$.

Furthermore, we define the GTO by the formula
\beq\label{E2.18n}
T_t(f,\BB^m)(x)
:=\int_{\BB^m}
f\left(tx+\sqrt{1-t^2}s D(x)H(x)\right)d\mu_{m,\la}^*(s),
\eeq
where $t\in[-1,1]$ and $x\in \BB^m$. Here, $s$ and $x$ are
$m$-dimensional row vectors, and $D(x)$ and $H(x)$ are the same
matrices as in \eqref{E2.11n}.

GTO \eqref{E2.18n} was defined by  Xu \cite[Corollary 3.7]{X2005},
and it is generated by the
product formula for the generalized Gegenbauer polynomials
$C_n^{\la+(m-1)/2}((v,y)),\,
v\in \BB^m,\, y\in S^{m-1},\,n\in\Z^1_+,\,\la> 0$,
\beq\label{E2.19n}
T_t\left(C_n^{\la+(m-1)/2}((\cdot,y)),\BB^m\right)(x)
=C_n^{\la+(m-1)/2}(t)C_n^{\la+(m-1)/2}((x,y))/C_n^{\la+(m-1)/2}(1)
\eeq
for all $y\in S^{m-1}$ (see \cite[p. 500]{X2005}).

Next, $T_t(\cdot,\BB^m):L_{p,W}(\BB^m)\to L_{p,W}(\BB^m),\,
1\le p\le\iy$, is a linear operator whose norm is $1$ for all
 $t\in[-1,1]$ (see \cite[Proposition 3.4(5)]{X2005}).
 This fact immediately implies condition \eqref{E2.3n},
 while condition \eqref{E2.2n} follows from \eqref{E2.18n}.
 In addition, by \eqref{E2.19n}, $T_t(P,\BB^m)\in\PP_{n,m}$ if
 $P\in\PP_{n,m},\,n\in\Z^1_+$
 (see \cite[Proposition 3.4(3)]{X2005}). Then
 condition \eqref{E2.1n} is satisfied.

 Therefore, applying Lemma \ref{L2.1n} (a),
 we arrive at the following lemma:
 \begin{lemma}\label{L2.11}
  There exists an extremal polynomial $P^*$
  for $\NN\left(\PP_{n,m},
  L_{p,\left(1-\vert x\vert^2\right)^{\la-1/2}}(\BB^m)\right),
  \,p\in[1,\iy),\,\la>0,$
  whose uniform norm is attained on
 the sphere $S^{m-1}$, that is, there exists
 $x_0\in S^{m-1}$ such that
 $\|P^*\|_{L_\iy(\BB^m)}=\vert P^*(x_0)\vert$.
\end{lemma}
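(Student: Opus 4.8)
The plan is to obtain Lemma \ref{L2.11} as a direct application of Lemma \ref{L2.1n}(a) to the data assembled in Example \ref{Ex2.10}. First I would fix the objects $B=\PP_{n,m}$, $\Omega=\BB^m$, $X_0=S^{m-1}$, $G=[-1,1]$, and the operation $O(t,x)=tx$ supplied by Example \ref{Ex2.4}, and take for the family $\{T_t(\cdot,\Omega)\}_{t\in G}$ the Gegenbauer-type generalized translation operator on the ball given by \eqref{E2.18n}. The whole argument then reduces to verifying, for $p\in[1,\iy)$ and $\la>0$, the three structural hypotheses \eqref{E2.1n}--\eqref{E2.3n} together with the surjectivity hypothesis of part (a).

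For \eqref{E2.1n} I would invoke the product formula \eqref{E2.19n}: it exhibits each generalized Gegenbauer polynomial $C_n^{\la+(m-1)/2}((\cdot,y))$ as an eigenfunction of $T_t(\cdot,\BB^m)$, and since these polynomials span $\PP_{n,m}$ (as $y$ ranges over $S^{m-1}$ and $n$ over the admissible degrees), linearity gives $T_t(\PP_{n,m},\BB^m)\subseteq\PP_{n,m}$, which is the content of \cite[Proposition 3.4(3)]{X2005}. Condition \eqref{E2.2n}, that $T_t(P,\BB^m)(x_0)=P(tx_0)$ for $x_0\in S^{m-1}$, follows at once from the explicit integral representation \eqref{E2.18n} on restricting to $\vert x_0\vert=1$. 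Condition \eqref{E2.3n} is the contractivity $\|T_t(\cdot,\BB^m)\|_{L_{p,W}(\BB^m)\to L_{p,W}(\BB^m)}\le 1$, $1\le p\le\iy$, established in \cite[Proposition 3.4(5)]{X2005}; here $\la>0$ is used so that $\mu_{m,\la}^*$ is a genuine absolutely continuous probability measure on $\BB^m$. Finally, $O(t,x)=tx$ satisfies the strong surjective condition, since every $y_0\in\BB^m\setminus\{0\}$ equals $\vert y_0\vert\cdot(y_0/\vert y_0\vert)$ with $y_0/\vert y_0\vert\in S^{m-1}$, and $0=0\cdot x_0$ for any $x_0\in S^{m-1}$ --- precisely the computation recorded in Example \ref{Ex2.4}.

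With \eqref{E2.1n}--\eqref{E2.3n} and the SSC in place, Lemma \ref{L2.1n}(a) delivers a point $x_0\in S^{m-1}$ and an extremal polynomial $P^*\in\PP_{n,m}\setminus\{0\}$ for $\NN(\PP_{n,m},L_{p,(1-\vert x\vert^2)^{\la-1/2}}(\BB^m))$ with $\|P^*\|_{L_\iy(\BB^m)}=\vert P^*(x_0)\vert$, which is exactly the assertion of the lemma. The boundary value $\la=0$ excluded above is not lost: it is covered separately by the Chebyshev-type operator of Example \ref{Ex2.8}, yielding Lemma \ref{L2.9}.

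I do not expect a genuine obstacle here: Lemma \ref{L2.11} is a corollary, and its two substantive ingredients --- the product formula \eqref{E2.19n} and the $L_{p,W}$-contractivity of the translation operator --- are imported from Xu's work rather than proved afresh. The one point that needs care is the choice of part (a) of Lemma \ref{L2.1n} (which requires only the SSC) rather than part (b): on the ball the dilation $O(t,x)=tx$ does not satisfy the stronger surjectivity condition demanded by Lemma \ref{L2.1n}(b) --- for a prescribed $x_0\in S^{m-1}$ the set $\{tx_0:t\in[-1,1]\}$ is only a diameter of $\BB^m$, not all of it --- so, in contrast to the cube case of Lemma \ref{L2.7}, the extremal point $x_0$ can only be shown to exist, not prescribed in advance.
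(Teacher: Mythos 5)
Your proposal follows the paper's own route essentially verbatim: the paper proves Lemma \ref{L2.11} exactly by assembling the data of Examples \ref{Ex2.4} and \ref{Ex2.10} (Xu's operator \eqref{E2.18n}, the product formula \eqref{E2.19n} and \cite[Proposition 3.4(3)]{X2005} for \eqref{E2.1n}, \cite[Proposition 3.4(5)]{X2005} for \eqref{E2.3n}, the SSC from Example \ref{Ex2.4}) and then invoking Lemma \ref{L2.1n}(a); your closing remark that only part (a), not part (b), is available on the ball also matches the paper's usage.

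The one step you dispatch in a single line, however, is precisely the step that does not hold as you state it: condition \eqref{E2.2n}. Restricting \eqref{E2.18n} to $\vert x_0\vert=1$ does not collapse the integral to $P(tx_0)$, because $D(x_0)=\mathrm{diag}\,(0,1,\ldots,1)$ annihilates only the first coordinate of $s$; for $m\ge 2$ the operator $T_t(P,\BB^m)(x_0)$ is the $\mu_{m,\la}^*$-average of $P$ over the $(m-1)$-dimensional disc $\left\{tx_0+\sqrt{1-t^2}\,w:\ w\perp x_0,\ \vert w\vert\le 1\right\}$, not the single value $P(tx_0)$ (only for $m=1$ does the whole correction term vanish, which is why the Arestov--Deikalova argument of Example \ref{Ex2.5} goes through verbatim). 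In fact \eqref{E2.2n} is incompatible with the product formula \eqref{E2.19n} that you use for \eqref{E2.1n}: applying both to $P(x)=C_n^{\la+(m-1)/2}((x,y))$ at $x_0\in S^{m-1}$ would force $C_n^{\la+(m-1)/2}(ts)\,C_n^{\la+(m-1)/2}(1)=C_n^{\la+(m-1)/2}(t)\,C_n^{\la+(m-1)/2}(s)$ for all $t,s\in[-1,1]$ (with $s=(x_0,y)$), which already fails for $n=2$ at $t=s=0$. Since the proof of Lemma \ref{L2.1n}(a) uses exactly the identity $T_{t_0}(P_0,\Omega)(x_0)=P_0(y_0)$ to transport the maximal value of the extremal polynomial to the boundary (an average over a disc only gives $\vert T_{t_0}(P_0,\BB^m)(x_0)\vert\le\|P_0\|_{L_\iy(\BB^m)}$, which is the wrong direction), the verification of \eqref{E2.2n} is a genuine gap in your write-up. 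To be fair, the paper's Example \ref{Ex2.10} asserts the same thing in the same one-line fashion, so as a reconstruction of the paper's argument your proposal is faithful; but the claim that \eqref{E2.2n} ``follows at once'' from \eqref{E2.18n} would need to be repaired (some substitute boundary property of the GTO, or a different transport argument) before the application of Lemma \ref{L2.1n}(a) is legitimate.
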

\end{example}
\begin{remark}\label{R2.12}
Xu \cite{X2004, X2005a} introduced a weighted version
of the spherical GTO $T_t(F,S^m)$ defined by \eqref{E2.15n}.
Note that  GTO \eqref{E2.18n},
similarly to the Chebyshev-type GTO \eqref{E2.11n},
is the "projection"
of the weighted spherical GTO on $\BB^m$.
In addition, note that, in a sense, GTO \eqref{E2.11n} is
a limit version of GTO \eqref{E2.18n} as $\la\to 0+$
(cf. \cite[Sect. 3.3]{X2005}).
\end{remark}

\section{The $m$-dimensional Cube $Q^m$.
Proof of Theorem  \ref{T1.2a}}\label{S3n}
 \noindent
\setcounter{equation}{0}
Throughout the section we assume that
$V\subset\R^m$ satisfies the $\Pi$-condition and
$\al_j\ge 0,\,\be_j\ge -1/2,\,1\le j\le m$.
The proof of Theorem \ref{T1.2a} is based
on four lemmas below.
We start with three standard properties of
multivariate  entire functions
 of exponential type.

 \begin{lemma}\label{L3.2}
 (a) If $f\in B_V$, then there exists $M=M(V)>0$ such
 that $f\in B_{Q^m(M)}$.\\
 (b) The following crude Nikolskii-type inequalities hold true:
 \bna
   &&\left\|f\right\|_{L_{\iy}(\R^m)}
  \le C_4
  \left\|f\right\|_{L_{p}(\R^m)},\quad f\in B_V\cap L_p(\R^m),\quad
 p\in(0,\iy),\label{E3.4n}\\
 &&\left\|f\right\|_{L_{\iy}(\R^m)}
  \le C_5
  \left\|f\right\|_{L_{p,
  \prod_{j=1}^m\left\vert t_j\right\vert^{\al_j}}(\R^m)},
  \quad f\in B_V\cap L_{p,\prod_{j=1}^m\left\vert t_j
  \right\vert^{\al_j}}(\R^m),
  \quad
 p\in(0,\iy),\label{E3.5n}
  \ena
  where $C_4$ and $C_5$ are independent of $f$.\\
  (c) For any sequence $\{f_n\}_{n=1}^\iy,\,
f_n\in B_V\cap L_\iy(\R^m),\,n\in\N,$
with $\sup_{n\in\N}\| f_n\|_{L_\iy(\R^m)}= C$, there exist a subsequence
$\{f_{n_d}\}_{d=1}^\iy$ and a function $f_0\in B_V\cap L_\iy(\R^m)$
such that
$
\lim_{d\to\iy} f_{n_d}=f_0
$
uniformly on any compact set in $R^m$.
  \end{lemma}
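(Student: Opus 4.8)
The three parts are standard but I would organize them around the single reduction in part (a): passing from an arbitrary exponential-type body $V$ to a genuine cube $Q^m(M)$. For part (a), since $V$ is a bounded convex body, its polar $V^*$ contains a ball $\BB^m(1/M)$ for some $M>0$; equivalently, the dual norm satisfies $\|z\|^*_V \le M\sum_{j=1}^m|z_j| = M\,\|z\|^*_{Q^m(1)}$ for all $z\in\CC^m$ (here I use that $\|z\|^*_{Q^m(1)}=\sup_{|t_j|\le 1}|\sum t_jz_j|=\sum_j|z_j|$). Plugging this into Definition \ref{D1.1} shows that $|f(z)|\le C_0(\vep,f)\exp((1+\vep)M\|z\|^*_{Q^m(1)})$, i.e.\ $f\in B_{Q^m(M)}$; one also checks $\|z\|^*_{Q^m(M)}=M\|z\|^*_{Q^m(1)}$ so this is exactly the required membership. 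After this step I may and will assume $V=Q^m(M)$, so $f$ is of exponential type $M$ in each variable separately.

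For part (b), inequality \eqref{E3.4n} is the classical multivariate Nikolskii inequality for bandlimited functions. Working one variable at a time (legitimate after part (a)), the univariate statement $\|g\|_{L_\iy(\R)}\le C\|g\|_{L_p(\R)}$ for $g\in B_{[-M,M]}\cap L_p(\R)$ is standard --- for $p\ge 1$ it follows from the Plancherel--Polya / sampling inequalities, and for $0<p<1$ from the device of writing $g=g_1g_2$ with $g_i\in B_{[-M/2,M/2]}$ via a Fejér-type kernel and applying Hölder, or by directly invoking the known crude estimate. Iterating over the $m$ coordinates gives \eqref{E3.4n}. For the weighted estimate \eqref{E3.5n} I would split $\R^m = (\,[-1,1]^m\,)\cup(\R^m\setminus[-1,1]^m)$: on the region where some $|t_j|\ge 1$ the weight $\prod|t_j|^{\al_j}$ is bounded below by a constant depending only on which coordinates are large, so that piece of $\|f\|_{L_{p,\prod|t_j|^{\al_j}}}^p$ dominates a constant multiple of $\|f\|_{L_p(\R^m\setminus[-1,1]^m)}^p$; and on $[-1,1]^m$ I use that $f$, being bandlimited, already satisfies $\|f\|_{L_\iy([-1,1]^m)}\le C\|f\|_{L_p(2\cdot[-1,1]^m)}\le C'\|f\|_{L_{p,\prod|t_j|^{\al_j}}(\R^m)}$ after combining \eqref{E3.4n} applied to a translate with the lower bound on the weight just off the cube. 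Assembling the two pieces with the quasi-triangle inequality \eqref{E1.1} yields \eqref{E3.5n}.

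For part (c), this is a normal-families argument. By part (a) each $f_n$ lies in $B_{Q^m(M)}$ for a common $M$ (the same $M$, since all $f_n\in B_V$), and $\sup_n\|f_n\|_{L_\iy(\R^m)}=C<\iy$. A standard estimate for functions of exponential type --- e.g.\ via the Cauchy integral formula over polydiscs combined with the Phragmén--Lindelöf bound $|f_n(z)|\le C\exp(M\sum_j|\IM z_j|)$ on $\CC^m$ --- shows that $\{f_n\}$ is locally uniformly bounded on $\CC^m$. By Montel's theorem there is a subsequence $\{f_{n_d}\}$ converging locally uniformly to an entire limit $f_0$. The limit inherits both bounds: $\|f_0\|_{L_\iy(\R^m)}\le C$ since this holds for each $f_{n_d}$ on $\R^m$, and $|f_0(z)|\le C\exp(M\sum_j|\IM z_j|)$ passes to the limit, so $f_0\in B_{Q^m(M)}\subseteq B_V$ (using $V^{**}=V$ and the containment direction of exponential-type classes noted in the excerpt, or simply that $B_{Q^m(M)}$ with the stated growth is contained in $B_V$ once $M$ was chosen from $V^*\supseteq\BB^m(1/M)$; here one is careful to choose $M$ so that $Q^m(M)\subseteq V$ rather than the reverse, which is possible because $V$ is a neighborhood of the origin, and this is the version of (a) one actually wants for (c)).

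The main obstacle is bookkeeping the two opposite inclusions between $V$ and a cube: part (a) as literally stated gives $f\in B_{Q^m(M)}$ from $B_V$ (enlarging the type), whereas the compactness argument in part (c) needs the other direction to conclude $f_0\in B_V$, so one must either invoke $Q^m(M')\subseteq V$ for a possibly smaller cube or argue the growth bound for $f_0$ directly from the uniform polydisc estimates and then read off membership in $B_V$ from the definition. Once that is handled cleanly, everything else is routine: the separation-of-variables reduction, the classical univariate Nikolskii inequality, and Montel's theorem.
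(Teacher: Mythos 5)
The decisive gap is in your treatment of the weighted inequality \eqref{E3.5n}. Your splitting argument rests on the claim that on $\R^m\setminus[-1,1]^m$ the weight $\prod_{j=1}^m\vert t_j\vert^{\al_j}$ is bounded below by a constant ``depending only on which coordinates are large''. For $m\ge 2$ this is false: the weight vanishes on every coordinate hyperplane $\{t_j=0\}$, and these hyperplanes run out to infinity, so a point such as $t=(T,\vep,\ldots)$ with $T$ huge and $\vep$ tiny lies far outside the unit cube yet carries arbitrarily small weight whenever some $\al_j>0$. Consequently neither the bound $\|f\|_{L_p(\R^m\setminus[-1,1]^m)}\le C\|f\|_{L_{p,\prod_j\vert t_j\vert^{\al_j}}(\R^m)}$ nor the ``lower bound on the weight just off the cube'' used for the piece on $[-1,1]^m$ is available, and the decomposition collapses. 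This is precisely the multivariate difficulty the paper must address: it invokes Kacnelson's equivalent-norm theorem for entire functions of exponential type, showing that the set $E=\{t:\prod_j\vert t_j\vert^{\al_j}\ge 1\}$ is $(L,\de)$-dense (its complement lies in a union of slabs around the coordinate hyperplanes), so that $\|f\|_{L_p(\R^m)}\le C\|f\|_{L_p(E)}$; on $E$ the weight is at least $1$, and \eqref{E3.4n} finishes the proof. Your plan works only for $m=1$, where the bad set really is just a bounded interval.

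Part (c) also has an unresolved point that you half-acknowledge: Montel plus the polydisc/Phragm\'en--Lindel\"of estimate only yields $f_0\in B_{Q^m(M)}$ with $Q^m(M)\supseteq V$, and your proposed remedy of choosing $M$ with $Q^m(M)\subseteq V$ cannot work, since the functions $f_n\in B_V$ need not have type bounded by such a small cube, while the large-cube bound does not place $f_0$ in $B_V$. What is actually needed is a uniform estimate of the form $\vert f_n(x+iy)\vert\le\|f_n\|_{L_\iy(\R^m)}\exp\bigl(\sup_{t\in V}(t,y)\bigr)$, valid for every function in $B_V\cap L_\iy(\R^m)$ (a Bernstein--Phragm\'en--Lindel\"of theorem for the body $V$ itself, not for a cube); this bound is uniform in $n$, passes to the locally uniform limit, and gives $f_0\in B_V$. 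The paper sidesteps this by citing a lemma from an earlier paper in the series. Finally, a small slip in (a): the identity $\|z\|^*_{Q^m(1)}=\sum_j\vert z_j\vert$ fails for nonreal $z$ (only $\le$ holds), but since $V\subseteq Q^m(M)$ already gives $\|z\|^*_V\le\|z\|^*_{Q^m(M)}$ directly and the lemma asserts only the existence of some $M$, this is harmless; parts (a) and \eqref{E3.4n} otherwise follow the same standard lines as the paper, which simply cites Nessel--Wilmes for \eqref{E3.4n}.
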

  \begin{proof}
  Statement (a) follows from the obvious inclusion $V\subseteq Q^m(M)$
  for a certain $M=M(V)>0$
  (cf. \cite[Lemma 2.1 (a)]{G2019b}).
  Next, statement (c) was proved in \cite[Lemma 2.3]{G2018},
  and inequality \eqref{E3.4n} was established in \cite[Theorem 5.7]{NW1978}.

  To prove weighted inequality \eqref{E3.5n}, we need the following definition.
  Given $L>0$ and $\de\in(0,L^m]$, we say that a measurable set $E\subseteq\R^m$
  is $(L,\de)$-\textit{dense} if $\vert Q_{y,L}\cap E\vert_m\ge \de$ for any cube
  $Q_{y,L}:=\{t\in\R^m: \max_{1\le j\le m}\vert t_j-y_j\vert\le L/2\},\,y\in\R^m$.
  Kacnelson \cite[Theorem 3]{K1973} proved the following theorem:
  if a set $E\subseteq\R^m$ is $(L,\de)$-dense and
  $f\in B_{Q^m(M)}\cap L_p(E),\,p\in(0,\iy]$, then $f\in L_p(\R^m)$ and
  \beq\label{E3.7n}
  \|f\|_{L_p(\R^m)}\le C_6(M,L,\de)\|f\|_{L_p(E)}.
  \eeq
  If the set
  $E:=\{t\in\R^m:\prod_{j=1}^m\left\vert t_j\right\vert^{\al_j}\ge 1\}$
  is $(L,\de)$-dense for certain $L$ and $\de$,
  then \eqref{E3.5n} follows from \eqref{E3.7n}.
  Indeed, by statement (a), $f\in B_{Q^m(M)}$ for  certain $M>0$. Next
  using \eqref{E3.4n} and Kacnelson's theorem, we obtain
  \ba
  \left\|f\right\|_{L_{\iy}(\R^m)}
  \le C_4
  \left\|f\right\|_{L_{p}(\R^m)}
  \le C_4C_6
  \left\|f\right\|_{L_{p}(E)}
  <C_4C_6  \left\|f\right\|_{L_{p,
  \prod_{j=1}^m\left\vert t_j\right\vert^{\al_j}}(\R^m)}.
  \ea
  Thus \eqref{E3.5n} is established for $C_5=C_4C_6$.
  It remains to show that $E$ is $(L,\de)$-dense
  for certain $L$ and $\de$.
  It is clear by contradiction that
  the following relation holds true:
  \beq\label{E3.8n}
  E^c:=\left\{t\in\R^m:
  \prod_{j=1}^m\left\vert t_j\right\vert^{\al_j}< 1\right\}
  \subseteq \bigcup_{j=1}^m E_j,
  \eeq
  where
  $E_j:=\{t\in\R^m:\vert t_j\vert\le 1\},\,1\le j\le m$.
  It follows from \eqref{E3.8n} that for any cube $Q_{y,L}$ with
  $L>2m$ and $y\in\R^m$,
  \ba
  \left\vert Q_{y,L}\cap E^c\right\vert_m
  \le \sum_{j=1}^m \left\vert Q_{y,L}\cap E_j\right\vert_m
  \le 2mL^{m-1},\quad
  \left\vert Q_{y,L}\cap E\right\vert_m \ge (L-2m)L^{m-1}.
  \ea
  Thus $E$ is $\left(2m+1,(2m+1)^{m-1}\right)$-dense,
  and the lemma is established.
 \end{proof}
  In the next lemma we discuss the error of
  polynomial approximation for functions from $B_V$.

  \begin{lemma}\label{L3.3}
  For any $F\in  B_{V}\cap L_\iy(\R^m),\,\tau\in(0,1)$, and $a\ge 1$,
  there is a polynomial
  $P_{a}\in\PP_{aV}$ such that for
    $r\in(0,\iy]$,
   \beq\label{E3.9n}
  \lim_{a\to\iy}
  \left\|F-P_{a}\right\|_{L_{r,
  \prod_{j=1}^m\left\vert t_j\right\vert^{\al_j}
  \left(1-(t_j/(a\tau))^2\right)^{\be_j}}
  (Q^m(a\tau))}=0.
  \eeq
  \end{lemma}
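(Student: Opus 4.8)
\textbf{Proof proposal for Lemma \ref{L3.3}.}
The plan is to construct $P_a$ by first approximating $F$ on a large cube by a trigonometric polynomial (or more precisely by the partial sums of a multivariate Fourier-type expansion), then passing to an algebraic polynomial via the standard device of replacing the exponentials by Taylor partial sums, and finally controlling the weighted error. Since $V$ satisfies the $\Pi$-condition, by Lemma \ref{L3.2}(a) we may assume $F \in B_{Q^m(M)}$ for a suitable $M = M(V) > 0$; this reduces everything to functions of exponential type a product of intervals, so the construction can be carried out coordinate by coordinate. I would first recall (or reprove, as in the univariate prototype used in \cite{G2020a}) that an entire function of exponential type $Q^m(M)$, bounded on $\R^m$, can be reconstructed on $Q^m(a\tau)$ with small sup-norm error by a polynomial $P_a$ whose Newton polyhedron lies in $a\tau M Q^m = c\, a\, Q^m$ for a fixed constant $c$; by choosing the scaling constants appropriately (or shrinking $\tau$) one arranges $P_a \in \PP_{aV}$ using $V \supseteq$ some fixed multiple of $Q^m(M)^{-1}$ — the key point being that the $\Pi$-condition makes $aV$ stable under the coordinatewise polynomial truncation, exactly as in the verification of condition \eqref{E2.1n} in Example \ref{Ex2.6}. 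The sup-norm estimate I have in mind is the classical one: $\|F - P_a\|_{L_\iy(Q^m(a\tau))} \le \epsilon(a)$ with $\epsilon(a) \to 0$, obtained by truncating the Taylor series of the exponential factors in a Paley–Wiener-type representation of $F$, together with the bound $\|F\|_{L_\iy(\R^m)} \le C_4 \|F\|_{L_p(\R^m)}$ from \eqref{E3.4n} to keep constants uniform.

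Once the uniform estimate $\|F - P_a\|_{L_\iy(Q^m(a\tau))} \le \epsilon(a) \to 0$ is in hand, the weighted conclusion \eqref{E3.9n} follows by integrating against the weight. Indeed, write
\ba
\left\|F - P_a\right\|_{L_{r, \prod_{j=1}^m |t_j|^{\al_j}(1-(t_j/(a\tau))^2)^{\be_j}}(Q^m(a\tau))}^{\tilde r}
\le \epsilon(a)^{\tilde r} \int_{Q^m(a\tau)} \prod_{j=1}^m |t_j|^{\al_j}\left(1 - (t_j/(a\tau))^2\right)^{\be_j}\,dt,
\ea
for $r < \iy$ (with the obvious modification for $r = \iy$, where one simply bounds the weight by a constant on $Q^m(a\tau)$ since $\al_j \ge 0$ and $\be_j$ may be negative only near the boundary — here for $r=\iy$ one takes $\|F-P_a\|_{L_\iy} \le \epsilon(a)$ directly after noting the essential supremum of the weight is finite on the open cube, or one absorbs the weight differently). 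The remaining integral factorizes as $\prod_{j=1}^m \int_{-a\tau}^{a\tau} |t_j|^{\al_j}(1-(t_j/(a\tau))^2)^{\be_j}\,dt_j$; substituting $t_j = a\tau\, u_j$ gives $\prod_{j=1}^m (a\tau)^{\al_j + 1} \int_{-1}^1 |u_j|^{\al_j}(1-u_j^2)^{\be_j}\,du_j$, and each one-dimensional integral is a finite Beta-type constant precisely because $\al_j \ge 0 > -1$ and $\be_j \ge -1/2 > -1$. Hence the weighted norm is at most $\epsilon(a) \cdot C(a\tau)^{(m + \sum \al_j)/r}$, which tends to $0$ if $\epsilon(a)$ decays faster than any power of $a$ — and the truncation-of-Taylor-series construction does give super-polynomial (in fact essentially exponential) decay of $\epsilon(a)$, so this is not a problem.

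The step I expect to be the main obstacle is the bookkeeping that makes $P_a \in \PP_{aV}$ rather than merely in $\PP_{a'Q^m}$ for some comparable $a'$: one must track exactly how the exponential type $M$ of $F$, the localization radius $a\tau$, and the Taylor truncation degree combine, and then invoke the $\Pi$-condition together with the scaling freedom in $\tau \in (0,1)$ to land inside $aV$. This is where the hypothesis that $V$ satisfies the $\Pi$-condition is used essentially (not just $V \subseteq Q^m(M)$ but the coordinatewise stability of $aV$), mirroring the role it plays in Example \ref{Ex2.6} and in \cite[Theorem 1.2]{G2020a}. A secondary technical point is the super-polynomial decay of $\epsilon(a)$: I would obtain it from the exact Paley–Wiener / sampling representation of $F \in B_{Q^m(M)} \cap L_\iy(\R^m)$, differentiating or truncating only finitely many exponential kernels, so that the error on $Q^m(a\tau)$ is controlled by the tail of $\sum_{k > ca} (a\tau M)^k/k!$, which is indeed $o(a^{-\nu})$ for every $\nu$. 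Given these two ingredients, \eqref{E3.9n} is immediate from the factorized Beta integrals above.
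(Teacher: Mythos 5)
Your second step is exactly the paper's argument: the paper also deduces \eqref{E3.9n} from a uniform bound on $Q^m(a\tau)$ by integrating the weight, factorizing it into one--dimensional Beta integrals $B(\al_j/2+1/2,\be_j+1)$ (finite precisely because $\al_j\ge 0$ and $\be_j\ge -1/2$), and beating the resulting power $a^{(m+\sum_{j=1}^m\al_j)/r}$ by the fast decay of the uniform error; compare \eqref{E3.10n}--\eqref{E3.12n}. The difference is in the first step, which is where all the content of Lemma \ref{L3.3} lies. The paper does not construct $P_a$ at all: it quotes the nonweighted version already proved in \cite[Lemma 2.7, Eq.\ (2.28)]{G2020a}, which supplies $P_a\in\PP_{aV}$ with $\|F-P_a\|_{L_\iy(Q^m(a\tau))}\le C_7 a^{2m/(m+4)}\exp(-C_8a)\|F\|_{L_\iy(\R^m)}$.

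Your sketch of that first step has a genuine gap, and it is the very one you flag. Passing to $F\in B_{Q^m(M)}$ via Lemma \ref{L3.2}(a) and then building $P_a$ coordinatewise (Taylor truncation of exponential kernels of type $\le M$ on $[-a\tau,a\tau]$) forces each partial degree to be at least proportional to $Ma\tau$ (the tail $\sum_{k>n}(Ma\tau)^k/k!$ is small only when $n$ exceeds a constant multiple, $\ge e$, of $Ma\tau$), so the Newton polyhedron of $P_a$ lands in a cube $Q^m(ca)$ with $c\asymp M\tau$. But $Q^m(ca)\subseteq aV$ requires $Q^m(c)\subseteq V$, which the $\Pi$-condition does not provide: it gives coordinatewise symmetry of $V$, not a large inscribed cube. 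For instance, $V=O^m(1)$, $m\ge 2$, satisfies the $\Pi$-condition, one may take $M=1$, yet $Q^m(c)\subseteq O^m(1)$ forces $c\le 1/m$, incompatible with $c\gtrsim \tau$ for $\tau$ near $1$; so the constructed $P_a$ lies in some $\PP_{a'Q^m}$ but not in $\PP_{aV}$. The two escape routes you propose do not work: $\tau\in(0,1)$ is a datum of the lemma and cannot be shrunk (in Step 1 of the proof of Theorem \ref{T1.2a} it is ultimately sent to $1-$), and ``$V\supseteq$ a fixed multiple of a cube'' only yields $\PP_{a\de Q^m}\subseteq\PP_{aV}$ for a small $\de$, which is too small to accommodate the degrees demanded by the accuracy estimate. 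To land in $\PP_{aV}$ one must not discard the geometry of $V$: the approximant has to be built from frequencies $\eta\in V$ of $F$ itself, truncating $e^{i\eta_jt_j}$ at degree proportional to $\vert\eta_j\vert a$ so that the exponent set lies in $\Pi^m(\mathrm{const}\cdot a\eta)$, which the $\Pi$-condition then places inside a multiple of $aV$ --- this is the bookkeeping carried out in \cite[Lemma 2.7]{G2020a}. As written, the membership $P_a\in\PP_{aV}$, and hence the lemma, is not established; you should either cite that result, as the paper does, or carry out such a $V$-adapted construction in detail.
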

  \begin{proof}
  A nonweighted version of Lemma \ref{L3.3} was proved
   in \cite[Lemma 2.7]{G2020a}. In particular, the following
   inequality holds true for
   any $F\in  B_{V}\cap L_\iy(\R^m),\,\tau\in(0,1)$, and
    a certain $P_a\in\PP_{aV},\,a\ge 1$
   (see \cite[Eq. (2.28)]{G2020a}):
   \beq\label{E3.10n}
 \left\|F-P_a\right\|_{L_{\iy}(Q^m(a\tau))}
 \le C_{7}(\tau,m,V)\,a^{\frac{2m}{m+4}}
\exp[-C_{8}(\tau,m,V)\, a]
 \left\|F\right\|_{L_{\iy}(\R^m)}.
 \eeq
Using \eqref{E3.10n}, we obtain
 \bna\label{E3.12n}
 &&\left\|F-P_{a}\right\|_{L_{r,
  \prod_{j=1}^m\left\vert t_j\right\vert^{\al_j}
  \left(1-(t_j/(a\tau))^2\right)^{\be_j}}
  (Q^m(a\tau))}\nonumber\\
  &&\le C_{7}
  \left(\prod_{j=1}^m B\left(\al_j/2+1/2,\be_j+1\right)\right)^{1/r}
  a^{\frac{2m}{m+4}+
  \left(m+\sum_{j=1}^m\al_j\right)/r}
\exp[-C_{8}\, a]
 \left\|F\right\|_{L_{\iy}(\R^m)}.
 \ena
 Then \eqref{E3.9n} follows from \eqref{E3.12n}.
  \end{proof}
  A certain inequality
  of different weighted metrics for multivariate polynomials
      is discussed in the following lemma.

    \begin{lemma}\label{L3.4}
     Given  $a\ge 1,\,M>0,\,p\in(0,\iy),\,\tau\in(2/3,1),$
     and $P\in\PP_{aV}$, the
    following inequality holds true:
    \beq\label{E3.13n}
    \|P\|_{L_\iy(Q^m(\tau M))}
    \le C_{9}\,(a/M)^{\left(m+\sum_{j=1}^m\al_j\right)/p}
    \|P\|_{L_{p,
  \prod_{j=1}^m\left\vert t_j\right\vert^{\al_j}
  \left(1-\left(t_j/M\right)^2\right)^{\be_j}}
  (Q^m(M))},
    \eeq
    where $C_9$ is independent of $a,\,M$, and $P$.
    \end{lemma}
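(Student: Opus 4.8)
The plan is to strip \eqref{E3.13n} down, by scaling and a tensor–product iteration, to a single one–dimensional weighted Nikolskii inequality on a subinterval, which is then the only real point; this is the weighted counterpart of the (nonweighted) inequality of different metrics on a cube used in the scheme of \cite{G2020a}.

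First I would normalize. Replacing $P(t)$ by $P(Mt)$ — which leaves the Newton polyhedron, hence membership in $\PP_{aV}$, unchanged, and scales both sides of \eqref{E3.13n} by the same power of $M$ — reduces matters to $M=1$, the weight becoming $\prod_{j=1}^m\vert t_j\vert^{\al_j}(1-t_j^2)^{\be_j}$ on $Q^m$. By Lemma~\ref{L3.2}(a) there is $M_0=M(V)>0$ with $V\subseteq Q^m(M_0)$, so $\PP_{aV}\subseteq\PP_{Q^m(aM_0)}$ and every $P\in\PP_{aV}$ has degree at most $n:=\lfloor aM_0\rfloor\le aM_0$ in each variable separately. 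Hence it suffices to prove
\[
\|P\|_{L_\iy(Q^m(\tau))}\le C\,n^{\left(m+\sum_{j=1}^m\al_j\right)/p}\,\|P\|_{L_{p,\prod_{j}\vert t_j\vert^{\al_j}(1-t_j^2)^{\be_j}}(Q^m)}
\]
for $P$ of coordinate degree $\le n$; since $n\le aM_0$ and the exponent is positive, replacing $n$ by $aM_0$ on the right and undoing the scaling restores the factor $(a/M)^{(m+\sum_j\al_j)/p}$ of \eqref{E3.13n}. For $n$ below a fixed threshold (depending only on $\tau$ and $V$) this is merely the equivalence of quasinorms on a fixed finite–dimensional space, so one may assume $n$ large.

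Next, the dimension is removed one variable at a time. Pick $x^{\ast}=(x_1^{\ast},\dots,x_m^{\ast})\in Q^m(\tau)$ with $\vert P(x^{\ast})\vert=\|P\|_{L_\iy(Q^m(\tau))}$. Granting the one–dimensional estimate
\begin{equation*}
\vert q(u_0)\vert^{p}\le C_0\,n^{\,1+\al}\int_{-1}^{1}\vert q(u)\vert^{p}\,\vert u\vert^{\al}(1-u^2)^{\be}\,du,\qquad q\in\PP_n,\ \ \vert u_0\vert\le\tau,
\tag{$\ast$}
\end{equation*}
one applies $(\ast)$ successively in the variables $t_1,\dots,t_m$, freezing the other coordinates at the corresponding $x_j^{\ast}\in[-\tau,\tau]$; since $(\ast)$ bounds a $p$-th power, Fubini's theorem turns the $m$-fold iteration into $\vert P(x^{\ast})\vert^{p}\le C_0^{\,m}\,n^{\,m+\sum_j\al_j}\int_{Q^m}\vert P(t)\vert^{p}\prod_{j=1}^{m}\vert t_j\vert^{\al_j}(1-t_j^2)^{\be_j}\,dt$, which is the displayed inequality after taking $p$-th roots. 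So everything reduces to $(\ast)$. To prove $(\ast)$ I would first note that on $[-\tau,\tau]$ the factor $(1-u^2)^{\be}$ lies between positive constants depending only on $\tau,\be$, so it is inert; the two genuine points are that the supremum is over a proper subinterval and that $\vert u\vert^{\al}$ vanishes at the interior point $u=0$. For the first I use the subinterval Nikolskii inequality for algebraic polynomials, $\|q\|_{L_\iy(J)}\le C(\tau,p)\,n^{1/p}\|q\|_{L_p(J)}$ for $q\in\PP_n$ and any interval $J\subseteq[-\tau,\tau]$ with $\vert J\vert\ge c(\tau)/n$ — the exponent here is $1/p$, not $2/p$, precisely because $J$ avoids $\pm1$, and this follows from the classical trigonometric Nikolskii inequality on an arc via $u=\cos\theta$, under which $J$ corresponds to an arc of comparable length staying away from $\theta=0,\pi$ and $du=\sin\theta\,d\theta\asymp d\theta$. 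If $\vert u_0\vert\ge c(\tau)/n$, I take $J\ni u_0$ inside $(-1,1)$ of length $\asymp\min(\vert u_0\vert,1/n)$ and avoiding $0$, so that $\vert u\vert^{\al}\asymp\vert u_0\vert^{\al}$ on $J$; then $\vert q(u_0)\vert^{p}\le Cn\int_J\vert q\vert^{p}\,du\le Cn\,\vert u_0\vert^{-\al}\int_J\vert q\vert^{p}\vert u\vert^{\al}(1-u^2)^{\be}\,du$, and $\vert u_0\vert^{-\al}\le(c(\tau)/n)^{-\al}\asymp n^{\al}$ produces the exponent $1+\al$. If $\vert u_0\vert<c(\tau)/n$, I take $J$ to be a fixed multiple of $[-1/n,1/n]$; after an affine rescaling of $J$ to $[-1,1]$ the restriction of $q$ becomes (via $u=\cos\theta$) the trace on a bounded interval of an entire function of exponential type $O(1)$, for which one has the uniform reverse–Hölder bound $\int_J\vert q\vert^{p}\,du\le C(p,\al)\,n^{\al}\int_J\vert q\vert^{p}\vert u\vert^{\al}\,du$ (the interior singularity $\vert u\vert^{\al}$ is an $A_\infty$ weight, and a function of bounded exponential type cannot concentrate near $0$ below scale $\asymp 1$; a normal–families argument makes the constant uniform), and combining this with the subinterval Nikolskii inequality again gives $(\ast)$.

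The hard part is exactly $(\ast)$ with the \emph{sharp} exponent $1+\al$: one has to see that the gain from restricting the supremum to the proper subcube $Q^m(\tau M)$ — which lowers the per–variable Nikolskii exponent from $2$ to $1$ — and the cost of the interior zero of $\vert t_j\vert^{\al_j}$ — which adds exactly $\al_j$, because a polynomial of degree $n$ resolves the interior point $t_j=0$ only down to scale $\asymp 1/n$ — combine without loss, while the endpoint factors $(1-t_j^2)^{\be_j}$ contribute nothing on $[-\tau M,\tau M]$. The remaining ingredients (the scaling, the degree reduction via $V\subseteq Q^m(M_0)$, the finite–dimensional case for small $n$, and the $p$-th power iteration) are routine.
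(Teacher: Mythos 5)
Your reduction scheme (scaling to $M=1$, passing to coordinate degree $n\asymp a$ via $V\subseteq Q^m(M_0)$, and tensorizing a one--dimensional pointwise bound by Fubini) is sound, and the one--dimensional inequality $(\ast)$ you isolate is in fact a true statement; but your proof of $(\ast)$ --- which you yourself identify as the whole point --- rests on two localized inequalities that are false. The ``subinterval Nikolskii inequality'' $\|q\|_{L_\iy(J)}\le C\,n^{1/p}\|q\|_{L_p(J)}$, with \emph{both} norms taken over the same interval $J$, fails as stated (at the endpoints of a fixed $J$ the sharp factor is $n^{2/p}$), and it fails even at the center of $J$ in the regime you actually use, $\vert J\vert\asymp 1/n$: restricted to $J$, a degree-$n$ polynomial is subject to no constraint other than its degree, and the Christoffel function of Lebesgue measure on an interval of length $\asymp 1/n$ at its center is $\asymp \vert J\vert/n\asymp n^{-2}$, not $n^{-1}$. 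Concretely, $q(u)=\bigl(1-\bigl((u-u_0)/(\vert J\vert/2)\bigr)^2\bigr)^{\lfloor n/2\rfloor}\in\PP_n$ has $q(u_0)=1$ while $\int_J\vert q\vert^p\,du\asymp \vert J\vert\,n^{-1/2}$, so the bound $\vert q(u_0)\vert^p\le Cn\int_J\vert q\vert^p\,du$ would force $1\le C\,c\,n^{-1/2}$. The cosine substitution does not rescue this: the corresponding arc inequality with integration over an arc of length $\asymp 1/n$ fails for the same reason. The interior-point gain $n^{1/p}$ is only available when the integration extends over a neighborhood of fixed length (or the whole interval), which is incompatible with your need to have $\vert u\vert^{\al}\asymp\vert u_0\vert^{\al}$ on the integration set when $\vert u_0\vert\asymp 1/n$ --- and that is exactly the critical regime producing the exponent $1+\al$.

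The second branch is also broken: the claimed uniform reverse bound $\int_J\vert q\vert^p\,du\le C\,n^{\al}\int_J\vert q\vert^p\vert u\vert^{\al}\,du$ for traces of ``entire functions of exponential type $O(1)$'' is false, because polynomials of arbitrary degree have exponential type $0$; the function $q(u)=\bigl(1-(nu/c)^2\bigr)^{\lfloor n/2\rfloor}\in\PP_n$ concentrates on a window of length $\asymp \vert J\vert/\sqrt{n}$ about $0$ and gives a ratio $\asymp n^{3\al/2}$, so ``bounded exponential type cannot concentrate below scale $1$'' is not a valid principle and no normal-families argument will make the constant uniform. Thus both cases of your proof of $(\ast)$ collapse; $(\ast)$ is true, but any proof must exploit the integral over all of $[-1,1]$ (e.g.\ via Christoffel-function or doubling-weight estimates for generalized Jacobi weights), not the localizations you propose. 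Note that the paper avoids sharp one-dimensional estimates entirely: it starts from the nonweighted case of \eqref{E3.13n} proved in \cite[Lemma 2.9]{G2020a}, observes that the factors $\bigl(1-(t_j/M)^2\bigr)^{\be_j}$ are bounded below on the slightly smaller cube, and inserts the weight $\prod_{j=1}^m\vert t_j\vert^{\al_j}$ crudely by excising a central cube $Q^m(CM/a)$ which, by the nonweighted inequality and its small volume, carries at most half of the $L_p$ mass of $P$; since $C_9$ need not be sharp, this rough argument suffices and sidesteps the difficulty on which your proposal founders.
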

    \begin{proof}
    Inequality \eqref{E3.13n} for $\al_j=\be_j=0,\,
    1\le j\le m$, was proved in \cite[Lemma 2.9]{G2020a}.
    Then setting $\al_j=\be_j=0,\,
    1\le j\le m$, and
    replacing $M$ by $(1-\vep/2)M$ and
    $\tau$ by $(1-\vep)/(1-\vep/2),\,\vep\in(0,1/2),$ in \eqref{E3.13n},
    we obtain
     \beq\label{E3.14n}
  \|P\|_{L_\iy(Q^m((1-\vep) M))}
    \le C_{10}\,(a/M)^{m/p}
    \|P\|_{L_{p}(Q^m((1-\vep/2)M))}.
  \eeq
  Therefore,  \eqref{E3.13n} follows from \eqref{E3.14n}
  and from the following inequalities:
  \bna\label{E3.15n}
  &&\|P\|_{L_{p}(Q^m((1-\vep/2)M))}
  \le C_{11}\,(a/M)^{\left(\sum_{j=1}^m\al_j\right)/p}
    \|P\|_{L_{p,
  \prod_{j=1}^m\left\vert t_j\right\vert^{\al_j}}(Q^m((1-\vep/2)M))}
  \nonumber\\
  &&\le C_{12}\,(a/M)^{\left(\sum_{j=1}^m\al_j\right)/p}
    \|P\|_{L_{p,
  \prod_{j=1}^m\left\vert t_j\right\vert^{\al_j}
  \left(1-\left(t_j/M\right)^2\right)^{\be_j}}(Q^m((1-\vep/2)M))},
  \ena
  where $C_{10},\,C_{11}$, and $C_{12}$ in \eqref{E3.14n}
   and \eqref{E3.15n}
   are independent of $a,\,M$, and $P$.
   The second inequality in \eqref{E3.15n} is all but trivial.
  To prove the first inequality in \eqref{E3.15n}, we observe that
  \bna\label{E3.16n}
   \|P\|_{L_{p,
  \prod_{j=1}^m\left\vert t_j\right\vert^{\al_j}}(Q^m((1-\vep/2)M))}
  &\ge& \|P\|_{L_{p,
  \prod_{j=1}^m\left\vert t_j\right\vert^{\al_j}}(Q^m((1-\vep/2)M)
  \setminus Q^m(CM/a)) }\nonumber\\
  &\ge& (CM/a)^{\left(\sum_{j=1}^m\al_j\right)/p}
  \|P\|_{L_{p}
  (Q^m((1-\vep/2)M)\setminus Q^m(CM/a)) },
  \ena
  where $C\in(0,1/3)$ is a fixed number.
  Next, we note that
  $0<C/a<1/3<1-\vep$, so
  by \eqref{E3.14n},
  \beq\label{E3.17n}
  \|P\|_{L_{p}(Q^m(CM/a))}
  \le \left(2CM/a\right)^{m/p}\|P\|_{L_{\iy}((Q^m(CM/a)))}
  \le \left(2C\right)^{m/p}C_{10}
  \|P\|_{L_{p}(Q^m((1-\vep/2)M))}.
  \eeq
  Choosing now $C:=\min\{1/3,C_{10}^{-p/m}2^{-1/m-1}\}$,
  we obtain from \eqref{E3.17n}
  \bna\label{E3.18n}
 && \|P\|_{L_{p}
  (Q^m((1-\vep/2)M)\setminus Q^m(CM/a))}^p
  =\|P\|_{L_{p}(Q^m((1-\vep/2)M))}^p
  -\|P\|_{L_{p}( Q^m(CM/a))}^p\nonumber\\
  &&\ge \left(1-(2C)^mC_{10}^p\right)\|P\|_{L_{p}(Q^m((1-\vep/2)M))}^p
  \ge (1/2)\|P\|_{L_{p}(Q^m((1-\vep/2)M))}^p.
  \ena
  Finally, combining \eqref{E3.16n} and \eqref{E3.18n}, we arrive at
  the first inequality in \eqref{E3.15n}. Thus \eqref{E3.13n} is established.
    \end{proof}
    We also need the following technical lemma.

     \begin{lemma}\label{L3.5}
     The following estimates hold true
($\tau\in(0,1),\, v_j\in[-\pi/2,\pi/2],\,1\le j\le m$):
\beq \label{E3.18an}
0\le \prod_{j=1}^m\left\vert  v_j\right\vert^{\al_j}
-\prod_{j=1}^m\left\vert\sin v_j\right\vert^{\al_j}
\left(\tau^2\cos^2 v_j+1-\tau^2\right)^{\be_j}\cos v_j
\le C_{13}\prod_{j=1}^m\left\vert  v_j\right\vert^{\al_j}\sum_{j=1}^mv_j^{2},
\eeq
where $C_{13}:=\max\{1,\al_1,\ldots,\al_m,2\be_1+1,\ldots,2\be_m+1\}$.
\end{lemma}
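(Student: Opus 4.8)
The plan is to reduce the $m$-variable estimate \eqref{E3.18an} to a one-variable inequality and to glue the coordinatewise estimates together with an elementary telescoping identity. Write $a_j:=\left\vert v_j\right\vert^{\al_j}$ and $b_j:=\left\vert\sin v_j\right\vert^{\al_j}\left(\tau^2\cos^2 v_j+1-\tau^2\right)^{\be_j}\cos v_j$; since $\tau^2\cos^2 v_j+1-\tau^2=1-\tau^2\sin^2 v_j$, the quantity in \eqref{E3.18an} equals $\prod_{j=1}^m a_j-\prod_{j=1}^m b_j$. For a fixed index I write $v:=v_j,\,\al:=\al_j,\,\be:=\be_j$ and $C(\al,\be):=\max\{1,\al,2\be+1\}$, and I would establish the two one-variable facts: (i) $0\le b\le a$; and (ii) $a-b\le C(\al,\be)\,a\,v^2$. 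Granting (i) and (ii), the lower bound in \eqref{E3.18an} is immediate, because $b_j\in[0,a_j]$ forces $\prod_j b_j\le\prod_j a_j$; and the upper bound follows from the identity
\[
\prod_{j=1}^m a_j-\prod_{j=1}^m b_j=\sum_{k=1}^m\Bigl(\prod_{j<k}a_j\Bigr)(a_k-b_k)\Bigl(\prod_{j>k}b_j\Bigr)\le\sum_{k=1}^m(a_k-b_k)\prod_{j\ne k}a_j
\]
(the inequality valid since $0\le b_j\le a_j$), into which (ii) is substituted and $C(\al_k,\be_k)\le C_{13}=\max_k C(\al_k,\be_k)$ is used; this produces exactly $C_{13}\bigl(\prod_j\left\vert v_j\right\vert^{\al_j}\bigr)\sum_k v_k^2$.

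Fact (i) is easy: $\left\vert\sin v\right\vert\le\left\vert v\right\vert$ and $\al\ge0$ give $\left\vert\sin v\right\vert^{\al}\le\left\vert v\right\vert^{\al}$, so it suffices to check $0\le\left(1-\tau^2\sin^2 v\right)^{\be}\cos v\le1$ on $[-\pi/2,\pi/2]$. For $\be\ge0$ both factors lie in $[0,1]$; for $-1/2\le\be<0$ one uses $1-\tau^2\sin^2 v\ge\cos^2 v$ and the monotonicity of $x\mapsto x^{\be}$ to get $\left(1-\tau^2\sin^2 v\right)^{\be}\cos v\le\cos^{2\be+1}v\le1$, since $2\be+1\ge0$.

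The step I expect to be the main obstacle is (ii) with this precise constant, since a naive factor-by-factor estimate of $\left(\sin v/v\right)^{\al}$, $\left(1-\tau^2\sin^2 v\right)^{\be}$, $\cos v$ overshoots $C(\al,\be)$. The device is to absorb the middle factor into a power of $\cos v$ uniformly in $\tau$: for $\be\ge0$ one has $\left(1-\tau^2\sin^2 v\right)^{\be}\ge\left(\cos^2 v\right)^{\be}$, hence $\left(1-\tau^2\sin^2 v\right)^{\be}\cos v\ge\cos^{2\be+1}v$; for $-1/2\le\be<0$ one has $\left(1-\tau^2\sin^2 v\right)^{\be}\ge1$, hence $\left(1-\tau^2\sin^2 v\right)^{\be}\cos v\ge\cos v$. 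In either case $\left(1-\tau^2\sin^2 v\right)^{\be}\cos v\ge\cos^{\g}v$ with $\g:=\max\{1,2\be+1\}\ge1$. Now $1-\cos v\le v^2/2$ and Bernoulli's inequality yield $\cos^{\g}v\ge1-\g(1-\cos v)\ge1-\tfrac{\g}{2}v^2$, while $\sin v/v\ge1-v^2/6$ and the elementary bound $(1-y)^{\al}\ge1-\max\{1,\al\}\,y$ for $y\in[0,1)$ yield $\left(\sin v/v\right)^{\al}\ge1-\tfrac{\max\{1,\al\}}{6}v^2$. Using that $P\ge1-p,\ Q\ge1-q,\ P,Q\ge0,\ p,q\ge0$ imply $PQ\ge1-p-q$, I multiply these two lower bounds to obtain, for $\al>0$ and $v\ne0$,
\[
\frac{b}{a}=\Bigl(\frac{\sin v}{v}\Bigr)^{\al}\left(1-\tau^2\sin^2 v\right)^{\be}\cos v\ge1-\Bigl(\tfrac{\max\{1,\al\}}{6}+\tfrac{\g}{2}\Bigr)v^2\ge1-C(\al,\be)\,v^2,
\]
which is (ii) (note $\tfrac{\max\{1,\al\}}{6}+\tfrac{\g}{2}\le\tfrac{2}{3}C(\al,\be)$); the degenerate cases $v=0$ (trivial) and $\al=0$ (same argument with the first factor equal to $1$ and the convention $\left\vert v\right\vert^0:=1$) are handled directly.

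All the remaining ingredients are routine: the telescoping identity, the elementary inequalities $\left\vert\sin v\right\vert\le\left\vert v\right\vert$, $\sin v/v\ge1-v^2/6$ and $1-\cos v\le v^2/2$ on $[-\pi/2,\pi/2]$, and the observation $C_{13}=\max_k C(\al_k,\be_k)$. I would package (i)--(ii) as an auxiliary univariate lemma and then deduce \eqref{E3.18an} from it in two lines.
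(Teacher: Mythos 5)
Your proof is correct and rests on the same ingredients as the paper's: the elementary bounds $1-\theta^{\g}\le\max\{1,\g\}(1-\theta)$, $1-\sin\te/\te\le\te^2/6$, $1-\cos\te\le\te^2/2$, the absorption of the $\tau$-dependent factor into $\cos^{\max\{1,2\be+1\}}v$, and a telescoping product identity. The only difference is organizational — you telescope over the coordinates first and prove a combined univariate estimate $a_k-b_k\le C(\al_k,\be_k)a_kv_k^2$, whereas the paper factors out $\prod_j\vert v_j\vert^{\al_j}$ and telescopes the sine-product and cosine-product deficits separately — so this is essentially the paper's argument.
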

\begin{proof}
The proof is based on the following elementary inequalities
\ba
&&1-\theta^\g\le \max\{1,\g\}(1-\theta),
\quad \theta\in[0,1],\quad\g\ge 0;
\qquad 1-\sin \theta/\theta\le \theta^2/6,\quad\theta\in[0,\pi/2];\\
&&(\cos\theta)^{2\g_1+1}
\le\left(\tau^2\cos^2 \theta+1-\tau^2\right)^{\g}\cos \theta
\le (\cos\theta)^{2\g_2+1}\le 1,\, \theta\in [0,\pi/2],\, \tau\in(0,1),
\, \g\ge -1/2;
\ea
where $\g_1:=\max\{\g,0\}$ and $\g_2:=\min\{\g,0\}$.
In particular, the left inequality in \eqref{E3.18an}
immediately follows from the elementary inequality
$\left(\tau^2\cos^2 \theta+1-\tau^2\right)^{\g}\cos \theta
\le 1,\,\theta\in [0,\pi/2]$, given above.

Next, for $v_j\in[0,\pi/2],\,1\le j\le m$, we obtain
\bna \label{E3.18bn}
&&\prod_{j=1}^mv_j^{\al_j}
-\prod_{j=1}^m
\left(\sin v_j\right)^{\al_j}
\left(\tau^2\cos^2 v_j+1-\tau^2\right)^{\be_j}\cos v_j\nonumber\\
&&\le \prod_{j=1}^mv_j^{\al_j}\left(
1-\prod_{j=1}^m\left(\frac{\sin v_j}{v_j}\right)^{\al_j}
+1-\prod_{j=1}^m
\left(\tau^2\cos^2 v_j+1-\tau^2\right)^{\be_j}\cos v_j\right).
\ena
Using now the easy identity
\ba
1-\prod_{j=1}^m D_j
=\sum_{j=1}^m\left(1-D_j\right)
\prod_{l=j+1}^m D_l,\qquad
D_j\in \R^1,\quad 1\le j\le m,
\ea
where
$\prod_{l=j+1}^m:=1$ for $m<j+1$,
and, in addition, using the elementary inequalities given above,
we obtain
\bna
&&1-\prod_{j=1}^m\left(\frac{\sin v_j}{v_j}\right)^{\al_j}
=\sum_{j=1}^m\left(1-\left(\frac{\sin v_j}
{v_j}\right)^{\al_j}\right)
\prod_{l=j+1}^m\left(\frac{\sin v_l}
{v_l}\right)^{\al_l}\nonumber\\
&&\le  \sum_{j=1}^m\left(1-\left(\frac{\sin v_j}
{v_j}\right)^{\al_j}\right)
\le(1/6)\max\{1,\al_1,\ldots,\al_m\} \sum_{j=1}^mv_j^{2};
\label{E3.18cn}\\
&&1-\prod_{j=1}^m
\left(\tau^2\cos^2 v_j+1-\tau^2\right)^{\be_j}\cos v_j
\nonumber\\
&&= \sum_{j=1}^m\left(1-
\left(\tau^2\cos^2 v_j+1-\tau^2\right)^{\be_j}\cos v_j\right)
\prod_{l=j+1}^m
\left(\tau^2\cos^2 v_l+1-\tau^2\right)^{\be_l}\cos v_l
\nonumber\\
&&\le \sum_{j=1}^m\left(1-
\left(\tau^2\cos^2 v_j+1-\tau^2\right)^{\be_j}\cos v_j\right)
\le \sum_{j=1}^m\left(1-
\left(\cos v_j\right)^{2\max\{\be_j,0\}+1}\right)\nonumber\\
&&\le (1/2)\max\{1,2\be_1+1,\ldots,2\be_m+1\} \sum_{j=1}^mv_j^{2}
\label{E3.18dn}.
\ena
Thus the right inequality in \eqref{E3.18an}
follows from \eqref{E3.18bn}, \eqref{E3.18cn},
 and \eqref{E3.18dn}.
 \end{proof}
\noindent
\emph{Proof of Theorem \ref{T1.2a}.}
Throughout the proof we use the notation $\tilde{p}=\min\{1,p\},\,
p\in(0,\iy)$, introduced in Section \ref{S1}.\vspace{.12in}\\
\textbf{Step 1.}
We first prove the inequality
\bna \label{E3.21n}
&&\NN_0\left(B_{V}\cap L_{p,\prod_{j=1}^m\left\vert t_j
\right\vert^{\al_j}}(\R^m),
  L_{p,\prod_{j=1}^m\left\vert t_j
  \right\vert^{\al_j}}(\R^m)\right)\nonumber\\
 && \le \liminf_{a\to\iy}a^{-\left(m+\sum_{j=1}^m\al_j\right)/p}
 \NN_{0}\left(\PP_{aV},
  L_{p,\prod_{j=1}^m\left\vert t_j\right\vert^{\al_j}
  \left(1-t_j^2\right)^{\be_j}}(Q^m)\right).
  \ena

Let $f$ be any function from $B_{V}\cap
L_{p,\prod_{j=1}^m\vert t_j\vert^{\al_j}}(\R^m),
  \,p\in(0,\iy)$,
  and let $\tau\in(0,1)$ be a fixed number.
Then $f\in L_\iy(\R^m)$ by Nikolskii-type inequality \eqref{E3.5n}.

In addition, we need one more function related to $f$.
Let $\g=\g(V)>0$ be a smallest number such that
$Q^m\subseteq \g V$.
Given $\vep\in(0,1/(2\g))$, we define
an $L_{p,\prod_{j=1}^m\left\vert t_j\right\vert^{\al_j}}(\R^m)$-
version of $f$ by
 \beq\label{E3.22n}
 F(t)=F_{\vep}(t)
 :=f((1-\g\vep)t)\left(\prod_{j=1}^m\frac{\sin
 \left(\vep t_j/d\right)}{\vep t_j/d}\right)^{d},
 \eeq
 where $d:=\lfloor(m+\sum_{j=1}^m\al_j)/p\rfloor +1$.
 Then $F$ has exponential type
 $(1-\g\vep)V+\vep Q^m \subseteq (1-\g\vep)V+ \g\vep V=V$.
 Therefore,
 $F\in B_{V}\cap L_\iy(\R^m)$ and, in addition,
 \beq\label{E3.23n}
 \|F\|_{L_{p,\prod_{j=1}^m\left\vert t_j\right\vert^{\al_j}}(\R^m)}
 \le (1-\g\vep)^{-\left(m+\sum_{j=1}^m\al_j\right)/p}\|f\|_
 {L_{p,\prod_{j=1}^m\left\vert t_j\right\vert^{\al_j}}(\R^m)}.
 \eeq

Next, we use polynomials  $P_{a}\in\PP_{aV},\,a\ge 1$,
from Lemma \ref{L3.3} such that for $r=\iy$ or $r=p, p\in(0,\iy)$,
 the following relations hold true
by \eqref{E3.9n}:
\beq\label{E3.24n}
\lim_{a\to\iy}
  \left\|F-P_{a}\right\|_{L_{\iy}(Q^m(a\tau))}=0,\,\,\,
  \lim_{a\to\iy}
  \left\|F-P_{a}\right\|_{L_{p,
  \prod_{j=1}^m\left\vert t_j\right\vert^{\al_j}
  \left(1-(t_j/(a\tau))^2\right)^{\be_j}}
  (Q^m(a\tau))}=0.
  \eeq
Using the first limit equality of \eqref{E3.24n}
and the definition of the sharp constant given by \eqref{E1.2},
we obtain
\bna\label{E3.25n}
 \left\vert f(0)\right\vert
 &=&\left\vert F(0)\right\vert
 \le  \lim_{a\to\iy}\left\vert F(0)-P_{a}(0)\right\vert
 +\liminf_{a\to\iy}\left\vert P_{a}(0)\right\vert
 =\liminf_{a\to\iy}\left\vert P_{a}(0)\right\vert\nonumber\\
 &&\le \liminf_{a\to\iy}
 \NN_{0}\left(\PP_{aV},
  L_{p,\prod_{j=1}^m\left\vert t_j\right\vert^{\al_j}
  \left(1-\left(t_j/(a\tau)\right)^2\right)^{\be_j}}
  (Q^m(a\tau))\right)\nonumber\\
  &&\times\limsup_{a\to\iy}\left\| P_{a}\right\|_
  { L_{p,\prod_{j=1}^m\left\vert t_j\right\vert^{\al_j}
  \left(1-\left(t_j/(a\tau)\right)^2\right)^{\be_j}}
  (Q^m(a\tau))}\nonumber\\
 && =\liminf_{a\to\iy}(a\tau)^{-\left(m+\sum_{j=1}^m\al_j\right)/p}
  \NN_{0}\left(\PP_{aV},
  L_{p,\prod_{j=1}^m\left\vert t_j\right\vert^{\al_j}
  \left(1-t_j^2\right)^{\be_j}}(Q^m)\right)\nonumber\\
  &&\times\limsup_{a\to\iy}\left\| P_{a}\right\|_
  { L_{p,\prod_{j=1}^m\left\vert t_j\right\vert^{\al_j}
  \left(1-\left(t_j/(a\tau)\right)^2\right)^{\be_j}}
  (Q^m(a\tau))}.
 \ena
 It remains to estimate the last line in \eqref{E3.25n}.
 Using "triangle" inequality \eqref{E1.1} and the second
 relations of \eqref{E3.24n}, we have
 \bna\label{E3.26n}
&&\limsup_{a\to\iy}\left\| P_{a}\right\|^{\tilde{p}}_
  { L_{p,\prod_{j=1}^m\left\vert t_j\right\vert^{\al_j}
  \left(1-\left(t_j/(a\tau)\right)^2\right)^{\be_j}}
  (Q^m(a\tau))}\nonumber\\
 && \le \lim_{a\to\iy}\left\|F- P_{a}\right\|^{\tilde{p}}_
  { L_{p,\prod_{j=1}^m\left\vert t_j\right\vert^{\al_j}
  \left(1-\left(t_j/(a\tau)\right)^2\right)^{\be_j}}
  (Q^m(a\tau))}\nonumber\\
 && +\limsup_{a\to\iy}\left\| F\right\|^{\tilde{p}}_
  { L_{p,\prod_{j=1}^m\left\vert t_j\right\vert^{\al_j}
  \left(1-\left(t_j/(a\tau)\right)^2\right)^{\be_j}}
  (Q^m(a\tau))}\nonumber\\
 &&=
 \limsup_{a\to\iy}I^{\tilde{p}}(a)
 :=\limsup_{a\to\iy}\left\| F\right\|^{\tilde{p}}_
  { L_{p,\prod_{j=1}^m\left\vert t_j\right\vert^{\al_j}
  \left(1-\left(t_j/(a\tau)\right)^2\right)^{\be_j}}
  (Q^m(a\tau))}.
  \ena
  Furthermore, we prove the estimate
  \beq\label{E3.27n}
  \limsup_{a\to\iy} I(a)\le (1-\g\vep)^{-\left(m+\sum_{j=1}^m
  \al_j\right)/p}\|f\|_
 {L_{p,\prod_{j=1}^m\left\vert t_j\right\vert^{\al_j}}(\R^m)}.
  \eeq
  Indeed, setting
$\Omega_\nu(A):=\left\{t\in\R^m:
\left\vert t_\nu\right\vert\ge A
\right\},\,A>0,$
and using the estimate
\ba
\left\vert F(t)\right\vert\le \|f\|_{L_\iy(\R^m)}
\left(\vep
\left\vert t_\nu\right\vert/d\right)^{-d},
\qquad t\in \Omega_\nu(A),
\ea
that follows from \eqref{E3.22n},
we obtain
  for any $\de\in(0,1)$
\bna\label{E3.28n}
&&\limsup_{a\to\iy}
\left\|F\right\|_
  { L_{p,\prod_{j=1}^m\left\vert t_j\right\vert^{\al_j}
  \left(1-\left(t_j/(a\tau)\right)^2\right)^{\be_j}}
  \left(Q^m(a\tau)\cap \Omega_\nu(\de a\tau)\right)}\nonumber\\
&&\le C_{14}\|f\|_{L_\iy(\R^m)}
\limsup_{a\to\iy} a^{\left(m+\sum_{j=1}^m\al_j
\right)/p-d}=0,\qquad 1\le \nu\le m,
\ena
where
\ba
C_{14}\le \left(\prod_{j=1}^mB\left(\al_j/2+1/2,\be_j+1
\right)\right)^{1/p}
\tau^{\left(m+\sum_{j=1}^m\al_j\right)/p}
(\vep\de \tau/d)^{-d}.
\ea
Next, without loss of generality we assume that
there exists $l\in\Z^1_+,\,0\le l\le m,$ such that
  $-1/2\le\be_j<0,\,1\le j\le l$, and
$\be_j\ge 0,\,l+1\le j\le m$.
Then it follows from \eqref{E3.28n}
 and \eqref{E3.23n} that
\bna\label{E3.29n}
&&\limsup_{a\to\iy} I^p(a)
\le \limsup_{a\to\iy}\left\| F\right\|^{p}_
  { L_{p,\prod_{j=1}^m\left\vert t_j\right\vert^{\al_j}
  \left(1-\left(t_j/(a\tau)\right)^2\right)^
  {\be_j}}(Q^m(\de a\tau))}
  \nonumber\\
  &&+\limsup_{a\to\iy}
  \sum_{\nu=1}^m\left\| F\right\|^{p}_
  {L_{p,\prod_{j=1}^m\left\vert t_j\right\vert^{\al_j}
  \left(1-\left(t_j/(a\tau)\right)^2\right)^
  {\be_j}}\left(Q^m(a\tau)\cap \Omega_\nu(\de a\tau)\right)}
 \nonumber\\
 &&\le \limsup_{a\to\iy}\left\| F\right\|^{p}_
  { L_{p,\prod_{j=1}^m\left\vert t_j\right\vert^{\al_j}
  \prod_{j=1}^l\left(1-\left(t_j/(a\tau)\right)^2\right)^
  {\be_j}}(Q^m(\de a\tau))}
  \nonumber\\
  &&\le (1-\de^2)^{\sum_{j=1}^l\be_j}
  (1-\g\vep)^{-\left(m+\sum_{j=1}^m\al_j\right)}
  \|f\|^p_
 {L_{p,\prod_{j=1}^m\left\vert t_j\right\vert^{\al_j}}(\R^m)}.
\ena
Letting $\de\to 0+$ in the last line of \eqref{E3.29n},
we arrive at \eqref{E3.27n} from \eqref{E3.29n}.
Combining \eqref{E3.25n}, \eqref{E3.26n}, and \eqref{E3.27n},
we obtain
\bna\label{E3.30n}
&&\frac{\left\vert f(0)\right\vert}
{\|f\|^p_
 {L_{p,\prod_{j=1}^m\vert t_j\vert^{\al_j}}(\R^m)}}
  \le [(1-\g\vep)\tau]
^{-\left(m+\sum_{j=1}^m\al_j\right)/p}\nonumber\\
&&\times \liminf_{a\to\iy}a^{-\left(m+\sum_{j=1}^m\al_j\right)/p}
  \NN_{0}\left(\PP_{aV},
  L_{p,\prod_{j=1}^m\left\vert t_j\right\vert^{\al_j}
  \left(1-t_j^2\right)^{\be_j}}(Q^m)\right).
  \ena
  Finally, letting $\vep\to 0+$ and
  $\tau\to 1-$  in \eqref{E3.30n},
we complete the proof of \eqref{E3.21n}.\vspace{.12in}\\
\textbf{Step 2.}
Furthermore, we will prove the
 inequality
 \bna\label{E3.31n}
 && \limsup_{a\to\iy}a^{-\left(m+\sum_{j=1}^m\al_j\right)/p}
 \NN_{0}\left(\PP_{aV},
  L_{p,\prod_{j=1}^m\left\vert t_j\right\vert^{\al_j}
  \left(1-t_j^2\right)^{\be_j}}(Q^m)\right)\nonumber\\
  &&\le\NN_0\left(B_{V}\cap L_{p,\prod_{j=1}^m\left\vert t_j
  \right\vert^{\al_j}}(\R^m),
  L_{p,\prod_{j=1}^m\left\vert t_j\right\vert^{\al_j}}(\R^m)\right),
  \ena
  by constructing a nontrivial function $f_0\in B_V\cap
  L_{p,\prod_{j=1}^m\left\vert t_j\right\vert^{\al_j}}(\R^m)$
   such that
 \bna \label{E3.32n}
  && \limsup_{a\to\iy}a^{-\left(m+\sum_{j=1}^m\al_j\right)/p}
 \NN_{0}\left(\PP_{aV},
  L_{p,\prod_{j=1}^m\left\vert t_j\right\vert^{\al_j}
  \left(1-t_j^2\right)^{\be_j}}(Q^m)\right)\nonumber\\
  &&\le\left\vert f_0(0)\right\vert
  /\left\|f_0\right\|_{L_{p,\prod_{j=1}^m\left\vert t_j
  \right\vert^{\al_j}}(\R^m)}
  \nonumber\\
  &&\le\NN_0\left(B_{V}\cap L_{p,\prod_{j=1}^m
  \vert t_j\vert^{\al_j}}(\R^m),
  L_{p,\prod_{j=1}^m\vert t_j\vert^{\al_j}}(\R^m)\right).
 \ena
 Then inequalities \eqref{E3.21n} and \eqref{E3.31n} imply  \eqref{E1.8a}.
 In addition, $f_0$ is an extremal function for
 $\NN_0\left(B_{V}\cap
  L_{p,\prod_{j=1}^m\vert t_j\vert^{\al_j}}(\R^m),
  L_{p,\prod_{j=1}^m\vert t_j\vert^{\al_j}}(\R^m)\right)$,
that is, \eqref{E1.9a} is valid.

It remains to construct a nontrivial function $f_0$,
satisfying \eqref{E3.32n}.
We first note that
\beq \label{E3.33n}
\inf_{a\ge 1}
a^{-\left(m+\sum_{j=1}^m\al_j\right)/p}
 \NN_{0}\left(\PP_{aV},
  L_{p,\prod_{j=1}^m\left\vert t_j\right\vert^{\al_j}
  \left(1-t_j^2\right)^{\be_j}}(Q^m)\right)
\ge C_{15},
\eeq
where $C_{15}$ is independent of $a$.
This inequality follows immediately from  \eqref{E3.21n}.

Let $U_a\in\PP_{aV}$ be a polynomial, satisfying the equality
\bna \label{E3.34n}
 \NN_{0}\left(\PP_{aV},
  L_{p,\prod_{j=1}^m\left\vert t_j\right\vert^{\al_j}
  \left(1-t_j^2\right)^{\be_j}}(Q^m)\right)
  =\frac{\left\vert U_a(0)\right\vert}
{\|U_a\|_{L_{p,\prod_{j=1}^m\left\vert t_j\right\vert^{\al_j}
  \left(1-t_j^2\right)^{\be_j}}(Q^m)}},\qquad a\ge 1.
\ena
The existence of an extremal polynomial $U_a$ in \eqref{E3.34n}
can be proved by the standard compactness argument
(see, e.g., \cite[Proof of Theorem 1.5]{GT2017} and
\cite[Proof of Theorem 1.3]{G2018}).
Next, setting $P_a(t):=U_a(t/a)$,
 we have from \eqref{E3.34n} that
\bna \label{E3.35n}
&&a^{-\left(m+\sum_{j=1}^m\al_j\right)/p}
 \NN_{0}\left(\PP_{aV},
  L_{p,\prod_{j=1}^m\left\vert t_j\right\vert^{\al_j}
  \left(1-t_j^2\right)^{\be_j}}(Q^m)\right)\nonumber\\
&&=\frac{\left\vert P_a(0)\right\vert}
{\|P_a\|_{L_{p,\prod_{j=1}^m\left\vert t_j\right\vert^{\al_j}
  \left(1-\left(t_j/a\right)^2\right)^{\be_j}}(Q^m(a))}}\nonumber\\
  &&=\frac{1}{\|P_a\|_{L_{p,\prod_{j=1}^m
  \left\vert t_j\right\vert^{\al_j}
  \left(1-\left(t_j/a\right)^2\right)^{\be_j}}(Q^m(a))}},
\ena
since we can assume that
\beq \label{E3.36n}
\left\vert P_a(0)\right\vert=1.
\eeq
Then it follows from \eqref{E3.35n} and \eqref{E3.33n}
that
\ba
\sup_{a\ge 1}
\|P_a\|_{L_{p,\prod_{j=1}^m\left\vert t_j\right\vert^{\al_j}
  \left(1-\left(t_j/a\right)^2\right)^{\be_j}}(Q^m(a))}
\le 1/C_{15}.
\ea
Hence using Lemma \ref{L3.4} for $M=a$
 and $\tau\in(2/3,1)$,
we obtain  the estimate
\beq \label{E3.37n}
\sup_{a\ge 1}\|P_a\|_{L_\iy(Q^m(a\tau))}
\le C_{9}/C_{15}
= C_{16}.
\eeq

Furthermore, given $a\ge 1$ and $\tau\in(2/3,1)$
we define the trigonometric polynomial
\beq\label{E3.37an}
R_{a,\tau}(y)
:=P_a(a\tau\sin(y_1/(a\tau)),\ldots, a\tau\sin(y_m/(a\tau))),\qquad
y\in \R^m,
\eeq
which is a periodic version of $P_a$.
Then $R_{a,\tau}$ satisfies the following properties:
\begin{itemize}
\item[(P1)] $R_{a,\tau}\in B_{(1/\tau)V}$.
\item[(P2)] The following relations hold true:
\beq \label{E3.38n}
\sup_{a\ge 1}\|R_{a,\tau}\|_{L_\iy(Q^m(a\tau\pi/2)}
=\sup_{a\ge 1}\|R_{a,\tau}\|_{L_\iy(\R^m)}\le C_{16}.
\eeq
\item[(P3)]
$ R_{a,\tau}(0)=P_{a}(0).$
\item[(P4)]
 For $a\ge 1,\,\tau\in(2/3,1),\,p\in(0,\iy)$, and
$M\in\left(0,a\tau/\sqrt{m C_{13}}\,\right)$,
\bna \label{E3.39n}
&&\|P_a\|_{L_{p,\prod_{j=1}^m\left\vert t_j\right\vert^{\al_j}
  \left(1-\left(t_j/a\right)^2\right)^{\be_j}}(Q^m(a))}
  \nonumber\\
&&\ge \left(1-C_{13}mM^2(a\tau)^{-2}\right)^{1/p}
\|R_{a,\tau}\|_{L_{p,\prod_{j=1}^m\left\vert y_j\right
\vert^{\al_j}}(Q^m(M))},
\ena
where the constant
$C_{13}\ge 1$ is defined in Lemma \ref{L3.5}.
\end{itemize}
Indeed, property (P3) is trivial, and property (P2)
is an immediate consequence of \eqref{E3.37n}
and \eqref{E3.37an}.
Next, setting $b:=a\tau$, we see that
\ba
R_{a,\tau}(by)
=\sum_{k\in aV\cap\Z^m_+}
b^{\langle k\rangle}c_k\prod_{j=1}^m\sin^{k_j}y_j
=\sum_{k\in aV\cap\Z^m_+}
b^{\langle k\rangle}c_k
\sum_{\eta\in\Z^m,0\le\vert \eta_j
\vert\le k_j,1\le j\le m}
d_{\eta,k}\exp[i(\eta, y)].
\ea
 Then $R_{a,\tau}(b\cdot)\in\TT_{aV}$,
 since $V$ satisfies the $\Pi$-condition,
and therefore, $R_{a,\tau}(\cdot)\in B_{(a/b)V}$.
This proves property (P1).
To prove (P4), we first discuss estimates for
the weight
\ba
W_{\al,\be}(y)
:=(a\tau)^{\sum_{j=1}^m\al_j}\prod_{j=1}^m
\left\vert\sin(y_j/(a\tau))\right \vert^{\al_j}
\left(\tau^2\cos^2(y_j/(a\tau))+1-\tau^2
\right)^{\be_j}\cos\left(y_j/(a\tau)\right).
\ea
on $Q^m(a\tau\pi/2)$.
Using Lemma \ref{L3.5} for $v_j=y_j/(a\tau),\,
1\le j\le m$, and
$y\in Q^m(M),\,M\in (0,a\tau\pi/2),$ we obtain
\beq\label{E3.41n}
0\le  \prod_{j=1}^m\left\vert y_j\right
\vert^{\al_j}-W_{\al,\be}(y)
\le C_{13}mM^2(a\tau)^{-2}
\prod_{j=1}^m\left\vert y_j\right\vert^{\al_j}.
\eeq
Next, it follows  from \eqref{E3.37an} and
the left inequality in \eqref{E3.41n}
that for $a\ge 1,\,\tau\in(2/3,1),\,p\in(0,\iy)$, and
$M\in(0,a\tau\pi/2]$,
\bna\label{E3.40n}
&&\|P_a\|^p_{L_{p,\prod_{j=1}^m\left\vert t_j\right
\vert^{\al_j}
  \left(1-\left(t_j/a
  \right)^2\right)^{\be_j}}(Q^m(a))}
  \ge \|P_a\|^p_{L_{p,\prod_{j=1}^m\left\vert t_j\right
\vert^{\al_j}
  \left(1-\left(t_j/a
  \right)^2\right)^{\be_j}}(Q^m(a\tau))}
  \nonumber\\
&&=\int_{Q^m(a\tau\pi/2)}\left\vert
R_{a,\tau}(y)\right\vert^p W_{\al,\be}(y)\,dy
\ge \int_{Q^m(M)}\left\vert
R_{a,\tau}(y)\right\vert^p W_{\al,\be}(y)\,dy
\nonumber\\
&&\ge \|R_{a,\tau}\|^p_{L_{p,\prod_{j=1}^m\left\vert y_j\right
\vert^{\al_j}}(Q^m(M))}
-\|R_{a,\tau}\|^p_{L_{p,\prod_{j=1}^m\left\vert y_j\right
\vert^{\al_j}-W_{\al,\be}(y)}(Q^m(M))}.
\ena
Thus inequality \eqref{E3.39n} of property (P4)
for $M\in\left(0,a\tau/\sqrt{m C_{13}}\,\right)$ follows
from \eqref{E3.40n} and
the right inequality in \eqref{E3.41n}.

Let $\{a_n\}_{n=1}^\iy$ be an increasing sequence
of numbers such that
 $\inf_{n\in\N}a_n\ge 1,\,\lim_{n\to\iy}a_n=\iy$,
and
\bna \label{E3.42n}
&&\limsup_{a\to\iy}
a^{-\left(m+\sum_{j=1}^m\al_j\right)/p}
 \NN_{0}\left(\PP_{aV},
  L_{p,\prod_{j=1}^m\left\vert t_j\right\vert^{\al_j}
  \left(1-t_j^2\right)^{\be_j}}(Q^m)\right)\nonumber\\
&&=\lim_{n\to\iy}
a_n^{-\left(m+\sum_{j=1}^m\al_j\right)/p}
 \NN_{0}\left(\PP_{a_nV},
  L_{p,\prod_{j=1}^m\left\vert t_j\right\vert^{\al_j}
  \left(1-t_j^2\right)^{\be_j}}(Q^m)\right).
\ena

Property (P1) and relation \eqref{E3.38n}
of property (P2)
 show that the  sequence of trigonometric polynomials
$\{R_{a_n,\tau}\}_{n=1}^\iy
=\{f_n\}_{n=1}^\iy$
 satisfies the conditions
of Lemma \ref{L3.2} (c)
with $B_V$ replaced by $B_{(1/\tau)V}$.
Therefore, there exist
a subsequence
 $\{R_{a_{n_d},\tau}\}_{d=1}^\iy$ and
 a function $f_{0,\tau}\in B_{(1/\tau)V}$ such that
 \beq \label{E3.43n}
  \lim_{d\to\iy}R_{a_{n_d},\tau}=f_{0,\tau}
 \eeq
 uniformly on any cube $Q^m(M),\,M>0$.

Moreover, it follows from
  \eqref{E3.43n}, property (P3), and \eqref{E3.36n}
  that
  \beq \label{E3.44n}
  \left\vert f_{0,\tau}(0)\right\vert
  =\lim_{d\to\iy}\left\vert R_{a_{n_d},\tau}(0)
  \right\vert
  =\lim_{d\to\iy}\left\vert P_{a_{n_d}}(0)
  \right\vert
  =1.
\eeq
In addition,
using \eqref{E1.1}, \eqref{E3.43n}, \eqref{E3.39n},
\eqref{E3.35n}, and \eqref{E3.42n},
we obtain for any cube $Q^m(M),\,M>0$,
\bna \label{E3.45n}
&&\|f_{0,\tau}\|_{L_{p,\prod_{j=1}^m\left\vert y_j\right
\vert^{\al_j}}(Q^m(M))}\nonumber\\
&&= \lim_{d\to\iy}
\left(\pm\left\|f_{0,\tau}-R_{a_{n_d},\tau}
\right\|_{L_{p,\prod_{j=1}^m\left\vert y_j\right
\vert^{\al_j}}(Q^m(M))}^{\tilde{p}}
+\left\|R_{a_{n_d},\tau}\right\|_
{L_{p,\prod_{j=1}^m\left\vert y_j\right
\vert^{\al_j}}(Q^m(M))}^{\tilde{p}}
\right)^{1/\tilde{p}}\nonumber\\
&&=\lim_{d\to\iy}\left\|R_{a_{n_d},\tau}\right\|_
{L_{p,\prod_{j=1}^m\left\vert y_j\right
\vert^{\al_j}}(Q^m(M))}\nonumber\\
&&\le \lim_{d\to\iy}\left\|P_{a_{n_d}}\right\|_
{L_{p,\prod_{j=1}^m\left\vert t_j\right\vert^{\al_j}
  \left(1-\left(t_j/\left(a_{n_d}\right)
  \right)^2\right)^{\be_j}}
  \left(Q^m\left(a_{n_d}\right)\right)}
\nonumber\\
&&=1/\lim_{d\to\iy}
a_{n_d}^{-\left(m+\sum_{j=1}^m\al_j\right)/p}
 \NN_{0}\left(\PP_{a_{n_d}V},
  L_{p,\prod_{j=1}^m\left\vert t_j\right\vert^{\al_j}
  \left(1-t_j^2\right)^{\be_j}}(Q^m)\right).
\ena
Next using \eqref{E3.33n} and \eqref{E3.45n}, we see that
\beq \label{E3.46n}
\|f_{0,\tau}\|_{L_{p,\prod_{j=1}^m\left\vert y_j\right
\vert^{\al_j}}(Q^m(M))}\le 1/C_{15}.
\eeq
Therefore, $f_{0,\tau}$ is a nontrivial function from
$B_{(1/\tau)V}\cap L_{p,\prod_{j=1}^m\left\vert y_j\right
\vert^{\al_j}}(\R^m)$
by \eqref{E3.46n} and \eqref{E3.44n}.
Thus for any cube $Q^m(M),\,M>0$,
we obtain from \eqref{E3.42n}, \eqref{E3.35n},
 \eqref{E3.39n}, \eqref{E3.43n}, and \eqref{E3.44n}
\bna \label{E3.47n}
&&\limsup_{a\to\iy}
a^{-\left(m+\sum_{j=1}^m\al_j\right)/p}
 \NN_{0}\left(\PP_{aV},
  L_{p,\prod_{j=1}^m\left\vert t_j\right\vert^{\al_j}
  \left(1-t_j^2\right)^{\be_j}}(Q^m)\right)\nonumber\\
&&=\lim_{d\to\iy}\left(\left\|P_{a_{n_d}}\right\|_
{L_{p,\prod_{j=1}^m\left\vert t_j\right\vert^{\al_j}
  \left(1-\left(t_j/\left(a_{n_d}\right)
  \right)^2\right)^{\be_j}}
  \left(Q^m\left(a_{n_d}\right)\right)}\right)^{-1}\nonumber\\
&&\le \lim_{d\to\iy}\left(\left\| R_{a_{n_d},\tau}\right\|
_{L_{p,\prod_{j=1}^m\left\vert y_j\right
\vert^{\al_j}}(Q^m(M))}\right)^{-1}\nonumber\\
&&=\left\vert f_{0,\tau}(0)\right\vert/
\|f_{0,\tau}\|_{L_{p,\prod_{j=1}^m\left\vert y_j\right
\vert^{\al_j}}(Q^m(M))}.
\ena
It follows from \eqref{E3.47n} that
for $\tau\in (2/3,1)$,
\bna\label{E3.48n}
 && \limsup_{a\to\iy}a^{-\left(m+\sum_{j=1}^m\al_j\right)/p}
 \NN_{0}\left(\PP_{aV},
  L_{p,\prod_{j=1}^m\left\vert t_j\right\vert^{\al_j}
  \left(1-t_j^2\right)^{\be_j}}(Q^m)\right)\nonumber\\
  &&\le\NN_0\left(B_{(1/\tau)V}
  \cap L_{p,\prod_{j=1}^m\left\vert t_j\right\vert^{\al_j}}(\R^m),
  L_{p,\prod_{j=1}^m\vert t_j\vert^{\al_j}}(\R^m)\right)\nonumber\\
  &&=\tau^{-\left(m+\sum_{j=1}^m\al_j\right)/p}
  \NN_0\left(B_{V}\cap L_{p,\prod_{j=1}^m\left\vert t_j
  \right\vert^{\al_j}}(\R^m),
  L_{p,\prod_{j=1}^m\left\vert t_j\right\vert^{\al_j}}(\R^m)\right).
  \ena
Then letting $\tau\to 1-$ in
\eqref{E3.48n}, we arrive at \eqref{E3.31n}.
 However, we also need to prove  stronger relations
\eqref{E3.32n}.

To construct $f_0$, recall first that
$f_{0,\tau}(\cdot)\in B_{(1/\tau)V},\,
\tau\in(2/3,1)$,
and by \eqref{E3.46n}
and \eqref{E3.5n},
\ba
&&\sup_{\tau\in (2/3,1)}\|f_{0,\tau}\|_{L_\iy(\R^m)}
=\sup_{\tau\in (2/3,1)}\|f_{0,\tau}(\tau\cdot)\|_{L_\iy(\R^m)}\\
&&\le C_5 \sup_{\tau\in (2/3,1)}
\tau^{-\left(m+\sum_{j=1}^m\al_j\right)/p}
\|f_{0,\tau}\|_{L_{p,\prod_{j=1}^m\left\vert t_j
\right\vert^{\al_j}}(\R^m)}
\le (3/2)^{\left(m+\sum_{j=1}^m\al_j\right)/p}C_5/C_{15}=C,
\ea
where $C_5,\,C_{15}$, and $C$ are independent of $\tau$
because $f_{0,\tau}(\tau\cdot)\in B_V$.
Therefore, by Lemma \ref{L3.2} (c) applied to a sequence
$\left\{f_{0,\tau_n}\right\}_{n=1}^\iy=
\left\{f_n\right
\}_{n=1}^\iy$, where
$\tau_n\in(2/3,1),\,n\in\N$, and $\lim_{n\to\iy}\tau_n=1$,
there exist a subsequence
$\{f_{0,\tau_{n_d}}\}_{d=1}^\iy$ and a function
$f_0\in B_V\cap L_\iy(\R^m)
=\bigcap_{d=1}^\iy \left(B_{(1/\tau_{n_d})V}
\cap L_\iy(\R^m)\right)$
such that
$\lim_{d\to\iy} f_{0,\tau_{n_d}}=f_0$
uniformly on any compact set in $\CC^m$.

Note that by \eqref{E3.46n} and \eqref{E3.44n},
$f_0$ is a nontrivial function from
$B_{V}\cap L_{p,\prod_{j=1}^m\left\vert y_j\right
\vert^{\al_j}}(\R^m)$. Then using \eqref{E3.47n},
we obtain
\ba
&&\limsup_{a\to\iy}
a^{-\left(m+\sum_{j=1}^m\al_j\right)/p}
 \NN_{0}\left(\PP_{aV},
  L_{p,\prod_{j=1}^m\left\vert t_j\right\vert^{\al_j}
  \left(1-t_j^2\right)^{\be_j}}(Q^m)\right)\nonumber\\
&&\le \lim_{M\to\iy}\lim_{n\to\iy}
\left\vert f_{0,\tau_n}(0)\right\vert/
\left\|f_{0,\tau_n}\right\|_{L_{p,\prod_{j=1}^m\left\vert y_j\right
\vert^{\al_j}}(Q^m(M))}\\
&&=\left\vert f_{0}(0)\right\vert/
\left\|f_{0}\right\|_{L_{p,\prod_{j=1}^m\left\vert y_j\right
\vert^{\al_j}}(\R^m)}\\
&&\le\NN_0\left(B_{V}\cap L_{p,\prod_{j=1}^m\left\vert t_j
\right\vert^{\al_j}}(\R^m),
  L_{p,\prod_{j=1}^m\vert t_j\vert^{\al_j}}(\R^m)\right).
\ea
Thus \eqref{E3.32n} holds true, and this completes
the proof of the theorem.
\hfill$\Box$

\section{The $m$-dimensional Cube $Q^m$.
Proof of Theorem  \ref{T1.2}}\label{S3an}
\noindent
\setcounter{equation}{0}
Recall that $\PP_{aV,e}$ is the set of all
 even polynomials in each variable from
$\PP_{aV}$.
Throughout the section we assume that
$V\subset\R^m$ satisfies the $\Pi$-condition,
$\la_j\ge 0,\,1\le j\le m$,
and $p\in[1,\iy)$.
The proof reduces Theorem \ref{T1.2}
to Theorem \ref{T1.2a} by using
Lemma \ref{L2.7}
 and equalities between
the sharp constants on the cube $Q^m$ discussed
in the lemma below.

\begin{lemma}\label{L4.1}
The following equalities hold true:
\bna\label{E3.1n}
&&\NN_{(1,\ldots,1)}\left(\PP_{aV},
  L_{p,\prod_{j=1}^m\left(1-x_j^2
  \right)^{\la_j-1/2}}(Q^m)\right)\nonumber\\
 && =4^{-(\sum_{j=1}^m\la_j)/p}
  \NN_{0}\left(\PP_{2aV,e},
  L_{p,\prod_{j=1}^m\left\vert t_j
  \right\vert^{2\la_j}\left(1-t_j^2
  \right)^{\la_j-1/2}}(Q^m)\right)\nonumber\\
 && =4^{-(\sum_{j=1}^m\la_j)/p}
  \NN_{0}\left(\PP_{2aV},
  L_{p,\prod_{j=1}^m\left\vert t_j\right
  \vert^{2\la_j}\left(1-t_j^2\right)^{\la_j-1/2}}(Q^m)\right).
\ena
\end{lemma}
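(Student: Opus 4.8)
The plan is to obtain both equalities by explicit changes of variable on $Q^m$ followed by a symmetrization, so that the whole argument is a bookkeeping exercise with no existence statements needed.

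\textbf{First equality.} First I would introduce the map $\psi\colon Q^m\to Q^m$, $\psi(t):=(1-2t_1^2,\dots,1-2t_m^2)$; it is surjective, sends $0$ to the vertex $(1,\dots,1)$, and is two-to-one in each coordinate. I would then check that $P\mapsto \widetilde{P}:=P\circ\psi$ is a bijection from $\PP_{aV}$ onto $\PP_{2aV,e}$. Indeed, expanding $\prod_{j=1}^m(1-2t_j^2)^{k_j}$ produces only even monomials $t^{2\ell}$ with $0\le\ell_j\le k_j$; since $V$ satisfies the $\Pi$-condition, $k\in aV$ forces $\ell\in\Pi^m(k)\subseteq aV$, hence $2\ell\in 2aV$, so $\widetilde{P}\in\PP_{2aV,e}$. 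Conversely, any $Q\in\PP_{2aV,e}$ is a sum of monomials $t^{2\ell}$ with $\ell\in aV$, and writing $t_j^{2\ell_j}=((1-x_j)/2)^{\ell_j}$ exhibits $Q=P\circ\psi$ with $P\in\PP_{aV}$ (again using the $\Pi$-condition to control the Newton polyhedron). Clearly $\widetilde{P}(0)=P((1,\dots,1))$.

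Next I would transform the weighted norm. From $1-(1-2t^2)^2=4t^2(1-t^2)$ and $dx=-4t\,dt$ one obtains the univariate identity $\int_{-1}^1 g(x)(1-x^2)^{\la-1/2}\,dx=4^{\la}\int_{-1}^1 g(1-2t^2)\,\vert t\vert^{2\la}(1-t^2)^{\la-1/2}\,dt$, where $4^{\la}$ absorbs the Jacobian $4\vert t\vert$ together with the factor $1/2$ coming from the two-to-one covering of $[-1,1]$ by $\psi$. Applying this in each variable to $g=\vert P\vert^p$ and using Fubini gives $\|P\|_{L_{p,\prod_{j=1}^m(1-x_j^2)^{\la_j-1/2}}(Q^m)}=4^{(\sum_{j=1}^m\la_j)/p}\,\|\widetilde{P}\|_{L_{p,\prod_{j=1}^m\vert t_j\vert^{2\la_j}(1-t_j^2)^{\la_j-1/2}}(Q^m)}$. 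Dividing $\vert P((1,\dots,1))\vert=\vert\widetilde{P}(0)\vert$ by this identity and taking the supremum over $P\in\PP_{aV}\setminus\{0\}$, which by the bijection equals the supremum over $\widetilde{P}\in\PP_{2aV,e}\setminus\{0\}$, yields the first equality of the lemma.

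\textbf{Second equality.} The inclusion $\PP_{2aV,e}\subseteq\PP_{2aV}$ gives ``$\le$'' between the corresponding sharp constants. For ``$\ge$'' I would symmetrize: given $Q\in\PP_{2aV}\setminus\{0\}$, set $Q_e(t):=2^{-m}\sum_{\de\in\{-1,1\}^m}Q(\de_1 t_1,\dots,\de_m t_m)$. Each summand lies in $\PP_{2aV}$ with unchanged Newton polyhedron and, by the change of variables $t_j\mapsto\de_j t_j$, has the same $L_p$-norm as $Q$ with respect to the coordinatewise-even weight $\prod_{j=1}^m\vert t_j\vert^{2\la_j}(1-t_j^2)^{\la_j-1/2}$; since $p\in[1,\iy)$ the triangle inequality \eqref{E1.1} (with $\tilde{p}=1$) then gives $\|Q_e\|\le\|Q\|$ in that norm, while $Q_e\in\PP_{2aV,e}$ and $Q_e(0)=Q(0)$. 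Hence $\vert Q(0)\vert/\|Q\|=\vert Q_e(0)\vert/\|Q\|\le\vert Q_e(0)\vert/\|Q_e\|\le\NN_0(\PP_{2aV,e},\,L_{p,\prod_{j=1}^m\vert t_j\vert^{2\la_j}(1-t_j^2)^{\la_j-1/2}}(Q^m))$, and taking the supremum over $Q$ completes the proof.

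\textbf{Main obstacle.} There is no serious difficulty; the one point requiring care is the change of variables $\psi$ — tracking the Jacobian together with the two-to-one covering so that the constant comes out exactly $4^{(\sum_{j=1}^m\la_j)/p}$ — and verifying that $P\mapsto P\circ\psi$ preserves the Newton-polyhedron constraint, which is precisely where the $\Pi$-condition on $V$ is used.
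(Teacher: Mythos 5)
Your proposal is correct and follows essentially the same route as the paper: the identification $\PP_{2aV,e}=\{R_a(1-2t_1^2,\ldots,1-2t_m^2):R_a\in\PP_{aV}\}$ via the $\Pi$-condition, the change of variables producing exactly the factor $4^{(\sum_{j=1}^m\la_j)/p}$, and the coordinatewise symmetrization for the second equality. The only difference is cosmetic (you perform the substitution coordinatewise on $[-1,1]$ rather than mapping onto $[0,1]^m$ as the paper does), so no further comment is needed.
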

\begin{proof}
We first prove that
\beq\label{E3.2n}
\PP_{2aV,e}=S
:=\left\{P_{2a}(t)
=R_a\left(1-2t_1^2,\ldots,1-2t_m^2\right):R_a\in \PP_{aV}\right\}.
\eeq
Indeed, if $k\in aV\cap\Z^m_+$,
that is, $R_a(x):=x^k$ is a monomial from $\PP_{aV}$,
 then the polynomial
$P_{2a}(t):=\left(1-2t_1^2\right)^{k_1}\ldots
\left(1-2t_m^2\right)^{k_m}$ from $S$
 belongs to
$\PP_{\Pi^m(2k),e}\subseteq\PP_{2aV,e}$, by the $\Pi$-condition.
Therefore, $S\subseteq\PP_{2aV,e}$. Conversely,
if $2k\in 2aV\cap\Z^m_+$,
that is, $P_{2a}(t):=t^{2k}$ is a monomial from $\PP_{2aV,e}$,
  then the polynomial
$R_a(x):=\left(\frac{1-x_1}{2}\right)^{k_1}\ldots
\left(\frac{1-x_m}{2}\right)^{k_m} $ belongs to
$\PP_{\Pi^m(k)}\subseteq\PP_{aV}$, by the $\Pi$-condition;
hence $P_{2a}\in S$.
Therefore, $\PP_{2aV,e}\subseteq S$, and \eqref{E3.2n} holds true.

Next, making the substitution
$x=\left(1-2t_1^2,\ldots,1-2t_m^2\right): [0,1]^m\to Q^m$
and setting
$P_{2a}(t):=R_a\left(1-2t_1^2,\ldots,1-2t_m^2\right)$
for any $R_a\in \PP_{aV}$, we have
\bna\label{E3.3n}
&&\frac{\left\vert R_a(1,\ldots,1)\right\vert^p}
{\int_{Q^m}\left\vert R_a(x)\right\vert^p
\prod_{j=1}^m\left(1-x_j^2\right)^{\la_j-1/2}dx}\nonumber\\
&&=\frac{\left\vert P_{2a}(0)\right\vert^p}
{4^{m/2+\sum_{j=1}^m\la_j}
\int_{[0,1]^m}\left\vert P_{2a}(t)\right\vert^p
\prod_{j=1}^m t_j^{2\la_j}
\left(1-t_j^2\right)^{\la_j-1/2}dt}\nonumber\\
&&=\frac{\left\vert P_{2a}(0)\right\vert^p}
{4^{\sum_{j=1}^m\la_j}
\left\|P_{2a}\right\|^p_{ L_{p,\prod_{j=1}^m
\left\vert t_j\right\vert^{2\la_j}
\left(1-t_j^2\right)^{\la_j-1/2}}(Q^m)}}.
\ena
Then the first equality in \eqref{E3.1n}
follows
from \eqref{E3.2n} and \eqref{E3.3n}.
 Furthermore, using the symmetrization trick from Remark \ref{R1.4},
 we see that
  for any $P_{2a}\in \PP_{2aV}$ the polynomial
  \ba
 P_{2a,e}(t):=2^{-m}\sum_{\left\vert \de_j\right\vert=1,\,1\le j\le m}
 P_{2a}\left(\de_1t_1,\ldots, \de_mt_m\right)
 \ea
 belongs to $\PP_{2aV,e}$ and
 \ba
 \frac{\left\vert P_{2a}(0)\right\vert}
 {\left\| P_{2a}\right\|_{L_{p,\prod_{j=1}^m\left\vert t_j\right
  \vert^{2\la_j}\left(1-t_j^2\right)^{\la_j-1/2}}(Q^m)}}
  \le \frac{\left\vert P_{2a,e}(0)\right\vert}
 {\left\| P_{2a,e}\right\|_{L_{p,\prod_{j=1}^m\left\vert t_j\right
  \vert^{2\la_j}\left(1-t_j^2\right)^{\la_j-1/2}}(Q^m)}}.
 \ea
Hence the second equality in \eqref{E3.1n}
holds true.
\end{proof}
\noindent
\emph{Proof of Theorem \ref{T1.2}.}
 An extremal polynomial $P^*$
  for
  $\NN\left(\PP_{aV},
  L_{p,\prod_{j=1}^m\left(1-x_j^2\right)^{\la_j-1/2}}(Q^m)\right)$
   whose uniform norm is attained at
 the vertex
 $x_0=(1,\ldots,1)$
 of $Q^m$, that is,
 $\|P^*\|_{L_\iy(Q^m)}=\vert P^*((1,\ldots,1))\vert$
 exists by Lemma \ref{L2.7}.
Then Lemma \ref{L4.1} implies the equality
 \bna\label{E3.19n}
&&\NN\left(\PP_{aV},
  L_{p,\prod_{j=1}^m\left(1-x_j^2\right)^{\la_j-1/2}}(Q^m)\right)\nonumber\\
 && =4^{-(\sum_{j=1}^m\la_j)/p}
  \NN_{0}\left(\PP_{2aV},
  L_{p,\prod_{j=1}^m\left\vert t_j\right\vert^{2\la_j}
  \left(1-t_j^2\right)^{\la_j-1/2}}(Q^m)\right).
\ena
Next, setting $\al_j=2\la_j,\,\be_j=\la_j-1/2,\,1\le j\le m$,
and replacing $a$ with $2a$,  we obtain from
limit relations \eqref{E1.8a} and \eqref{E1.9a} of Theorem \ref{T1.2a}
\bna \label{E3.20n}
  &&\lim_{a\to\iy}(2a)^{-\left(m+2\sum_{j=1}^m\la_j\right)/p}
  \NN_{0}\left(\PP_{2aV},
  L_{p,\prod_{j=1}^m\left\vert t_j\right\vert^{2\la_j}
  \left(1-t_j^2\right)^{\la_j-1/2}}(Q^m)\right)\nonumber\\
  &&=\NN_0\left(B_{V}\cap L_{p,\prod_{j=1}^m
  \left\vert t_j\right\vert^{2\la_j}}(\R^m),
  L_{p,\prod_{j=1}^m\left\vert t_j
  \right\vert^{2\la_j}}(\R^m)\right)\nonumber\\
  &&= \vert f_0(0)\vert/\|f_0\|_
{L_{p,\prod_{j=1}^m\left\vert t_j\right\vert^{2\la_j}}(\R^m)},
 \ena
 where
 $f_0\in  \left(B_V\cap L_{p,\prod_{j=1}^m\left\vert t_j
 \right\vert^{2\la_j}}(\R^m)\right)\setminus\{0\}$.
 Thus equalities \eqref{E1.8} and \eqref{E1.9}
 of Theorem \ref{T1.2} immediately follow
from relations \eqref{E3.19n} and \eqref{E3.20n}.\hfill$\Box$

\section{The $m$-dimensional Ball $B^m$.
Proofs of Theorems  \ref{T1.3} and \ref {T1.3a}}\label{S4n}
\noindent
\setcounter{equation}{0}
Throughout the section we assume that
$\la\ge 0$
and $p\in[1,\iy)$.
The proofs of Theorems \ref{T1.3}
and \ref {T1.3a} are based on
Lemmas \ref{L2.9} and \ref{L2.11} and on four
propositions of independent interest below.
Three of these propositions discuss equalities between
the sharp constants on $\BB^m$ or $\R^m$.

We first need three \textit{invariance theorems}
(this term was introduced in
\cite{G2008, G2019}).
Let $D(m),\,m\ge 2,$ be the group of all proper and
improper rotations $\rho=\rho_m$ (about the origin)
of $\R^m$. We identify $D(m)$ with the group $D^*(m)$ of all
$m\times m$ orthogonal
matrices $A=A_m$
that is isomorphic to $D(m)$
since $\rho\in D(m)$ if and only if $ \rho x=A(\rho)x^T$,
where $A(\rho)$ is an
$m\times m$ orthogonal matrix with $\vert\det A(\rho)\vert=1,\,
x\in\R^m$,
and $x^T$ is a column vector.

Let $x_0=x_0(m):=(1,0,\ldots,0)\in\R^m$, and let ${x}_0^T$ be
the column version of $x_0$.
Next, let $D(m,x_0),\,m\ge 2,$ be the subgroup
 of $D(m)$ of all proper and
improper rotations $\rho$ around the $x_1$-axis
of $\R^m$ (i.e., $\rho x_0=x_0$)
 that is isomorphic to the group $D^*(m,x_0)$
 of all $m\times m$ orthogonal
matrices $A_m$, satisfying the condition $A_mx_0^T=x_0^T$.
The group $D^*(m,x_0)$ can be characterized as
the subgroup of $D^*(m)$ of all
 $m\times m$ orthogonal matrices $A_m$ of the
 following block form:
 \beq\label{E4.1aa}
 A_m=\begin{bmatrix}
 1 &0\\
 0& A_{m-1}
 \end{bmatrix},
 \eeq
 where $A_{m-1}\in D^*(m-1)$.
 Representation \eqref{E4.1aa} immediately
  follows from the condition $A_mx_0^T=x_0^T$ and
  from the fact that the transpose  of
    $A_m\in  D^*(m,x_0)$ belongs to $D^*(m,x_0)$ as well.
For example, $D^*(2,x_0)=\{I_2, A_2\}$, where $I_2$ is the
 $2 \times 2$ identity matrix and
 \ba
 A_2=\begin{bmatrix}
 1 &0\\
 0& -1
 \end{bmatrix}.
 \ea
So the only nontrivial rotation from $D(2,x_0)$ is the reflection around
the $x_1$-axis.

In terms of rotations, representation \eqref{E4.1aa} of
$A_m\in  D^*(m,x_0)$ is equivalent to the criterion
\beq\label{E4.1ab}
\rho_m\in D(m,x_0) \Longleftrightarrow \rho_m x=(x_1,\rho_{m-1} x^\prime),
\qquad x\in\R^m,
\eeq
where $x^\prime:=(x_2,\ldots,x_m)$ and $\rho_{m-1}\in D(m-1)$
is a rotation about the origin of the $(m-1)$-dimensional subspace
 $\{x\in\R^m:x_1=0\}$ of $\R^m$.

We say that $f:\R^m\to\R^m$ is \textit{invariant} under
a subgroup $D$ of $D(m)$
 if $f(\rho x)=f(x)$ for all $\rho\in D$ and $x\in\R^m$.
 The set of all polynomials $P\in\PP_{n,m}$ that are invariant under
$D$ is denoted by $\PP_{n,m}^D$,
and the set of all entire functions
 $f\in B_{\BB^m}$ that are invariant under
$D$ is denoted by $B_{\BB^m}^D$.

In addition,
 let $\PP_{n,2,e}$ be the set of all polynomials
 $P_2(u,v)$ of two variables from $\PP_{n,2}$
  that are even with respect to the second variable.
  We first find the representation of a polynomial that is
  invariant under $D(m,x_0)$.
  Note that the proof of the following proposition is
  the refinement of the proof of Lemma 4.2.12 in \cite{SW1971}.

 \begin{proposition}\label{P4.1}
A polynomial $P$ belongs to $\PP_{n,m}^{D(m,x_0)},\,m\ge 2$,
if and only if
there exists a polynomial  $P_2\in\PP_{n,2,e}$
such that
$P(x)=P_2\left(x_1,\left(\sum_{j=2}^mx_j^2\right)^{1/2}\right)$.
\end{proposition}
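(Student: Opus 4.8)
The plan is to prove both implications by induction on $m\ge 2$, with $m=2$ as the base case. The ``if'' direction is immediate: if $P(x)=P_2\bigl(x_1,(\sum_{j=2}^m x_j^2)^{1/2}\bigr)$ with $P_2$ even in its second variable, then the expression $\sum_{j=2}^m x_j^2=\vert x'\vert^2$ is invariant under every $\rho_{m-1}\in D(m-1)$ acting on $x'=(x_2,\dots,x_m)$, so by the criterion \eqref{E4.1ab} we get $P(\rho_m x)=P(x)$ for all $\rho_m\in D(m,x_0)$; evenness of $P_2$ in the second slot guarantees that the formula defines a genuine polynomial in $x$ (the odd powers of $\vert x'\vert$ do not occur). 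The real content is the ``only if'' direction.

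For the ``only if'' direction, suppose $P\in\PP_{n,m}^{D(m,x_0)}$. First I would write $P(x)=\sum_{k\ge 0} x_1^k\,Q_k(x')$ where each $Q_k\in\PP_{n-k,m-1}$, collecting powers of $x_1$; since $D(m,x_0)$ fixes $x_1$ and acts as $D(m-1)$ on $x'$ via \eqref{E4.1ab}, invariance of $P$ forces each $Q_k$ to be invariant under the full rotation group $D(m-1)$ of the $(m-1)$-dimensional space $\{x_1=0\}$. So the problem reduces to showing: a polynomial $Q\in\PP_{N,m-1}$ invariant under all proper and improper rotations of $\R^{m-1}$ is a polynomial in $\vert x'\vert^2=\sum_{j=2}^m x_j^2$, i.e., $Q(x')=R(\vert x'\vert^2)$ for some one-variable polynomial $R$. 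Granting this for every $k$, we set $P_2(u,w):=\sum_k u^k R_k(w^2)$, which is even in $w$ and satisfies $P(x)=P_2(x_1,\vert x'\vert)$, as required; the degree bound $P_2\in\PP_{n,2,e}$ follows since $\deg R_k\le (N-k)$ is bounded so that $k+2\deg_w$-terms stay within total degree $n$ (one checks $\deg_u + \deg_w\le n$ monomial-by-monomial using $\deg Q_k\le n-k$).

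It remains to prove the key fact that a fully $O(d)$-invariant polynomial on $\R^d$ (here $d=m-1$) is a polynomial in $\vert y\vert^2$. The cleanest route: decompose $Q=\sum_j Q^{(j)}$ into homogeneous components; each $Q^{(j)}$ is itself $O(d)$-invariant (the rotation group preserves degree), so it suffices to treat $Q$ homogeneous of degree $j$. A homogeneous $O(d)$-invariant polynomial of odd degree must vanish identically because $-I\in O(d)$ (improper rotation, as $\det(-I)=\pm1$ and in any case $-I$ or a reflection lies in the full rotation group we are using) sends it to its negative; for even degree $j=2\ell$, restrict $Q$ to the sphere $S^{d-1}$ — by transitivity of $O(d)$ on $S^{d-1}$, $Q$ is constant there, say $Q\equiv c$ on $S^{d-1}$, and then homogeneity gives $Q(y)=c\,\vert y\vert^{2\ell}$ for all $y$ (both sides agree on the sphere and are homogeneous of the same degree). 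This is exactly the argument underlying Lemma 4.2.12 of \cite{SW1971} referenced in the statement; the only subtlety to be careful about is that we need the \emph{full} (proper and improper) group to kill the odd-degree pieces, which is precisely what $D(m-1)$ and hence $D(m,x_0)$ provide.

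The main obstacle is bookkeeping rather than a deep idea: one must verify that the reconstructed $P_2$ has the correct total degree, i.e., that $P\in\PP_{n,m}$ translates exactly into $P_2\in\PP_{n,2,e}$, which requires checking that each monomial $x_1^k y^{2\beta}$ appearing in $P$ (with $k+2\vert\beta\vert\le n$) corresponds to a monomial $u^k w^{2\vert\beta\vert}$ of total degree $k+2\vert\beta\vert\le n$ in $P_2$ — a clean correspondence, but one worth stating. A secondary point of care is the base case $m=2$: there $D(2,x_0)=\{I_2,A_2\}$ with $A_2=\mathrm{diag}(1,-1)$, so invariance means exactly evenness in $x_2$, and $P(x_1,x_2)=P_2(x_1,\vert x_2\vert)$ with $P_2(u,w)=P(u,w)$ (already even in $w$), which is immediate and anchors the induction.
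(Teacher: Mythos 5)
Your proposal is correct and follows essentially the same route as the paper: expand $P$ in powers of $x_1$, show each coefficient polynomial in $x'=(x_2,\ldots,x_m)$ is invariant under the full rotation group $D(m-1)$ (you do this by matching coefficients of $x_1$, the paper by a partial-derivative trick at $x_1=0$ -- the same content), and then conclude that each such coefficient is a polynomial in $\sum_{j=2}^m x_j^2$, with the same degree bookkeeping yielding $P_2\in\PP_{n,2,e}$. The only real difference is that you prove the rotation-invariance fact directly (homogeneous components, $-I$ killing odd degrees, transitivity on the sphere), whereas the paper cites Lemma 4.2.11 of \cite{SW1971}; also, the announced induction on $m$ is never actually used, but the direct argument you give is complete without it.
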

\begin{proof}
 It follows from \eqref{E4.1ab} that for any
 $P_2\in \PP_{n,2,e}$ the  polynomial
$P(x)=P_2\left(x_1,\left(\sum_{j=2}^mx_j^2\right)^{1/2}\right)$
 is invariant under $D(m,x_0)$.

 Conversely, let a polynomial
 $P(x)=\sum_{l=0}^nx_1^{l}P_{l,n}(x_2,\ldots,x_m)$
 from $\PP_{n,m}$
 be invariant under $D(m,x_0)$.
 Here, $P_{l,n}\in \PP_{n-l,m-1},\,0\le l\le n$.
 Then the polynomials
 $\frac{1}{l!}\frac{\partial^l P(x)}{\partial x_1^l},\,
 0\le l\le n,$ are
 invariant under $D(m,x_0)$ as well
 because by \eqref{E4.1ab},
 $Q(x_1+\de,\rho_{m-1} x^\prime)=Q(x_1+\de,x^\prime)$
 for any $\de\in\R^1$ and every
 $Q\in \PP_{n-\nu,m}^{D(m,x_0)},\,0\le \nu\le n$.
 Therefore, the polynomials
 \ba
 P_{l,n}(x^\prime)=P_{l,n}(x_2,\ldots,x_m)
 =\left.\frac{1}{l!}\frac{\partial^l P(x)}{\partial x_1^l}
  \right\vert_{x_1=0},\qquad 0\le l\le n,
 \ea
 are invariant under $D(m,x_0)$.
 Hence taking into account that $P_{l,n}$
 are independent of $x_1$, we conclude
  from \eqref{E4.1ab} that
 $P_{l,n}(\rho_{m-1} x^\prime)
 =P_{l,n}(x^\prime),\,0\le l\le n$,
 for every rotation
 $\rho_{m-1}$ of the
 subspace $\{x\in\R^m:x_1=0\}$ of $\R^m$.
 Therefore, $P_{l,n}$ are invariant under $D(m-1)$
 and by Lemma 4.2.11 from \cite{SW1971},
 \ba
 P_{l,n}(x_2,\ldots,x_m)
 =\sum_{j=0}^{\lfloor (n-l)/2\rfloor}b_{j,l}
 (x_2^2+\ldots+x_m^2)^j,
 \qquad 0\le l\le n.
 \ea
 Thus
 \ba
 P(x)=\sum_{0\le l+2\nu\le
 n}c_{l,\nu}x_1^l(x_2^2+\ldots+x_m^2)^\nu
 =P_2\left(x_1,\left(\sum_{j=2}^mx_j^2\right)^{1/2}\right),
 \ea
 where $P_2\in \PP_{n,2,e}$, and the proposition is established.
 \end{proof}
 \noindent
 A different version
  of Proposition \ref{P4.1} was discussed in \cite[Proposition 5.1]{G2008}.

  In the proofs of the next two propositions, we need the following
  simple observation: if $W(x)=\psi(\vert x\vert)$ is a radial weight,
  then for $\Omega=\BB^m$ or $\Omega=\R^m$ the norm in
  $L_{p,W}(\Omega)$ is invariant under rotation, that is,
  for any
  $F\in L_{p,W}\left(\R^m\right)$ and any
  rotation $\rho \in D(m)$,
  \ba
  \|F(\rho\cdot)\|_{L_{p,W}\left(\R^m\right)}
  =\|F(\cdot)\|_{L_{p,W}\left(\R^m\right)},\qquad p\in[1,\iy).
  \ea

Next, we reduce the sharp constant for the $m$-dimensional ball
  $\BB^m,\,m\ge 2$,
  to the one on the disk $\BB^2$.
 Recall that $x_0=x_0(m)=(1,0,\ldots,0)\in\R^m$.

  \begin{proposition}\label{P4.2}
  The following equalities hold true for $m\ge 2$:
  \bna\label{E4.1}
   &&\NN_{x_0(m)}\left(\PP_{n,m},
  L_{p,(1-\vert x\vert^2)^{\la-1/2}}\left(\BB^m\right)\right)\nonumber\\
  &=&\NN_{x_0(m)}\left(\PP_{n,m}^{D(m,x_0(m))},
  L_{p,(1-\vert x\vert^2)^{\la-1/2}}\left(\BB^m\right)\right)\nonumber\\
  &=&C_{17}\,\NN_{x_0(2)}\left(\PP_{n,2,e},
  L_{p,(1-u^2-v^2)^{\la-1/2}\vert v\vert^{m-2}}\left(\BB^2\right)\right),
  \ena
  where
  \beq\label{E4.2}
  C_{17}=C_{17}(m,p)
  :=\left(\frac{\Gamma((m-1)/2)}{\pi^{(m-1)/2}}\right)^{1/p}.
  \eeq
  \end{proposition}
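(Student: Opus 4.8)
\emph{Proof plan.} The plan is to prove the two equalities in \eqref{E4.1} in turn; throughout I would write $W(x):=(1-\vert x\vert^2)^{\la-1/2}$ for the radial weight on $\BB^m$ and $W_2(u,v):=(1-u^2-v^2)^{\la-1/2}\vert v\vert^{m-2}$ for the weight on $\BB^2$. For the \emph{first equality}, the inclusion $\PP_{n,m}^{D(m,x_0(m))}\subseteq\PP_{n,m}$ makes ``$\ge$'' trivial, so only ``$\le$'' needs work. Given $P\in\PP_{n,m}$, I would average it over the compact group $D(m,x_0(m))$ against its normalized Haar measure $d\rho$, setting $\widetilde P(x):=\int_{D(m,x_0(m))}P(\rho x)\,d\rho$. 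Since each $\rho$ acts linearly on $\R^m$, the function $x\mapsto P(\rho x)$ is a polynomial of total degree at most $n$ whose coefficients depend continuously on $\rho$, so $\widetilde P\in\PP_{n,m}$; and $\widetilde P$ is invariant under $D(m,x_0(m))$ by construction, hence $\widetilde P\in\PP_{n,m}^{D(m,x_0(m))}$. Every $\rho\in D(m,x_0(m))$ fixes $x_0(m)$, so $\widetilde P(x_0(m))=P(x_0(m))$. Finally $W$ is radial, so the $L_{p,W}(\BB^m)$-norm is invariant under every rotation of $\R^m$; combined with the integral form of Minkowski's inequality (this is where the hypothesis $p\in[1,\iy)$ is used) this yields
\[
\|\widetilde P\|_{L_{p,W}(\BB^m)}
\le\int_{D(m,x_0(m))}\|P(\rho\,\cdot)\|_{L_{p,W}(\BB^m)}\,d\rho
=\|P\|_{L_{p,W}(\BB^m)} .
\]
Thus $\vert P(x_0(m))\vert/\|P\|_{L_{p,W}(\BB^m)}\le\vert\widetilde P(x_0(m))\vert/\|\widetilde P\|_{L_{p,W}(\BB^m)}$, and taking the supremum over $P\in\PP_{n,m}\setminus\{0\}$ gives the first equality.

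For the \emph{second equality}, I would use Proposition \ref{P4.1}: the map $P_2\mapsto P$ with $P(x):=P_2\bigl(x_1,(\sum_{j=2}^mx_j^2)^{1/2}\bigr)$ is a bijection of $\PP_{n,2,e}$ onto $\PP_{n,m}^{D(m,x_0(m))}$ (it lands in polynomials because evenness of $P_2$ in the second variable lets one write $P_2(u,v)=\widetilde P_2(u,v^2)$). Along this bijection $\vert P(x_0(m))\vert=\vert P_2(1,0)\vert=\vert P_2(x_0(2))\vert$. For the norm I would pass to polar coordinates in $x'=(x_2,\dots,x_m)$, writing $x'=r\omega$ with $r\ge0$ and $\omega\in S^{m-2}$, so $dx'=r^{m-2}\,dr\,d\omega$; with $u=x_1$ and $v=r$ this gives
\[
\|P\|_{L_{p,W}(\BB^m)}^p
=\left\vert S^{m-2}\right\vert_{m-2}\iint_{u^2+v^2\le1,\;v\ge0}
\vert P_2(u,v)\vert^p(1-u^2-v^2)^{\la-1/2}\,v^{m-2}\,du\,dv .
\]
The integrand $\vert P_2(u,v)\vert^pW_2(u,v)$ is even in $v$, so the half-disk integral equals one half of the integral over $\BB^2$; using $\left\vert S^{m-2}\right\vert_{m-2}=2\pi^{(m-1)/2}/\Gamma((m-1)/2)$ (with the convention $\vert S^0\vert_0=2$ when $m=2$) this becomes
\[
\|P\|_{L_{p,W}(\BB^m)}^p
=\frac{\pi^{(m-1)/2}}{\Gamma((m-1)/2)}\,\|P_2\|_{L_{p,W_2}(\BB^2)}^p .
\]
Dividing, $\vert P(x_0(m))\vert/\|P\|_{L_{p,W}(\BB^m)}=C_{17}\,\vert P_2(x_0(2))\vert/\|P_2\|_{L_{p,W_2}(\BB^2)}$ with $C_{17}$ as in \eqref{E4.2}; since the correspondence is a bijection scaling the ratio by the fixed factor $C_{17}$, taking suprema yields the second equality.

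I do not expect a genuine obstacle; the computation is routine. The two points that need care are: (i) verifying that the Haar average $\widetilde P$ stays in $\PP_{n,m}$ and that the assumption $p\ge1$ is exactly what licenses the Minkowski step; and (ii) the parity bookkeeping in the polar-coordinate computation --- evenness of $P_2$ in its second variable is simultaneously what makes $P$ a bona fide polynomial and what allows replacing the half-disk by $\BB^2$ --- together with pinning down $\left\vert S^{m-2}\right\vert_{m-2}$ so that the constant comes out exactly $C_{17}$.
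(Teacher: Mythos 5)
Your proposal is correct and follows essentially the same route as the paper: Haar averaging over $D(m,x_0(m))$ combined with the generalized Minkowski inequality and rotation invariance of the radial weighted norm for the first equality, and Proposition \ref{P4.1} plus a change of variables for the second. The only cosmetic difference is that you compute the norm identity via polar coordinates in $x'=(x_2,\dots,x_m)$ rather than the paper's full spherical coordinates in $\R^m$; the constant comes out the same, $C_{17}=\left(\Gamma((m-1)/2)/\pi^{(m-1)/2}\right)^{1/p}$.
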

  \begin{proof}
  The proof of the first equality in \eqref{E4.1}
  follows general ideas developed in \cite[Theorems 2.1 and 2.2]{G2019}.
  Given $P\in\PP_{n,m}$, we define the Haar integral
  \beq\label{E4.3}
P^*(x):=\int_{D(m,x_0)}P(\rho x)d\mu(\rho),
\eeq
where $\mu$ is the invariant Haar measure on $D(m,x_0)$ (see, e.g.,
\cite[Theorem 5.14]{R1973}).
Since $\rho:\BB^m\to \BB^m$ is a linear map,
 $P(\rho\cdot)\in\PP_{n,m},\,
\rho\in D(m,x_0)$, and $P^*\in\PP_{n,m}$.
In addition, for every $\rho^*\in D(m,x_0)$,
\ba
P^*(\rho^*x)=\int_{D(m,x_0)}P(\rho\rho^*x)d\mu(\rho)
=\int_{D(m,x_0)}P(\rho x)d\mu(\rho)
=P^*(x).
\ea
Therefore, $P^*\in \PP_{n,m}^{D(m,x_0)}$.
Using now the generalized
  Minkowski inequality (see, e.g., \cite[Lemma 3.2.15]{DS1988})
  and taking into account the fact that the norm
  $\|\cdot\|_{ L_{p,\left(1-\vert x\vert^2\right)^{\la-1/2}}(\BB^m)}$
  is invariant under rotation, we obtain
\beq\label{E4.4}
\left\|P^*\right\|_{ L_{p,\left(1-\vert x\vert^2\right)^{\la-1/2}}(\BB^m)}
\le \int_{D(m,x_0)}\|P(\rho\cdot)\|_
{ L_{p,\left(1-\vert x\vert^2\right)^{\la-1/2}}(\BB^m)}d\mu(\rho)
= \left\|P\right\|_
{ L_{p,\left(1-\vert x\vert^2\right)^{\la-1/2}}(\BB^m)}.
\eeq
Furthermore, since $\rho\in D(m,x_0)$ in \eqref{E4.3}, we have
\beq\label{E4.5}
P^*(x_0)=P(x_0).
\eeq
Therefore, by \eqref{E4.4} and \eqref{E4.5},
\ba
\frac{\left\vert P(x_0)\right\vert}
{\left\|P\right\|_
{ L_{p,\left(1-\vert x\vert^2\right)^{\la-1/2}}(\BB^m)}}
\le \frac{\left\vert P^*(x_0)\right\vert}
{\left\|P^*\right\|_
{ L_{p,\left(1-\vert x\vert^2\right)^{\la-1/2}}(\BB^m)}},
\ea
which proves the first equality in \eqref{E4.1}.

Next, by Proposition \ref{P4.1},
\bna\label{E4.6}
&&\NN_{x_0(m)}\left(\PP_{n,m}^{D(m,x_0(m))},
  L_{p,(1-\vert x\vert^2)^{\la-1/2}}\left(\BB^m\right)\right)\nonumber\\
  &&=\sup_{P_2\in\PP_{n,2,e}\setminus\{0\}}
  \frac{\left\vert P_2(1,0)\right\vert}
{\left(\bigint_{\BB^m}\left\vert P_2\left(x_1,\left(\sum_{j=2}^mx_j^2\right)^{1/2}
\right)\right\vert^p(1-\vert x\vert^2)^{\la-1/2}dx\right)^{1/p}}.
  \ena
Using the spherical coordinate system in $\R^m$,
we see that $x_1=r\cos \vphi$
and  $\left(\sum_{j=2}^mx_j^2\right)^{1/2}=r\vert\sin \vphi\vert$,
where $\vphi\in[0,2\pi)$ if $m=2$ and $\vphi\in[0,\pi]$ if $m>2$.
Since $P_2\in\PP_{n,2,e}$ is even with respect to the second variable,
we obtain
\bna\label{E4.7}
&&\bigint_{\BB^m}\left\vert P_2\left(x_1,\left(\sum_{j=2}^mx_j^2\right)^{1/2}
\right)\right\vert^p(1-\vert x\vert^2)^{\la-1/2}dx\nonumber\\
&&=(1/C_{17})^p
\int_{0}^1\int_{0}^{2\pi}\left\vert P_2\left(r\cos \vphi,r\sin \vphi
\right)\right\vert^p(1-r^2)^{\la-1/2}r^{m-1}\vert\sin \vphi\vert^{m-2}d\vphi\,dr
\nonumber\\
&&=(1/C_{17})^p
\int_{(u,v)\in \BB^2}\left\vert P_2\left(u,v
\right)\right\vert^p(1-u^2-v^2)^{\la-1/2}\vert v\vert^{m-2}du\,dv,
\ena
where
\ba
C_{17}=
\left(\frac{2\int_0^\pi \sin^{m-2}\vphi\,d\vphi}
{\left\vert S^{m-1}\right\vert_{m-1}}\right)^{1/p}
=\left(\frac{\Gamma((m-1)/2)}{\pi^{(m-1)/2}}\right)^{1/p}.
\ea
Thus the second equality in \eqref{E4.1} follows from \eqref{E4.6}
and \eqref{E4.7}.
\end{proof}
In the next proposition, we reduce the multivariate sharp constant
for $\R^m$ to the univariate one for $\R^1$.

  \begin{proposition}\label{P4.2a}
  The following equalities hold true for $m\ge 1$:
  \bna\label{E4.1a}
   &&\NN_{0}\left(B_{\BB^m}\cap
   L_{p,\vert t\vert^{2\la}}\left(\R^m\right),
   L_{p,\vert t\vert^{2\la}}\left(\R^m\right)\right)\nonumber\\
  &=&\NN_{0}\left(B_{\BB^m}^{D(m)}\cap
   L_{p,\vert t\vert^{2\la}}\left(\R^m\right),
   L_{p,\vert t\vert^{2\la}}\left(\R^m\right)\right)\nonumber\\
  &=&C_{18}\,\NN_0\left(B_{1,e}\cap L_{p,\vert u\vert^{m+2\la-1}}(\R^1),
  L_{p,\vert u\vert^{m+2\la-1}}(\R^1)\right),
  \ena
  where
  \beq\label{E4.2a}
  C_{18}=C_{18}(m,p)
  :=\left(\frac{2}{\left\vert S^{m-1}
  \right\vert_{m-1}}\right)^{1/p}
  =\left(\frac{\Gamma(m/2)}{\pi^{m/2}}\right)^{1/p}.
  \eeq
  \end{proposition}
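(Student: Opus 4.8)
\emph{Proof proposal.} The plan is to mimic the proof of Proposition \ref{P4.2}: average over the full orthogonal group $D(m)$ to reduce to radial functions, then combine the structure of radial functions in $B_{\BB^m}$ with a passage to polar coordinates. Recall that $\|z\|_{\BB^m}^*=\sup_{t\in\BB^m}\vert(t,z)\vert$ for $z\in\CC^m$ and that $\vert t\vert^{2\la}$ is a radial weight.

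For the first equality, let $f\in B_{\BB^m}\cap L_{p,\vert t\vert^{2\la}}(\R^m)$. I would first check that $f\in L_\iy(\R^m)$: since the complement of $\{t\in\R^m:\vert t\vert\ge 1\}$ is a fixed ball, this set is $(L,\de)$-dense for suitable $L,\de$, so Kacnelson's theorem \cite[Theorem 3]{K1973} together with \eqref{E3.4n} gives $\|f\|_{L_\iy(\R^m)}\le C\|f\|_{L_{p,\vert t\vert^{2\la}}(\R^m)}$, exactly as in the proof of Lemma \ref{L3.2}. For $\rho\in D(m)$, acting on $\CC^m$ by the corresponding real orthogonal matrix, one has $\|\rho z\|_{\BB^m}^*=\sup_{t\in\BB^m}\vert(\rho^T t,z)\vert=\|z\|_{\BB^m}^*$ since $\rho^T\BB^m=\BB^m$, so $f(\rho\cdot)\in B_{\BB^m}$ with exponential-type constants independent of $\rho$. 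I would then set $f^*(t):=\int_{D(m)}f(\rho t)\,d\mu(\rho)$, with $\mu$ the normalized Haar measure on the compact group $D(m)$ (see \cite[Theorem 5.14]{R1973}). Then $f^*$ is entire (an integral of entire functions with uniform local bounds) and inherits the type estimate, so $f^*\in B_{\BB^m}$; it is $D(m)$-invariant by invariance of $\mu$, and $f^*(0)=f(0)$. Since the $L_{p,\vert t\vert^{2\la}}(\R^m)$-norm is rotation invariant, the generalized Minkowski inequality \cite[Lemma 3.2.15]{DS1988} gives $\|f^*\|_{L_{p,\vert t\vert^{2\la}}(\R^m)}\le\|f\|_{L_{p,\vert t\vert^{2\la}}(\R^m)}$. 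Hence the ratio $\vert f(0)\vert/\|f\|_{L_{p,\vert t\vert^{2\la}}(\R^m)}$ does not decrease when $f$ is replaced by $f^*$, which yields the first equality in \eqref{E4.1a}.

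For the second equality, the fact I would use is: $f\in B_{\BB^m}^{D(m)}$ if and only if $f(t)=g(\vert t\vert)$ for $t\in\R^m$ with $g\in B_{1,e}$ (standard; see \cite[Sect. 3.2.6]{N1969}, and cf. \cite[Lemma 4.2.12]{SW1971} for the polynomial analogue). For ``only if'' I would put $g(w):=f(w,0,\ldots,0)$; radiality applied to the reflection $t_1\mapsto-t_1\in D(m)$ forces $g$ even, $g\in B_1$ because $\|(w,0,\ldots,0)\|_{\BB^m}^*=\vert w\vert$, and $f(t)=f(\vert t\vert,0,\ldots,0)=g(\vert t\vert)$. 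For ``if'' I would write $g(w)=h(w^2)$ with $h$ entire, so that $f(t)=h(t_1^2+\cdots+t_m^2)$ extends to $\CC^m$, and the required bound $\vert f(z)\vert\le C_\vep\exp((1+\vep)\|z\|_{\BB^m}^*)$ follows from
\[
\left\vert z_1^2+\cdots+z_m^2\right\vert\le\left(\|z\|_{\BB^m}^*\right)^2,\qquad z\in\CC^m,
\]
which is Cauchy--Schwarz once $(\|z\|_{\BB^m}^*)^2$ is identified with the larger eigenvalue of the Gram matrix of $\RE z$ and $\IM z$. Granting this, for such $f$ one has $f(0)=g(0)$ and, by polar coordinates in $\R^m$ and evenness of $g$,
\ba
\|f\|^p_{L_{p,\vert t\vert^{2\la}}(\R^m)}
&=&\left\vert S^{m-1}\right\vert_{m-1}\int_0^\iy\vert g(r)\vert^p\,r^{m+2\la-1}\,dr\\
&=&\frac{\left\vert S^{m-1}\right\vert_{m-1}}{2}\,\|g\|^p_{L_{p,\vert u\vert^{m+2\la-1}}(\R^1)};
\ea
in particular $f\in L_{p,\vert t\vert^{2\la}}(\R^m)$ iff $g\in L_{p,\vert u\vert^{m+2\la-1}}(\R^1)$. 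Dividing $\vert f(0)\vert$ by $\|f\|_{L_{p,\vert t\vert^{2\la}}(\R^m)}$ and taking the supremum over the corresponding non-trivial functions then gives the second equality in \eqref{E4.1a}, with $C_{18}=(2/\vert S^{m-1}\vert_{m-1})^{1/p}=(\Gamma(m/2)/\pi^{m/2})^{1/p}$ as in \eqref{E4.2a}.

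The only non-routine step is the structure theorem for radial functions in $B_{\BB^m}$, and inside it the exponential-type part; but as indicated this reduces to the Cauchy--Schwarz estimate $\vert z_1^2+\cdots+z_m^2\vert\le(\|z\|_{\BB^m}^*)^2$, so I do not expect a genuine obstacle. (For $m=1$, $D(1)=\{\pm1\}$, the averaging is the symmetrization $f^*(u)=(f(u)+f(-u))/2$, and $C_{18}=1$, recovering a known univariate identity.)
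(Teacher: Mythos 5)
Your argument is correct, but it is not the route the paper takes: the paper proves Proposition \ref{P4.2a} by citation, invoking \cite[Corollary 3.11]{G2019} for $\la=0$ (itself based on the invariance theorem \cite[Theorem 2.2]{G2019} and several auxiliary propositions, and described as comparatively long) and observing that the argument carries over verbatim because the weighted norm $\|\cdot\|_{L_{p,\vert t\vert^{2\la}}(\R^m)}$ is rotation invariant. You instead give a self-contained proof that mirrors the paper's own treatment of the polynomial case (Proposition \ref{P4.2}): Haar averaging over $D(m)$ with the generalized Minkowski inequality for the first equality, and for the second equality a direct structure theorem identifying radial elements of $B_{\BB^m}$ with $g(\vert t\vert)$, $g\in B_{1,e}$, followed by polar coordinates. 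The genuinely nontrivial ingredient, which the paper outsources, is the ``if'' half of that structure theorem, and your reduction of it to the eigenvalue identity $\left(\|z\|_{\BB^m}^*\right)^2=\la_{\max}$ of the Gram matrix of $\RE z,\IM z$ plus Cauchy--Schwarz, giving $\vert z_1^2+\cdots+z_m^2\vert\le\left(\|z\|_{\BB^m}^*\right)^2$, is correct and pleasantly short; the only step you gloss is that from $g(w)=h(w^2)$ and the type-$1$ bound for $g$ one needs $\vert h(\zeta)\vert\le C_\vep\exp\left((1+\vep)\vert\zeta\vert^{1/2}\right)$, which is immediate since $h(\zeta)=g\left(\zeta^{1/2}\right)$ for either branch ($g$ being even), and then the displayed inequality finishes the type estimate. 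The trade-off: the paper's proof is two lines on the page but opaque without \cite{G2019}; yours handles the weight directly, stays within the tools already present in the paper (Kacnelson's theorem, Haar averaging, rotation invariance of radial-weighted norms), and makes the multivariate-to-univariate reduction explicit, at the cost of redoing work that the author preferred to cite. Note also that the standing assumption $p\in[1,\iy)$ of Section \ref{S4n} is what licenses your use of the generalized Minkowski inequality, and your separate remark for $m=1$ (where $D(1)=\{\pm1\}$ and $C_{18}=1$) correctly matches the symmetrization observation following \eqref{E1.6c}.
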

  \begin{proof}
  Equalities \eqref{E4.1a} for $\la=0$ were proved in
  \cite[Corollary 3.11]{G2019}. The proof was based on
  \cite[Theorem 2.2]{G2019} and several propositions
  from \cite{G2019}, and it
  was comparatively long. The proof of Proposition
  \ref{P4.2a} can be copied from the aforementioned one
  if we take into account the fact that the norm in
  $L_{p,\vert x\vert^{2\la}}\left(\R^m\right)$ is invariant
  under rotation.
  \end{proof}
Next, we reduce the multivariate and bivariate sharp constants in
\eqref{E4.1} to the univariate one.

\begin{proposition}\label{P4.3}
The following equality holds true for $m\ge 1$:
  \beq\label{E4.8}
   \NN_{x_0(m)}\left(\PP_{n,m},
  L_{p,(1-\vert x\vert^2)^{\la-1/2}}\left(\BB^m\right)\right)
  =C_{19}\,\NN_{1}\left(\PP_{n},
  L_{p,(1-u^2)^{m/2+\la-1}}([-1,1])\right),
  \eeq
  where
  \beq\label{E4.9}
  C_{19}=C_{19}(m,p,\la):=\left(\frac{\Gamma(\la+m/2)}
  {\pi^{(m-1)/2}\Gamma(\la+1/2)}\right)^{1/p}.
  \eeq
\end{proposition}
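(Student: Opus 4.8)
The plan is to dispose of $m=1$ by inspection and to reduce the case $m\ge 2$ to Proposition \ref{P4.2}. For $m=1$ the claim is immediate, since $\PP_{n,1}=\PP_n$, $\BB^1=[-1,1]$, $x_0(1)=1$, the exponent $m/2+\la-1$ equals $\la-1/2$, and $C_{19}(1,p,\la)=(\Gamma(\la+1/2)/(\pi^{0}\Gamma(\la+1/2)))^{1/p}=1$, so the two sides of \eqref{E4.8} coincide. For $m\ge 2$, Proposition \ref{P4.2} rewrites the left-hand side of \eqref{E4.8} as $C_{17}\,\NN_{x_0(2)}(\PP_{n,2,e},L_{p,(1-u^2-v^2)^{\la-1/2}|v|^{m-2}}(\BB^2))$, so it will suffice to prove
\[
\NN_{x_0(2)}\bigl(\PP_{n,2,e},L_{p,(1-u^2-v^2)^{\la-1/2}|v|^{m-2}}(\BB^2)\bigr)=D\,\NN_{1}\bigl(\PP_n,L_{p,(1-u^2)^{m/2+\la-1}}([-1,1])\bigr),
\]
where $D:=\bigl(\int_{-1}^1(1-s^2)^{\la-1/2}|s|^{m-2}\,ds\bigr)^{-1/p}=(\Gamma(\la+m/2)/(\Gamma(\la+1/2)\Gamma((m-1)/2)))^{1/p}$; one then checks $C_{17}D=C_{19}$ directly from \eqref{E4.2} and \eqref{E4.9}.

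The engine of the argument will be the substitution $v=s\sqrt{1-u^2}$, $s\in[-1,1]$, in the integral over $\BB^2$. Since $1-u^2-v^2=(1-u^2)(1-s^2)$, $|v|^{m-2}=(1-u^2)^{(m-2)/2}|s|^{m-2}$, and $dv=(1-u^2)^{1/2}\,ds$, the powers of $1-u^2$ will combine with total exponent $(\la-1/2)+(m-2)/2+1/2=m/2+\la-1$, and Fubini will give, for every $P_2\in\PP_{n,2,e}$,
\[
\|P_2\|^p_{L_{p,(1-u^2-v^2)^{\la-1/2}|v|^{m-2}}(\BB^2)}=\int_{-1}^1\!\int_{-1}^1 |P_2(u,s\sqrt{1-u^2})|^p\,(1-u^2)^{m/2+\la-1}(1-s^2)^{\la-1/2}|s|^{m-2}\,du\,ds.
\]

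For the bound $\le$ I would use that, since $P_2(u,v)=\sum_{i+2k\le n}c_{i,2k}u^iv^{2k}$ is even in $v$, for each fixed $s$ the function $q_s(u):=P_2(u,s\sqrt{1-u^2})=\sum_{i+2k\le n}c_{i,2k}s^{2k}u^i(1-u^2)^k$ is a polynomial in $u$ of degree $\le n$, hence $q_s\in\PP_n$, and $q_s(1)=P_2(1,0)=P_2(x_0(2))$. Then $|P_2(x_0(2))|\le\NN_1(\PP_n,L_{p,(1-u^2)^{m/2+\la-1}}([-1,1]))\,\|q_s\|_{L_{p,(1-u^2)^{m/2+\la-1}}([-1,1])}$; raising to the power $p$, integrating in $s$ against $(1-s^2)^{\la-1/2}|s|^{m-2}\,ds$, and invoking the display above will give $\NN_{x_0(2)}(\PP_{n,2,e},\,\cdot\,)\le D\,\NN_1(\PP_n,\,\cdot\,)$. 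For $\ge$, given $p\in\PP_n\setminus\{0\}$ I would take $P_2(u,v):=p(u)\in\PP_{n,2,e}$, for which $P_2(x_0(2))=p(1)$ and, again by the display above, $\|P_2\|_{L_{p,(1-u^2-v^2)^{\la-1/2}|v|^{m-2}}(\BB^2)}=D^{-1}\|p\|_{L_{p,(1-u^2)^{m/2+\la-1}}([-1,1])}$; taking the supremum over such $p$ gives $\NN_{x_0(2)}(\PP_{n,2,e},\,\cdot\,)\ge D\,\NN_1(\PP_n,\,\cdot\,)$. Combining the two bounds and multiplying by $C_{17}$ will yield \eqref{E4.8}.

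The one delicate point I anticipate is the verification that $q_s\in\PP_n$ for every $s$: it relies on the even-in-$v$ structure of elements of $\PP_{n,2,e}$ together with the fact that the substitution turns each $v^{2k}$ into $(1-u^2)^ks^{2k}$, so that $u^i(1-u^2)^k$ still has degree $i+2k\le n$ in $u$. Everything else is routine bookkeeping, in particular the evaluation of $D$ as $B((m-1)/2,\la+1/2)^{-1/p}$, which is where the precise value of the constant in \eqref{E4.9} gets pinned down; the standing hypotheses $\la\ge 0$ and $m\ge 2$ ensure convergence of that beta integral.
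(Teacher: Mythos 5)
Your proposal is correct and follows essentially the same route as the paper: for $m\ge 2$ it applies Proposition \ref{P4.2}, performs the substitution $v=s\sqrt{1-u^2}$ so that the powers of $1-u^2$ combine to $m/2+\la-1$, uses that $P_2(u,s\sqrt{1-u^2})$ is a polynomial of degree at most $n$ in $u$ with value $P_2(1,0)$ at $u=1$ for the upper bound, and tests with polynomials independent of $v$ for the lower bound, with the beta integral producing exactly the constant $C_{19}$. The explicit verification of the trivial case $m=1$ and of $C_{17}D=C_{19}$ matches the paper's bookkeeping.
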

\begin{proof}
Since \eqref{E4.8} is trivial for $m=1$, we assume that $m\ge 2$.
Then using Proposition \ref{P4.2} and making
the substitution $v=\tau\,\sqrt{1-u^2},\,\tau\in[-1,1]$,
we obtain
\bna\label{E4.10}
&&\NN_{x_0(m)}^p\left(\PP_{n,m},
  L_{p,(1-\vert x\vert^2)^{\la-1/2}}\left(\BB^m\right)\right)\nonumber\\
  &&=C_{17}^p\sup_{P_2\in\PP_{n,2,e}\setminus\{0\}}
 \frac{\left\vert P_2(1,0)\right\vert^p}
 {\int_{-1}^1\left(\int_{-\sqrt{1-u^2}}^{\sqrt{1-u^2} } \left\vert P_2\left(u,v
\right)\right\vert^p(1-u^2-v^2)^{\la-1/2}\vert v\vert^{m-2}dv\right)du}\nonumber\\
&&= C_{17}^p
\sup_{P_2\in\PP_{n,2,e}\setminus\{0\}}\nonumber\\
&&\frac{\left\vert P_2(1,0)\right\vert^p}
 {\int_{-1}^1\left(\int_{-1}^1 \left\vert P_2\left(u,\tau\,\sqrt{1-u^2}\right)
\right\vert^p(1-u^2)^{m/2+\la-1}du\right)\tau^{m-2}(1-\tau^2)^{\la-1/2}d\tau}.
\ena
Since  the function
\ba
Q_\tau(u):=P_2\left(u,\tau\,\sqrt{1-u^2}\right),\qquad
\tau\in[-1,1],
\quad P_2\in\PP_{n,2,e}\setminus\{0\},
\ea
 is a polynomial in $u$ of
degree at most $n$ and $Q_\tau(1)=P_2(1,0)$,
we have for each fixed $\tau\in [-1,1]$ and every fixed
$P_2\in\PP_{n,2,e}\setminus\{0\}$,
\bna\label{E4.11}
&&\int_{-1}^1 \left\vert P_2\left(u,\tau\,\sqrt{1-u^2}\right)
\right\vert^p(1-u^2)^{m/2+\la-1}du\nonumber\\
&&\ge \left\vert P_2(1,0)\right\vert^p
\inf_{Q\in\PP_{n}\setminus\{0\}}
\frac{\int_{-1}^1 \left\vert Q(u)\right\vert^p
(1-u^2)^{m/2+\la-1}du}
{\left\vert Q(1)\right\vert^p}.
\ena
Then combining \eqref{E4.10} and \eqref{E4.11}, we obtain
\bna\label{E4.12}
&&\NN_{x_0(m)}^p\left(\PP_{n,m},
  L_{p,(1-\vert x\vert^2)^{\la-1/2}}\left(\BB^m\right)\right)\nonumber\\
 && \le C_{17}^p
  \left(\int_{-1}^1 \tau^{m-2}(1-\tau^2)^{\la-1/2}d\tau\right)^{-1}
  \NN_{1}^p\left(\PP_{n},
  L_{p,(1-u^2)^{m/2+\la-1}}([-1,1])\right)\nonumber\\
  &&= C_{19}^p \NN_{1}^p\left(\PP_{n},
  L_{p,(1-u^2)^{m/2+\la-1}}([-1,1])\right),
  \ena
  where $C_{17}$ and $C_{19}$ are defined by
  \eqref{E4.2} and \eqref{E4.9}, respectively.
  On the other hand, repeating calculations \eqref{E4.10}
  for polynomials $P_2(u,v)=Q(u)$ that are independent of $v$,
  we have
\bna\label{E4.13}
&&\NN_{x_0(m)}^p\left(\PP_{n,m},
  L_{p,(1-\vert x\vert^2)^{\la-1/2}}\left(\BB^m\right)\right)\nonumber\\
  &&\ge C_{17}^p\sup_{Q\in\PP_{n}\setminus\{0\}}
 \frac{\left\vert Q(1)\right\vert^p}
 {\int_{-1}^1\left(\int_{-\sqrt{1-u^2}}^{\sqrt{1-u^2} } \left\vert Q(u)
 \right\vert^p(1-u^2-v^2)^{\la-1/2}\vert v\vert^{m-2}dv\right)du}\nonumber\\
 &&=C_{19}^p \NN_{1}^p\left(\PP_{n},
  L_{p,(1-u^2)^{m/2+\la-1}}([-1,1])\right).
 \ena
 Thus \eqref{E4.8} follows from \eqref{E4.12} and \eqref{E4.13}.
 \end{proof}
 \noindent
 \textit{Proof of Theorem \ref{T1.3}.}
 Recall that the constants $A_1$ and $A_2$ are defined by
 \eqref{E1.10a} and \eqref{E1.10ab}, respectively.

 Next, by Lemma \ref{L2.9} for $\la=0$ and
 Lemma \ref{L2.11}  for $\la>0$,
 there exists an extremal polynomial $P^*$
  for $\NN\left(\PP_{n,m},
  L_{p,(1-\vert x\vert^2)^{\la-1/2}}\left(\BB^m\right)\right),
  \,p\in[1,\iy),\,\la\ge 0,$
 and there exists
 $x_0\in S^{m-1}$ such that
 $\|P^*\|_{L_\iy(\BB^m)}=\vert P^*(x_0)\vert$.

 Without loss of generality we can assume that
 $x_0=x_0(m)=(1,\,0,\ldots,0)$ for $m\ge 2$.
 Indeed, if $x_0\neq x_0(m)$, then
 there exists the rotation $\rho_0\in D(m)$ such that
 $\rho_0 x_0=x_0(m)$. Then the polynomial
 $P^{**}(x):=P^{*}(\rho_0 x)$ is an extremal polynomial
  for $\NN\left(\PP_{n,m},
  L_{p,(1-\vert x\vert^2)^{\la-1/2}}\left(\BB^m\right)\right)$ and
  $\|P^{**}\|_{L_\iy(\BB^m)}=\vert P^{**}(1,\,0,\ldots,0)\vert$.

  Therefore, for $p\in[1,\iy),\,m\ge 1$, and $\la\ge 0,$
  \beq\label{E4.14}
  \NN\left(\PP_{n,m},
  L_{p,(1-\vert x\vert^2)^{\la-1/2}}\left(\BB^m\right)\right)
  =\NN_{x_0(m)}\left(\PP_{n,m},
  L_{p,(1-\vert x\vert^2)^{\la-1/2}}\left(\BB^m\right)\right).
  \eeq
  Note that for $m=1$ \eqref{E4.14} follows from \eqref{E1.6d}.
  Thus the first relation in \eqref{E1.10} of Theorem \ref{T1.3}
  with $A_1=2^{1/p}C_{19}$ in \eqref{E1.10a},
  where $C_{19}$ is defined by \eqref{E4.9},
  follows from equalities \eqref{E4.14} and \eqref{E4.8}
  and limit relation \eqref{E1.6c}.
  The second equality in \eqref{E1.10} of Theorem \ref{T1.3}
  with $A_2=A_1/C_{18}$ in \eqref{E1.10ab},
  where $C_{18}$ is defined by \eqref{E4.2a},
  is an immediate consequence of Proposition \ref{P4.2a}.

  Finally, to prove \eqref{E1.10b}, we choose
  $f_0(t):=g_{0}(\vert t\vert)$, where
  $g_{0}
  \in  \left(B_{1,e}\cap L_{p,\vert u
 \vert^{m+2\la-1}}(\R^1)\right)\setminus\{0\}$ and
 \beq \label{E4.15}
  \NN_0\left(B_{1,e}\cap L_{p,\vert u\vert^{2\la}}(\R^1),
  L_{p,\vert u\vert^{m+2\la-1}}(\R^1)\right)
= \vert g_{0}(0)\vert/\|g_{0}
\|_{ L_{p,\vert u\vert^{m+2\la-1}}(\R^1)}.
 \eeq
 The existence of $g_0$ was proved in
 \cite[Theorem 4.3]{G2019} (see also relations \eqref{E1.6c}
  and \eqref{E1.6ca}
 with $\la\ge 0$, replaced by  $\la+(m-1)/2$).
 Then $f_{0}
  \in  \left(B_{\BB^m}\cap L_{p,\vert t
 \vert^{2\la}}(\R^m)\right)\setminus\{0\}$
 and by \eqref{E4.15} and by
  Proposition \ref{P4.2a},
 \beq\label{E4.16}
 \NN_0\left(B_{\BB^m}\cap L_{p,\vert t\vert^{2\la}}(\R^m),
  L_{p,\vert t\vert^{2\la}}(\R^m)\right)
  =\vert f_0(0)\vert/\|f_0\|_{ L_{p,\vert t\vert^{2\la}}(\R^m)}.
  \eeq
  Thus \eqref{E1.9a} follows from \eqref{E1.9} and \eqref{E4.16}.
  This completes the proof of the theorem.
  \hfill$\Box$\vspace{.12in}\\
  \noindent
 \textit{Proof of Theorem \ref{T1.3a}.}
 Using Proposition \ref{P4.3} and formula \eqref{E1.6ba}
 with $\la$ replaced by $\la+(m-1)/2$, we arrive at
 \eqref{E1.10c}.
 Next, the asymptotic
  \bna\label{E4.17}
  &&\NN\left(\PP_{n,m},
  L_{2,\left(1-\vert x\vert^2\right)^{\la-1/2}}
  \left(\BB^m\right)\right)\nonumber\\
  &&=\frac{n^{\la+m/2}(1+o(1))}
  {\left(2^{2\la+m-2}\pi^{(m-1)/2}(2\la+m)^{1/2}
  \Gamma(\la+1/2)\Gamma(\la+m/2)\right)^{1/2}},\qquad n\to\iy,
  \ena
  immediately follows from \eqref{E1.10c}.
  Therefore, formula \eqref{E1.10d} is a direct consequence of
  \eqref{E1.10} and \eqref{E4.17}.
  \hfill$\Box$\vspace{.12in}\\
  \textbf{Acknowledgements.} We are grateful to both anonymous referees
 for valuable suggestions.

\end{document}